\definecolor{shadecolor}{gray}{0.9}
\theoremstyle{plain}  
\newtheorem{thm}{Theorem}[section] 
\newtheorem{lem}[thm]{Lemma} 
\newtheorem{prop}[thm]{Proposition} 
\newtheorem{cor}[thm]{Corollary} 
\theoremstyle{definition} 
\newtheorem{defn}[thm]{Definition}
\newtheorem{exmp}[thm]{Example}
\newtheorem{rem}[thm]{Remark}
\newtheoremstyle{assumption}
{3pt}
{3pt}
{}
{}
{\bf}
{.}
{.5em}
{\thmname{#1} (\thmnote{#3}\thmnumber{#2})}
\theoremstyle{assumption}
\theoremstyle{remark}
\newcommand{\diff}{\mathrm{d}}
\newcommand{\dint}{\,\mathrm{d}}
\newcommand{\E}{\mathbf{E}}
\newcommand{\eps}{\varepsilon}
\renewcommand{\l}{\lambda}
\DeclareMathOperator*{\essinf}{ess\,inf}
\DeclareMathOperator*{\esssup}{ess\,sup}
\newcommand{\F}{\mathcal{F}}
\newcommand{\R}{\mathbb{R}}
\newcommand{\A}{\mathsf{A}}
\renewcommand{\O}{\mathsf{O}}
\newcommand{\one}{\mathds{1}}
\newcommand{\interior}{\operatorname{int}}
\DeclareMathOperator*{\argmin}{arg\,min}
\newcommand{\N}{\mathbb N}
\renewcommand{\a}{\alpha}
\newcommand{\M}{\mathcal M}
\newcommand{\T}{T}
\newcommand{\bY}{\mathbf Y}
\newcommand{\As}{{\mathsf{A}_{\text{\rm sel}}}}
\newcommand{\Ae}{{\mathsf{A}_{\text{\rm exh}}}}
\newcommand{\Ss}{S_{\text{\rm sel}}}
\newcommand{\Se}{S_{\text{\rm exh}}}
\newcommand{\Vs}{V_{\text{\rm sel}}}
\newcommand{\cl}{\mathrm{cl}}
\renewcommand{\S}{\mathcal S}
\newcommand{\bP}{\mathbf{P}}
\newcommand{\I}{\mathcal I}
\newcommand{\C}{\mathcal C}
\DeclareMathOperator*{\graph}{graph}
\DeclareMathOperator*{\epi}{epi}
\renewcommand{\rm}{\normalfont \rmfamily}
\renewcommand{\bf}{\normalfont \bfseries}
\def\be{\begin{equation} \label}
\def\ee{\end{equation}}
\numberwithin{equation}{section}
\newcommand{\Comments}{1}
\newcommand{\mynote}[2]{\ifnum\Comments=1\textcolor{#1}{#2}\fi}
\newcommand{\mytodo}[2]{\ifnum\Comments=1
  \todo[linecolor=#1!80!black,backgroundcolor=#1,bordercolor=#1!80!black]{#2}\fi}
\newcommand{\tobi}[1]{\mytodo{blue!20!white}{TF: #1}}
\begin{document}

\title{Forecast Evaluation of Quantiles, Prediction Intervals, and other Set-Valued Functionals}
\author{Tobias Fissler\thanks{Vienna University of Economics and Business, Institute for Statistics and Mathematics, Welthandels\-platz 1, 1020 Vienna, Austria, 
\texttt{tobias.fissler@wu.ac.at},
\texttt{jana.hlavinova@wu.ac.at} and \texttt{birgit.rudloff@wu.ac.at}}
\and Rafael Frongillo\thanks{University of Colorado Boulder, Department of Computer Science, 430 UCB, 1111 Engineering Dr, Boulder, CO 80309, \texttt{raf@colorado.edu}}
\and Jana Hlavinov\'{a}\footnotemark[1]
\and Birgit Rudloff\footnotemark[1]
}
\maketitle

\begin{abstract}
\noindent
\textbf{Abstract.}
We introduce a theoretical framework of elicitability and identifiability of set-valued functionals, such as quantiles, prediction intervals, and systemic risk measures.
A functional is elicitable if it is the unique minimiser of an expected scoring function, and identifiable if it is the unique zero of an expected identification function; both notions are essential for forecast ranking and validation, and $M$- and $Z$-estimation. 
Our framework distinguishes between \emph{exhaustive} forecasts, being set-valued and aiming at correctly specifying the entire functional, and \emph{selective} forecasts, content with solely specifying a single point in the correct functional.  
We establish a mutual exclusivity result: A set-valued functional can be either selectively elicitable or exhaustively elicitable or not elicitable at all. 
Notably, since quantiles are well known to be selectively elicitable, they fail to be exhaustively elicitable. 
We further show that the class of prediction intervals and Vorob'ev quantiles turn out to be exhaustively elicitable and selectively identifiable.
In particular, we provide a mixture representation of elementary exhaustive scores, leading the way to Murphy diagrams.
We give possibility and impossibility results for the shortest prediction interval and prediction intervals specified by an endpoint or a midpoint.
We end with a comprehensive literature review on common practice in forecast evaluation of set-valued functionals.
\end{abstract}

\noindent
\textit{Keywords:}
Consistency; 
Convex level sets;
Elicitability; 
Identifiability; 
$M$-estimation;
Random sets;
Vorob'ev quantiles

\noindent
\textit{MSC 2020 Subject Classification: } 62C05; 62F07; 62H11; 91B06

\section{Introduction}

Humanity faces the need to make decisions despite uncertainty.
This uncertainty has many sources, one of which stems from unknown or random future events.
For example, in agriculture the right time for harvesting depends on the weather in the forthcoming days; in business, an investment decision in a production facility depends on future demand; in politics, decisions for travel restrictions or lockdowns heavily depend on the anticipated future development of a pandemic.
Predicting uncertain future events is therefore urgent and ubiquitous, dating back at least to the ancient Delphic Oracle and spanning the time up to today's sophisticated quantitative epidemiological models.

The presence of various sorts of forecasts and humanity's reliance on them calls for a careful assessment and evaluation, basically focusing on two complementary aspects:
First, are given forecasts good or reliable in absolute terms?
And second, how well have certain forecasts performed in comparison to some alternative predictions, assessing their relative quality? 
Clearly, these questions can only be answered \emph{ex post}, given observations of the future events in question.
Then, the reliability or calibration can be assessed in terms of \emph{moment} or \emph{identification functions}.
Forecast comparison and ranking, in turn, is commonly performed in terms of loss or scoring functions.
To perform forecast evaluation properly, one must specify a certain quality criterion or directive the forecasts should follow. This directive might be given indirectly in terms of a cost, loss, or score, such that good forecasts aim to minimise this criterion (in expectation).
This directive can also be formulated directly, such as the whole probability distribution of the uncertain event, capturing the entire inherent uncertainty, or as a summary measure thereof, called \emph{functional}, such as the mean, variance, or a certain risk measure.
When the directive is specified in the latter way,
it is crucial that the tools of forecast evaluation, chiefly scoring and identification functions, are in line with this directive.
This alignment leads to the notions of strictly consistent scores, which are minimised in expectation by the correctly specified forecasts, and strict identification functions, whose roots in expectation are the correctly specified forecasts. 

The literature on forecast evaluation has mainly focused on single-valued functionals $T$, such as real-valued and vector-valued point forecasts or probabilistic forecasts, where a single correct value is specified for each possible distribution (for technical definitions and an account of the literature, we refer to Section \ref{subsec:consistent scoring functions}).
Yet set-valued functionals are abound; prominent examples include quantiles,\footnote{This coincides with option (ii) of the interesting discussion provided in \citet[p.\ 170]{Mizera2010}.} the mode, and prediction intervals, which may all be non-unique and therefore set-valued.
Applications bring many other examples, such as the set-valued systemic risk measures introduced in \cite{FeinsteinRudloffWeber2017} which specify the entire set of capital allocations adequate to render a financial system's risk acceptable.
Set-valued functionals also naturally arise via expectations or quantiles of random sets \citep{Molchanov2017}, such as from climatology and meteorology (the area affected by a flood), reliability engineering (parts of a machine being affected by extreme heat) or medicine (tumorous tissue in the human body); cf.\ \cite{BolinLindgren2015}.
We discuss these applications and several others in Section~\ref{subsec:literature} with a special focus on quantiles of random sets in Section~\ref{sec:Vorob'ev Quantiles}.

Techniques developed to evaluate single-valued forecasts do not in general suffice for set-valued functionals.
For functionals like the mode or the $\alpha$-quantile, it is common to restrict to the set of distributions with a unique mode or $\alpha$-quantile~\citep{Heinrich2014, FisslerZiegel2019}, yet one may be interested in distributions with multiple modes or quantiles.
Moreover, many functionals, such as quantiles of random sets, are inherently set-valued and such a restriction is not available, or too much of a simplification.
We therefore need a comprehensive theoretical framework for the assessment of forecasts for set-valued functionals.

It turns out that already for the definition of elicitation (or identification) of a set-valued functional, there are several possibilities, depending on the form the forecasts take.
One may ask for an arbitrary element of the functional, an arbitrary subset, or perhaps even the entire set itself.
For the case of the $\alpha$-prediction interval, and the uniform distribution on $[0,1]$, these definitions correspond to any subinterval of $[0,1]$ of length at least $\alpha$, any set of such intervals, or the entire set of all such intervals.
Moreover, if a set-valued functional is elicitable in one of these corresponding senses, does it continue to be elicitable if one specifies a particular element of the functional to be elicited, such as the shortest $\alpha$-prediction interval; or if it is not elicitable in one of the senses above, could such a specification render it elicitable? 

In this paper, we present a general theoretical framework to evaluate the forecasts of set-valued functionals, which clarifies and expands upon these questions.
We begin with a thorough definition of elicitability and identifiability of set-valued functionals (Section~\ref{sec:theory}).
In particular, as alluded to in the above questions, we define two types of set-valued elicitation (identification):
for the \emph{selective} type, we follow \cite{LambertShoham2009} and \cite{Gneiting2011} where a single-valued forecast must be among the set of correct values specified by the functional---as it is also typical for quantiles \citep{Koenker2005}, whereas the \emph{exhaustive} type more ambitiously asks one to forecast the entire set of correct values, and requires this set to be the unique minimiser (zero) of the expected score (identification function).

The main result of this article, Theorem~\ref{thm:exclusivity}, states that the two types of elicitability are mutually exclusive: a set-valued functional is either selectively elicitable or exhaustively elicitable, or not elicitable at all, subject to mild regularity conditions.
The proof follows from a refinement of the classical result that CxLS are necessary for elicitability (Proposition~\ref{prop:CxLS}).
This mutual exclusivity result is powerful in its ability to rule out elicitability of one type or the other; for example, quantiles of random variables are known to be selectively elicitable, thus failing to be exhaustively elicitable.
Interestingly, any specification of the quantile,
such as the lower quantile, or Value at Risk in the risk management literature, is also not elicitable in general; see Proposition~\ref{prop:selections b} and the discussion thereafter.

We illustrate our framework with new results for prediction intervals (Section~\ref{sec:PIs}) and Vorob'ev quantiles (Section~\ref{sec:Vorob'ev Quantiles}).
For prediction intervals, we show that 
the whole class of $\a$-prediction intervals is exhaustively elicitable. While this immediately rules out the selective elicitability, we consider certain interesting specifications of prediction intervals. If the midpoint or an endpoint is given by a constant, this specification is elicitable, indeed. However, if the midpoint or an endpoint is given via a general identifiable functional, it is in general not elicitable, unless the endpoint is specified via a quantile. We also show that the shortest prediction interval is not elicitable in either sense. This section complements and generalises recent results established independently in \cite{BrehmerGneiting2020}.
We then establish the exhaustive elicitability and selective identifiability of Vorob'ev quantiles of random closed sets.
For an application to systemic risk measures, we refer the reader to \cite{FisslerHlavinovaRudloff_RM}, where the theoretical framework of this paper has been applied to establish selective identifiability, exhaustive elicitability, and mixture representations of exhaustive scoring functions, leading to Murphy diagrams.
We close with a comprehensive literature review 
on forecast evaluation for set-valued quantities, covering spatial statistics, machine learning, engineering, climatology and meteorology, and philosophy, leaving many interesting avenues for future research. Technical proofs and further interesting results are deferred to the Appendix.

\section{Two types of elicitability and identifiability}\label{sec:theory}

\subsection{Scoring and identification functions for single-valued functionals}
\label{subsec:consistent scoring functions}
We use the decision-theoretic framework described for example in \cite{Gneiting2011}; cf.\ \cite{Savage1971}, \cite{Osband1985}, \cite{LambertETAL2008}, \cite{FisslerZiegel2016, FisslerZiegel2019}. 
Let $(\Omega, \mathfrak F, \bP)$ be some complete, atomless probability space rich enough to accommodate all random elements mentioned in the sequel.
With $Y$ we denote an observation of interest, taking values 
in some measurable space $(\O, \mathcal O)$, called \emph{observation domain}. 
Forecasts for $Y$ are denoted by $X$, taking values in a measurable space $(\A, \mathcal A)$ called \emph{action domain}. 
We assume that the directive for an ideal forecast is given in terms 
of a statistical functional of the (conditional) distribution $F$ of $Y$ (given $X$).
Mathematically, this is a map $T\colon\M\to\A$, where $\M$ is some class of probability measures or probability distribution functions on $(\O, \mathcal O)$.
All functions are tacitly assumed to be measurable. 
 A scoring function is a map $S\colon\A\times\O\to\R^* := (-\infty,\infty]$. It is negatively oriented, meaning that a forecast $x\in\A$ receives the penalty $S(x,y)$ if $y\in\O$ materialises. 
Statistically, the relative quality of a prediction observation sequence $(X_t, Y_t)$, $t=1, \ldots, N$, is evaluated by $S$ in terms of the \emph{realised score}
\be{eq:realised score}
\frac{1}{N}\sum_{t=1}^N S(X_t,Y_t).
\ee
Invoking an expected utility maximisation argument or a suitable law of large numbers, 
it has been widely argued \citep{MurphyDaan1985, EngelbergManskiETAL2009} that a scoring function should incentivise truthful forecasts in that the Bayes-act coincides with the given directive. 
We say that $S$ is incentive compatible or  \emph{$\M$-consistent} for $T$ if 
\be{eq:consistency 1}
\bar S(T(F),F)  \le \bar S(x,F)
\ee
for all $x\in \A$ and $F\in\M$ where we implicitly assume that $\bar S(x,F):=\int S(x,y) \diff F(y)$ exists. If additionally equality in \eqref{eq:consistency 1} implies that $x=T(F)$, $S$ is \emph{strictly }$\M$-consistent for $T$. 
A functional that admits a strictly consistent scoring function is called \emph{elicitable} \citep{Osband1985, LambertETAL2008}.
As such, the elicitability of a functional opens the way to meaningful forecast comparison \citep{Gneiting2011} which is closely related to comparative backtests in finance \citep{FisslerETAL2016, NoldeZiegel2017}. Similarly, it is crucial for $M$-estimation \citep{Huber1967, HuberRonchetti2009} and regression, such as quantile regression \citep{KoenkerBasset1978, Koenker2005} or expectile regression \citep{NeweyPowell1987}.

While scoring functions serve the purpose of forecast comparison and ranking, we employ \emph{identification functions} when it comes to forecast \emph{validation}.
Similarly to a scoring function, an identification function is a map $V\colon\A\times \O\to\R^k$ where we again make the tacit assumption that $\bar V(x,F) := \int V(x,y)\diff F(y)$ exists for all $x\in\A$, $F\in\M$ with the additional assumption that the expectation be finite.
$V$ is an \emph{$\M$-identification function} for $T$ if $\bar V(T(F),F)=0$. It is a \emph{strict} $\M$-identification function for $T$ if additionally $\bar V(x,F)=0$ implies that $x=T(F)$. 
In the literature on point-valued functionals, it turned out to be appropriate that $k$ coincides with the dimension of the forecasts \citep{Osband1985, FrongilloKash2014b, FisslerZiegel2016}.
Since statistical practice demands to evaluate the \emph{realised identification function}, which is the counterpart of \eqref{eq:realised score} upon replacing $S$ by $V$, $V$ simply needs to map to a real vector space.
One can even be more flexible and use an infinite-dimensional space. E.g.\ in Proposition \ref{prop:sel id} the identification function maps to $\R^{[-1,1]}$, the space of functions from $[-1,1]$ to $\R$. 
One can also relax the requirement that the expected identification function attains a 0 at the correctly specified forecast. It can rather attain some predefined particular value(s)---the important requirement being that this value be \emph{identifiable} in the common sense.

In statistics and econometrics, identification functions are often called \emph{moment functions} and give rise to the \emph{(generalised) method of moments} \citep{NeweyMcFadden1994} or $Z$-estimation. For a discussion of identifiability and calibration in the context of backtesting risk measures, we refer the reader to the insightful papers \cite{Davis2016} and \cite{NoldeZiegel2017}.
For a recent general perspective on identification, see \cite{BasseBojinov2020}.

\subsection{Selective and exhaustive scoring and identification functions}

When $T$ is set-valued, we may write \(
\T\colon \M \to 2^W,
\)
where $W$ is some generic space. 
As mentioned in the Introduction, we distinguish two types of forecasts with the corresponding notions of scoring and identification functions. In decision-theoretic terms, this translates into 
two sensible choices for the action domain $\A$:
\begin{enumerate}[(i)]
\item
$\A = \As \subseteq W$: The elements of the action domain $\As$ representing possible forecasts are \emph{points} in the space $W$. Truthful reporting means that there are generally \emph{multiple} best actions, namely all \emph{selections} $t\in\T(F)\subseteq \As$ for $F\in\M$. Mnemonically, we shall refer to $\As$ as a \emph{selective} action domain.
\item
$\A= \Ae \subseteq 2^W$: The elements of the action domain $\Ae$ representing possible forecasts are \emph{subsets} of the space $W$. Truthful reporting means that there is a \emph{unique} best action, namely the \emph{exhaustive} functional $\T(F)\in\Ae$ for $F\in\M$. Similarly, we shall refer to $\Ae$ as an \emph{exhaustive} action domain.
\end{enumerate}
The two different choices of action domains lay claim to different levels of precision and ambition of the forecasts. 
For a certain functional $T\colon\M\to 2^W$, the connection between the choice of the selective action domain $\As\subseteq W$ and the exhaustive action domain $\Ae \subseteq 2^W$ will be specified if needed for a certain result, otherwise remaining unspecified. However, a sensible connection between the two choices we have in mind is 
\(
\As = \bigcup_{B\in\Ae} B\,.
\)

We continue to use the dichotomy introduced above also for scoring functions evaluating forecasts for some set-valued functional $\T\colon \M \to 2^W$. Let $\M'\subseteq\M$ be some generic subset.

\begin{defn}[Consistency, elicitability]
\begin{enumerate}[\rm (i)]
\item

A \emph{selective} scoring function $S_{\text{\rm sel}}\colon\As\times \O\to\R^*$ is $\M'$-\emph{consistent} for $T\colon\M\to 2^{\As}$ if
\be{eq:weak cons}
\bar S_{\text{\rm sel}} (t, F) \le \bar S_{\text{\rm sel}} (x,F) \qquad \forall x\in\As, \ \forall t\in T(F), \ \forall F\in\M'.
\ee
The selective score $\Ss$ is \emph{strictly $\M'$-consistent} for $T$ if it is $\M'$-consistent for $T$ and if equality in \eqref{eq:weak cons} implies that $x\in T(F)$.
$T$ is \emph{selectively elicitable} on $\M'$ if there is a strictly $\M'$-consistent selective scoring function for $T$.

\item
An \emph{exhaustive} scoring function $S_{\text{\rm exh}}\colon\Ae\times \O\to\R^*$ is $\M'$-\emph{consistent} for $\T\colon\M\to \Ae$ if
\be{eq:strong cons}
\bar S_{\text{\rm exh}}(T(F), F) \le \bar S_{\text{\rm exh}}(B,F) \qquad \forall B\in\Ae, \ \forall F\in\M'.
\ee
The exhaustive score $\Se$ is \emph{strictly $\M'$-consistent} for $T$ if it is $\M'$-consistent for $T$ and if equality in \eqref{eq:strong cons} implies that $B=T(F)$.
$T$ is \emph{exhaustively elicitable} on $\M'$ if there is a strictly $\M'$-consistent exhaustive scoring function for $T$.
\end{enumerate}
Unless mentioned explicitly otherwise, we tacitly assume that all scoring functions are $\M'$-finite in the sense that $ \bar S_{\text{\rm sel}} (x,F), \bar S_{\text{\rm exh}}(B,F)<\infty$ for all $x\in \As$, $B\in \Ae$, $F\in\M'$. 
\end{defn}
If we merely say that $T\colon\M\to2^W$ is (selectively or exhaustively) elicitable, we mean it is (selectively or exhaustively) elicitable on $\M$.
Assuming $\M$-finiteness is convenient in many proofs and is a standard assumption in the literature \citep{Ziegel2016, FisslerZiegel2016, BrehmerStrokorb2019, WangWei2020}.
Note that without stipulating $\M$-finiteness, the \emph{strict} $\M$-consistency of a selective (exhaustive) scoring function $S_{\text{sel}}$ ($ S_{\text{exh}}$) implies that $\bar S_{\text{sel}}(t,F) \in\R$ for all $F\in\M$, $t\in T(F)$ ($\bar S_{\text{exh}}(T(F),F) \in\R$ for all $F\in\M$).
According to \cite{GneitingRaftery2007} we call any two (selective or exhaustive) scoring functions $S, S'\colon\A\times\O\to\R$ equivalent if there is some $\lambda>0$ and some function $a\colon\O\to\R$ such that $S'(x,y)=\lambda S(x,y)+a(y)$. It is immediate to see that this equivalence relation preserves $\M$-consistency, and also strict $\M$-consistency, subject to $a$ being $\M$-integrable.
If there is no risk of confusion, we shall drop the indices ``sel'' and ``exh'' to indicate the difference between selective and exhaustive interpretations, respectively.

For identification functions, we again make the distinction between selective and exhaustive identification functions to allow for a rigorous treatment of set-valued functionals. 
\begin{defn}[Identification function, identifiability]\label{defn:identification functions}
\begin{enumerate}[\rm (i)]
\item
A map $\Vs\colon\As\times \O\to\R^k$ is a \emph{selective $\M'$-identification function} for $\T\colon\M \to2^{\As}$ if $\bar V_{\text{\rm sel}}(t,F)=0$ for all $t\in\T(F)$ and for all $F\in\M'$. Moreover, $V_{\text{\rm sel}}$ is a strict selective $\M'$-identification function for $\T$ if 
\be{eq:V selective def}
\bar V_{\text{\rm sel}}(x,F) = 0 \quad \Longleftrightarrow \quad  x\in\T(F), \qquad \forall x\in\As, \quad \forall  F\in\M'.
\ee
$T$ is \emph{selectively identifiably} on $\M'$ if it possesses a strict selective $\M'$-identification function.
\item
A map $V_{\text{\rm exh}}\colon\Ae\times \O\to\R^k$ is an \emph{exhaustive $\M'$-identification function} for $\T\colon\M \to\Ae$ if $\bar V_{\text{\rm exh}}(\T(F),F)=0$ for all $F\in\M'$. Moreover, $V_{\text{\rm exh}}$ is a strict exhaustive $\M'$-identification function for $\T$ if 
\begin{equation}\label{eq:V exhaustive def}
\bar V_{\text{\rm exh}}(B,F) = 0 \quad \Longleftrightarrow \quad  B=\T(F), \qquad \forall B\in\Ae, \quad \forall F\in\M'.
\end{equation}
$T$ is \emph{exhaustively identifiably} on $\M'$ if it possesses a strict exhaustive $\M'$-identification function.
\end{enumerate}
\end{defn}

Again, we say that $T\colon\M\to2^W$ is (selectively or exhaustively) identifiable, if we mean it is (selectively or exhaustively) identifiable on $\M$.\\

For single-valued functionals such as the mean, the distinction between selective and exhaustive elicitability is obsolete, since any choice of an action domain leads to a unique best action. Hence, one is actually always in the exhaustive setting, and there is no point in mentioning this fact explicitly. Of course, we could formally identify a point-valued functional $T\colon\M\to\A$ with the set-valued functional $T'\colon\M\to \A' = \{\{a\}\,|\,a\in\A\}$ where $T'(F) = \{T(F)\}$. Then the following lemma holds.

\begin{lem}\label{lem:basic}
Let $T\colon\M\to\A$ be some point-valued functional. Define the set-valued functional $T'(F):= \{T(F)\}$, $F\in\M$. Then $T'$, considered as a map to the power set $2^\A$, is selectively elicitable (identifiable) if and only if $T'$, considered as a map to the exhaustive action domain $\A' = \{\{a\}\,|\,a\in\A\}$, is exhaustively elicitable (identifiable).
Moreover, the selective elicitability (identifiability) of $T'\colon \M\to 2^\A$ is equivalent to the elicitability (identifiability) of $T$.
\end{lem}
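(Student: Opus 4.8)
The plan is to prove the two equivalences essentially by unwinding the definitions and exhibiting a trivial dictionary between points $a\in\A$ and singletons $\{a\}\in\A'$. The key observation is that the map $\iota\colon\A\to\A'$, $a\mapsto\{a\}$, is a bijection, and that under $T'(F)=\{T(F)\}$ we have, for a point $a\in\A$, the equivalence $\{a\}\in 2^\A$ being a ``selection of $T'(F)$'' (i.e.\ $\{a\}\subseteq\ldots$ — careful: selections of $T'$ viewed in $2^\A$ are the \emph{elements} of $T'(F)$, namely the single point $T(F)$) iff $a=T(F)$ iff $\{a\}=T'(F)$. So both the selective notion (for $T'\colon\M\to 2^\A$) and the exhaustive notion (for $T'\colon\M\to\A'$) are saying ``the correctly specified report is the unique point/singleton $T(F)$''; only the encoding of the report differs.

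First I would treat the scoring-function (elicitability) half. Given a strictly $\M$-consistent selective scoring function $S_{\text{sel}}\colon\A\times\O\to\R^*$ for $T'\colon\M\to 2^\A$ — here the selective action domain is $\As=\A$ — I define $S_{\text{exh}}\colon\A'\times\O\to\R^*$ by $S_{\text{exh}}(\{a\},y):=S_{\text{sel}}(a,y)$. Since $T'(F)$ as a selective functional has the single selection $T(F)$, the condition \eqref{eq:weak cons} reads $\bar S_{\text{sel}}(T(F),F)\le\bar S_{\text{sel}}(x,F)$ for all $x\in\A$, with equality only if $x\in T'(F)=\{T(F)\}$, i.e.\ $x=T(F)$; this is verbatim \eqref{eq:strong cons} and its strictness clause for $S_{\text{exh}}$ after substituting $B=\{x\}$ and using that every $B\in\A'$ is of this form. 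The reverse direction reverses this substitution. The same construction with $S_{\text{sel}}(a,y):=S(a,y)$ shows that strict $\M$-consistency of a selective scoring function for $T'\colon\M\to 2^\A$ is literally the same statement as strict $\M$-consistency of an (ordinary) scoring function for the point-valued $T$, because \eqref{eq:weak cons} for $T'$ with its unique selection $T(F)$ is exactly \eqref{eq:consistency 1}–strictness for $T$. This gives both equivalences for elicitability.

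Then I would repeat the identical argument for identification functions, which is even more immediate: $V_{\text{exh}}(\{a\},y):=V_{\text{sel}}(a,y)$ turns \eqref{eq:V selective def} for $T'\colon\M\to 2^\A$ — which because $T'(F)=\{T(F)\}$ reads $\bar V_{\text{sel}}(x,F)=0\iff x=T(F)$ — into \eqref{eq:V exhaustive def} for $T'\colon\M\to\A'$, and vice versa; and the selective version for $T'$ is by the same token identical to the defining property of a strict $\M$-identification function for $T$. I do not expect any genuine obstacle here — the content is bookkeeping. The only points requiring a line of care are: (a) noting that $\M$-finiteness is automatically preserved (or irrelevant) under the bijection $a\leftrightarrow\{a\}$, so the tacit finiteness convention in the Definition of consistency causes no mismatch; and (b) being explicit that ``$T'$ considered as a map to $2^\A$'' is being evaluated in the \emph{selective} sense, so that its relevant ``selections'' $t\in T'(F)$ are elements of $\A$ (the single point $T(F)$), not subsets — this is the one place a reader could get confused, and I would state it in one sentence before starting the computation.
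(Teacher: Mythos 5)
Your proof is correct. The paper states Lemma~\ref{lem:basic} without proof, treating it as immediate, and your definition-unwinding via the bijection $a\mapsto\{a\}$ between $\A$ and $\A'=\{\{a\}\,|\,a\in\A\}$ (together with the observation that $T'(F)$ being a singleton collapses \eqref{eq:weak cons} for $T'$, \eqref{eq:strong cons} for $T'$, and \eqref{eq:consistency 1} for $T$ into the same condition) is precisely the argument the authors have in mind; your care on points (a) and (b) is appropriate and leaves no gap.
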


While we are aware of contributions to the literature which consider either the selective or the exhaustive interpretation only (see Section~\ref{subsec:literature}), one novelty in the present paper is that we thoroughly study and compare these two alternative notions, which is the content of the next section.

\section{Structural results}\label{sec:structural results}

The structural results presented in this section consist of generalisations of the classical Convex Level Sets (CxLS) property due to \cite{Osband1985} and their immediate implications (Section \ref{subsec:CxLS}), the main result on the mutual exclusivity of selective and exhaustive elicitability (Section \ref{subsec:mutual excl}), and implications of certain specifications of set-valued functionals in Section \ref{subsec:specifications}.

\subsection{CxLS properties and their implications}
\label{subsec:CxLS}

\begin{defn}\label{defn:CxLS}
Let $\T\colon \M\to 2^W$ be a set-valued functional and $\M'\subseteq\M$.
\begin{enumerate}[\rm (i)]
\item
$\T$ has the \emph{selective} CxLS property on $\M'$ if for all $F_0, F_1\in\M'$ and for all $\lambda\in(0,1)$ such that $(1-\lambda)F_0 + \lambda F_1\in\M'$:
\[
 \T(F_0) \cap \T(F_1) \subseteq \T\big((1-\lambda)F_0 + \lambda F_1\big).
\]
\item
$\T$ has the \emph{selective} CxLS* property on $\M'$ if for all $F_0, F_1\in\M'$ and for all $\lambda\in(0,1)$ such that $(1-\lambda)F_0 + \lambda F_1\in\M'$:
\[
\T(F_0) \cap \T(F_1)\neq \emptyset \ \implies \ \T(F_0) \cap \T(F_1) = \T\big((1-\lambda)F_0 + \lambda F_1\big).
\] 
\item
$\T$ has the \emph{exhaustive} CxLS property on $\M'$ if for all $F_0, F_1\in\M'$ and for all $\lambda\in(0,1)$ such that $(1-\lambda)F_0 + \lambda F_1\in\M'$:
\[
\T(F_0) = \T(F_1) \  \implies \  \T(F_0) = \T\big((1-\lambda)F_0 + \lambda F_1\big).
\] 
\end{enumerate}
\end{defn}
If we omit to mention the class $\M'$ explicitly, we mean that $T$ has the corresponding CxLS property on $\M$.
The exhaustive CxLS property is the most common one in the literature, and the one used for point-valued functionals \citep{SteinwartPasinETAL2014, BelliniBignozzi2015, DelbaenETAL2016, WangWei2020}. The selective CxLS property follows the one proposed in \cite{Gneiting2011}, while the selective CxLS* property is novel. However, it is noteworthy that the recent paper \cite{BrehmerStrokorb2019} introduced the notion of \emph{max-functionals}. Using our notation, a real-valued functional $\T\colon\M\to\R$ is called a \emph{max-functional} if for any $F_0, F_1\in\M$ and $\lambda\in(0,1)$
\[
\T\big((1-\lambda)F_0 + \lambda F_1\big) = \max\big(\T(F_0), \T(F_1)\big).
\]
It is immediate that a real-valued functional $\T\colon\M\to\R$ is a max-functional if and only if the set-valued functional $\T^+(F):= [\T(F), \infty)$ satisfies the selective CxLS* property.\\
The following implications are immediate. 

\begin{lem}\label{lem:implications}
Let $\T\colon \M\to 2^W$ be a set-valued functional and $\M'\subseteq\M$.
\begin{enumerate}[\rm (i)]
\item
If $T$ has the selective CxLS* property on $\M'$, then it also has the selective and the exhaustive CxLS property on $\M'$.
\item
If $T$ is singleton-valued on $\M'$, then the selective CxLS property on $\M'$, the exhaustive CxLS property on $\M'$ and the selective CxLS* property on $\M'$ are equivalent.
\end{enumerate}
\end{lem}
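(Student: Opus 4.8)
\emph{Proof plan.} The plan is to prove part (i) by an elementary case distinction and then obtain part (ii) from part (i) together with the special structure of singleton-valued functionals. Throughout, fix $F_0,F_1\in\M'$ and $\lambda\in(0,1)$ with $F_\lambda:=(1-\lambda)F_0+\lambda F_1\in\M'$, and recall the tacit standing assumption that functionals are nonempty-valued.

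For part (i), assume $T$ has the selective CxLS* property. To obtain the selective CxLS property I would distinguish whether $\T(F_0)\cap\T(F_1)$ is empty: if so, the inclusion $\T(F_0)\cap\T(F_1)\subseteq\T(F_\lambda)$ is trivial; if not, CxLS* yields the equality $\T(F_0)\cap\T(F_1)=\T(F_\lambda)$, which in particular gives the inclusion. For the exhaustive CxLS property, suppose $\T(F_0)=\T(F_1)$; by nonemptiness this common set equals $\T(F_0)\cap\T(F_1)\neq\emptyset$, so CxLS* applies and delivers $\T(F_0)=\T(F_0)\cap\T(F_1)=\T(F_\lambda)$. It is exactly here that nonemptiness is used, ruling out the degenerate case $\T(F_0)=\T(F_1)=\emptyset$.

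For part (ii), write $\T(F)=\{t(F)\}$ on $\M'$. By part (i) it suffices to show that the selective CxLS property and the exhaustive CxLS property each imply the selective CxLS* property. The key elementary observation is that for singleton-valued $T$ one has $\T(F_0)\cap\T(F_1)\neq\emptyset$ if and only if $t(F_0)=t(F_1)$, i.e.\ if and only if $\T(F_0)=\T(F_1)$, in which case $\T(F_0)\cap\T(F_1)=\T(F_0)$ is a nonempty singleton. Hence, to verify the conclusion of CxLS*, we may assume $\T(F_0)=\T(F_1)$. Under exhaustive CxLS this immediately gives $\T(F_\lambda)=\T(F_0)=\T(F_0)\cap\T(F_1)$. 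Under selective CxLS we only obtain $\T(F_0)\cap\T(F_1)\subseteq\T(F_\lambda)$, but the left-hand side is a nonempty singleton and $\T(F_\lambda)=\{t(F_\lambda)\}$ is a singleton, so the inclusion forces equality. Either way CxLS* holds, which establishes the three-way equivalence.

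Since each step reduces to such a short case analysis, I do not anticipate a genuine obstacle; the only subtlety is the implicit convention that functionals take nonempty values, which is precisely what makes the implication ``CxLS* $\Rightarrow$ exhaustive CxLS'' (and hence the equivalence in part (ii)) go through without a spurious empty-set case.
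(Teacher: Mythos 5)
The paper gives no proof at all for this lemma, merely remarking that ``the following implications are immediate,'' so your write-up is in effect supplying the details the authors took for granted. Your case analysis is correct throughout: the reduction in part (i) to whether $T(F_0)\cap T(F_1)$ is empty, and in part (ii) the observation that for singleton-valued $T$ the condition $T(F_0)\cap T(F_1)\neq\emptyset$ is equivalent to $T(F_0)=T(F_1)$, together with the fact that a nonempty inclusion of singletons forces equality, is exactly the right argument. Your flag about nonemptiness is well placed: the implication ``CxLS* $\Rightarrow$ exhaustive CxLS'' would fail for a functional taking the value $\emptyset$ at both $F_0$ and $F_1$ but not at $F_\lambda$, since CxLS* is then vacuous while exhaustive CxLS is not. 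The paper does not state nonemptiness as a global standing assumption (it is imposed explicitly only where needed, e.g.\ in Lemma~\ref{lem:selections}), so strictly speaking the lemma as stated needs this convention to be read in; in part (ii) it comes for free from singleton-valuedness. Overall: correct, and consistent with what the paper leaves implicit.
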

The second point of Lemma~\ref{lem:implications} underpins why the distinction of the CxLS properties is obsolete for the point-valued case.

It is classical knowledge originating from the seminal work of \cite{Osband1985} that the exhaustive CxLS property is necessary for exhaustive elicitability and exhaustive identifiability, and that the selective CxLS property is necessary for selective elicitability and selective identifiability. 
Under additional regularity assumptions and for real-valued functionals, \cite{SteinwartPasinETAL2014} established that the CxLS property is also sufficient for both elicitability and identifiability.
A novelty is the following necessity-result.

\begin{prop}\label{prop:CxLS}
If $\T\colon \M\to 2^{\As}$ is selectively elicitable on $\M'\subseteq\M$, then it satisfies the selective CxLS* property on $\M'$.
\end{prop}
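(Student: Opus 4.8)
The plan is to mimic the classical argument of \cite{Osband1985} showing that convex level sets are necessary for elicitability, but with enough care to extract the stronger CxLS* conclusion rather than merely the plain selective CxLS property. So suppose $\Ss\colon\As\times\O\to\R^*$ is a strictly $\M'$-consistent selective scoring function for $\T$. Fix $F_0,F_1\in\M'$ and $\lambda\in(0,1)$ with $F_\lambda:=(1-\lambda)F_0+\lambda F_1\in\M'$, and assume $\T(F_0)\cap\T(F_1)\neq\emptyset$; pick some $t^*\in\T(F_0)\cap\T(F_1)$. The first step is to observe that, by linearity of $F\mapsto\bar\Ss(x,F)$ in $F$ (which holds on the relevant integrable range, using $\M'$-finiteness), for any $x\in\As$
\[
\bar\Ss(x,F_\lambda)=(1-\lambda)\bar\Ss(x,F_0)+\lambda\bar\Ss(x,F_1).
\]
Since $t^*\in\T(F_0)$ and $t^*\in\T(F_1)$, strict consistency gives $\bar\Ss(t^*,F_i)\le\bar\Ss(x,F_i)$ for $i=0,1$ and all $x$, hence $\bar\Ss(t^*,F_\lambda)\le\bar\Ss(x,F_\lambda)$ for all $x\in\As$; that is, $t^*$ is a Bayes act under $F_\lambda$, so $t^*\in\T(F_\lambda)$. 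This already proves the inclusion $\T(F_0)\cap\T(F_1)\subseteq\T(F_\lambda)$ (the selective CxLS property), so the content of the proposition is the reverse inclusion under the nonemptiness hypothesis.

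For the reverse inclusion, take any $x\in\T(F_\lambda)$; I want to show $x\in\T(F_0)\cap\T(F_1)$. Since $x$ and $t^*$ are both in $\T(F_\lambda)$, strict consistency forces $\bar\Ss(x,F_\lambda)=\bar\Ss(t^*,F_\lambda)$. Expanding both sides via the linearity identity above and rearranging yields
\[
(1-\lambda)\big(\bar\Ss(x,F_0)-\bar\Ss(t^*,F_0)\big)+\lambda\big(\bar\Ss(x,F_1)-\bar\Ss(t^*,F_1)\big)=0.
\]
But $t^*\in\T(F_0)$ and $t^*\in\T(F_1)$, so by consistency each bracket is $\ge0$; being a convex combination (with strictly positive weights $1-\lambda,\lambda$) of two nonnegative numbers equal to zero, both brackets vanish: $\bar\Ss(x,F_0)=\bar\Ss(t^*,F_0)$ and $\bar\Ss(x,F_1)=\bar\Ss(t^*,F_1)$. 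Now apply strict consistency in the other direction: equality $\bar\Ss(x,F_i)=\bar\Ss(t^*,F_i)=\min_{x'}\bar\Ss(x',F_i)$ implies $x\in\T(F_i)$ for $i=0,1$. Hence $x\in\T(F_0)\cap\T(F_1)$, completing the reverse inclusion and therefore the equality $\T(F_0)\cap\T(F_1)=\T(F_\lambda)$.

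The main technical point to handle carefully is the use of $\M'$-finiteness to guarantee that all the expected scores appearing are finite real numbers, so that the linearity-in-$F$ identity and the subtraction/rearrangement steps are legitimate (with $\R^*$-valued scores one must avoid $\infty-\infty$); this is exactly why the definition tacitly assumes $\M'$-finiteness, and I would invoke it explicitly at the start. A secondary subtlety worth a remark is that strict consistency is used in both directions — once to deduce optimality of $t^*$ (hence membership of the midpoint), and once in its genuinely ``strict'' form to upgrade the equality of expected scores to membership in the functional — and the nonemptiness hypothesis $\T(F_0)\cap\T(F_1)\neq\emptyset$ is what supplies the common optimizer $t^*$ needed to link the three measures. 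Everything else is the routine convexity argument, so I do not expect any real obstacle beyond bookkeeping the finiteness hypotheses.
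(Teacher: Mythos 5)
Your proof is correct and uses essentially the same argument as the paper: linearity of the expected score in the distribution, the strict consistency dichotomy (zero difference iff in the functional, positive otherwise), and a convex combination argument. You merely organize it as forward and reverse inclusions where the paper does both at once via the sign trichotomy in its equation (3.1); the content is identical.
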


\begin{proof}
  Let $F_0, F_1 \in\M'$, $\l\in(0,1)$ such that $F_\l = (1-\l)F_0 + \l F_1\in\M'$, and suppose there exists $t\in \T(F_0)\cap\T(F_1)$.
  Let $S\colon \As\times \O\to\R^*$ be a strictly $\M'$-consistent selective scoring function for $\T$. 
Moreover, let $x\in\As$. Note that for $i\in\{0,1\}$
\[
\bar S(x,F_i) - \bar S(t,F_i) 
\begin{cases}
=0, & \text{if } x\in \T(F_i)\\
>0, & \text{if } x\notin \T(F_i) 
\end{cases}
\]
due to the strict $\M$-consistency of $S$. This implies that
\begin{align}\label{eq:linearity}
\bar S(x,F_\lambda)  - \bar S(t,F_\lambda)
&= (1-\lambda)\big( \bar S(x,F_0)  - \bar S(t,F_0)\big) + \lambda\big(\bar S(x,F_1)  - \bar S(t,F_1)\big)\\[0.5em] \nonumber
&\begin{cases}
=0, & \text{if } x\in \T(F_0)\cap \T(F_1)\\
>0, & \text{if } x\notin \T(F_0)\cap \T(F_1).
\end{cases}
\end{align}
The identity in \eqref{eq:linearity} stems from the fact that the expected score $\bar S(\cdot, \cdot)$ behaves ``linearly'' in its second argument, which is the integration measure.
Again, invoking the strict $\M$-consistency of $S$, the assertion follows.
\end{proof}

For our next result, we need to introduce a property which essentially excludes any degenerate cases of set-valued functionals, e.g.\ being singleton-valued.
\begin{defn}
A set-valued functional $T\colon\M\to 2^W$ has the \emph{proper-subset property} if there are $F,G\in\M$ such that 
\[
\emptyset\neq T(G)\subsetneq T(F)
\]
and for all $\varepsilon\in(0,1)$ there exists a $\l_0 \in (0,\varepsilon)$ such that $(1-\lambda_0)F+\lambda_0 G\in\M$.
\end{defn}

\begin{thm}\label{thm:CxLS}
If $T\colon\M\to\Ae$ satisfies the proper-subset property and the selective CxLS* property, it is not exhaustively elicitable.
\end{thm}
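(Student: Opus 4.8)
The plan is to argue by contradiction using the scoring function quantitatively. Suppose $T$ is exhaustively elicitable on $\M$, and let $S=\Se$ be a strictly $\M$-consistent exhaustive scoring function for $T$; by our standing convention it is $\M$-finite, so all expected scores below are real numbers. Using the proper-subset property, fix $F,G\in\M$ with $\emptyset\neq T(G)\subsetneq T(F)$, and for $\lambda\in(0,1)$ write $F_\lambda:=(1-\lambda)F+\lambda G$. The first step is to determine $T$ along the segment from $F$ to $G$: since $T(G)\subseteq T(F)$ and $T(G)\neq\emptyset$, we have $T(F)\cap T(G)=T(G)\neq\emptyset$, so the selective CxLS* property forces $T(F_\lambda)=T(G)$ for every $\lambda\in(0,1)$ with $F_\lambda\in\M$. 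In particular, on each such $F_\lambda$ the correct report is $T(G)$, whereas $T(F)\neq T(G)$ is an incorrect one.

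The second step exploits the linearity of $\bar S$ in its measure argument. Consider $\Delta(H):=\bar S(T(G),H)-\bar S(T(F),H)$, which is finite by $\M$-finiteness and satisfies $\Delta(F_\lambda)=(1-\lambda)\Delta(F)+\lambda\Delta(G)$. Strict $\M$-consistency at $F$ (whose unique minimiser is $T(F)\neq T(G)$) gives $a:=\Delta(F)>0$; at $G$ (unique minimiser $T(G)\neq T(F)$) it gives $\Delta(G)<0$, so set $b:=-\Delta(G)>0$; and at any admissible $F_\lambda$ (unique minimiser $T(F_\lambda)=T(G)\neq T(F)$) it gives $\Delta(F_\lambda)<0$. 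Substituting the linear relation, $(1-\lambda)a-\lambda b<0$, which rearranges to $\lambda> a/(a+b)$. Thus every $\lambda\in(0,1)$ with $F_\lambda\in\M$ must satisfy $\lambda>a/(a+b)$, and note $a/(a+b)\in(0,1)$ since $a,b>0$.

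Finally, apply the remaining half of the proper-subset property with $\varepsilon:=a/(a+b)\in(0,1)$: there exists $\lambda_0\in(0,\varepsilon)$ with $F_{\lambda_0}=(1-\lambda_0)F+\lambda_0 G\in\M$. But then the previous paragraph yields $\lambda_0>a/(a+b)=\varepsilon$, contradicting $\lambda_0<\varepsilon$. Hence no strictly $\M$-consistent exhaustive scoring function for $T$ can exist, i.e.\ $T$ is not exhaustively elicitable.

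The argument is short once set up, and I do not anticipate a serious obstacle, but there are two points to handle with care. First, one cannot simply invoke the classical necessity of the exhaustive CxLS property for exhaustive elicitability: that property is perfectly compatible with the hypotheses here and yields no contradiction by itself, so it is essential to use the scoring function \emph{quantitatively} together with the fact that the proper-subset property furnishes mixture weights $\lambda_0$ arbitrarily close to $0$. Second, the integrability bookkeeping matters: $\M$-finiteness is exactly what makes $\Delta(F)$ and $\Delta(G)$ real numbers, so that the threshold $a/(a+b)$ is well defined and the affine identity $\Delta(F_\lambda)=(1-\lambda)\Delta(F)+\lambda\Delta(G)$ is legitimate.
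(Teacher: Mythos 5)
Your proof is correct and takes essentially the same approach as the paper's: both argue by contradiction, use the selective CxLS* property to pin down $T(F_\lambda)=T(G)$ along the segment, and then exploit linearity of the expected score in the measure together with the proper-subset property's ability to furnish arbitrarily small mixture weights. The only cosmetic difference is that you make the threshold $a/(a+b)$ explicit where the paper simply invokes a ``sufficiently small $\lambda_0$.''
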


\begin{proof}
Assume $S$ is a strictly $\M$-consistent exhaustive scoring function for $T$. Let $F,G\in\M$ be such that $\emptyset \neq T(G)\subsetneq T(F)$. Then
\[
\bar S(T(F),F) - \bar S(T(G),F)<0<\bar S(T(F),G) - \bar S(T(G),G).
\]
The proper-subset property implies that there is a sufficiently small $\lambda_0\in(0,1)$, such that $(1-\lambda_0)F+\lambda_0 G\in\M$ and, exploiting the selective CxLS* property yielding $T((1-\lambda_0)F+\lambda_0 G) = T(G)$, we end up with
\begin{multline*}
\bar S(T(F),(1-\lambda_0)F+\lambda_0 G) - \bar S(T(G),(1-\lambda_0)F+\lambda_0 G) \\
= (1-\lambda_0)\big(\bar S(T(F),F) - \bar S(T(G),F)\big) + \lambda_0 \big(\bar S(T(F),G) - \bar S(T(G),G)\big) <0,
\end{multline*}
which violates the strict $\M$-consistency.
Note that the last inequality only holds under the tacit assumption that $S$ is $\M$-finite.
\end{proof}

\begin{rem}
Remarkably, the combination of the selective CxLS* property and the proper-subset property implies that there are $F, G\in\M$ with $T(F) \neq T(G)$ such that for all $\lambda\in(0,1)$ it holds that $T((1-\lambda)F+\lambda G) \in\{T(F), T(G)\}$. That means, in our Theorem~\ref{thm:CxLS} we directly recover the condition of Theorem 3.3 in \cite{BrehmerStrokorb2019}. Even though their result is stated for real-valued functionals only, it immediately generalises to the set-valued case. Hence, the conclusions coincide in both instances implying that $T$ fails to be (exhaustively) elicitable.
\end{rem}

\subsection{Mutual exclusivity}
\label{subsec:mutual excl}

We now present our main result, which states that, for functionals satisfying the proper-subset property, selective and exhaustive elicitability are mutually exclusive.
The proof follows immediately from Proposition~\ref{prop:CxLS} and Theorem~\ref{thm:CxLS}.

\begin{thm}[Mutual exclusivity]\label{thm:exclusivity}
  Let $T\colon \M\to \Ae\subseteq 2^{\As}$ be a set-valued functional with the proper-subset property. Then $T$ cannot be both selectively elicitable and exhaustively elicitable.
\end{thm}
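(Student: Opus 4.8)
The plan is to combine the two structural results already established. The statement says: if $T\colon\M\to\Ae\subseteq 2^{\As}$ has the proper-subset property, then it cannot be simultaneously selectively elicitable and exhaustively elicitable. So I would argue by contradiction: assume $T$ is selectively elicitable on $\M$. The goal is then to show $T$ cannot be exhaustively elicitable, which is exactly what Theorem~\ref{thm:CxLS} delivers once its hypotheses are verified.

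First I would invoke Proposition~\ref{prop:CxLS} with $\M' = \M$: selective elicitability of $T$ forces the selective CxLS* property on $\M$. Second, note that the proper-subset property is assumed outright in the hypothesis of the theorem. Hence both hypotheses of Theorem~\ref{thm:CxLS} — the selective CxLS* property and the proper-subset property — are in force. Third, apply Theorem~\ref{thm:CxLS} directly to conclude that $T$ is not exhaustively elicitable. This contradicts the assumption that $T$ is both, so the two notions are mutually exclusive. The whole argument is essentially a two-line deduction chaining the previous two results, which matches the remark in the text that ``the proof follows immediately from Proposition~\ref{prop:CxLS} and Theorem~\ref{thm:CxLS}.''

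The only point requiring a sliver of care is the bookkeeping about the class $\M'$ over which elicitability and the CxLS* property hold. Proposition~\ref{prop:CxLS} is stated for a subclass $\M'\subseteq\M$, but since the theorem speaks of elicitability on all of $\M$ (recall the convention that unqualified ``elicitable'' means ``elicitable on $\M$''), I take $\M'=\M$ throughout, so there is no mismatch: the CxLS* property one extracts holds on $\M$, which is precisely what Theorem~\ref{thm:CxLS} consumes. I would also double-check that the $\M$-finiteness tacitly assumed of the exhaustive score in Theorem~\ref{thm:CxLS} is consistent with the standing conventions of the paper — it is, since all scoring functions are tacitly $\M'$-finite by the remark following the consistency/elicitability definition.

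Honestly, there is no real obstacle here: the mathematical content lives entirely in Proposition~\ref{prop:CxLS} (the refinement of Osband's necessity result to the CxLS* version) and in Theorem~\ref{thm:CxLS} (the incompatibility of CxLS* with exhaustive elicitability under a non-degeneracy hypothesis). If I were writing the proof fresh without those lemmas, the hard part would be Proposition~\ref{prop:CxLS}: showing that strict consistency of a selective score, together with the affine dependence of $\bar S(\cdot,F)$ on $F$, upgrades the plain selective CxLS inclusion to the two-sided CxLS* equality — i.e.\ that a point lying in the intersection $\T(F_0)\cap\T(F_1)$ is in $\T(F_\lambda)$ \emph{and} conversely any point in $\T(F_\lambda)$ already lies in both $\T(F_0)$ and $\T(F_1)$, via the sign dichotomy of score differences. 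But given that Proposition~\ref{prop:CxLS} and Theorem~\ref{thm:CxLS} are available, the present theorem is a one-paragraph corollary.
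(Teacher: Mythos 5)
Your proposal is correct and follows exactly the paper's intended route: set $\M'=\M$, use Proposition~\ref{prop:CxLS} to extract the selective CxLS* property from selective elicitability, and then apply Theorem~\ref{thm:CxLS} together with the assumed proper-subset property to rule out exhaustive elicitability. Your bookkeeping remarks about $\M'=\M$ and the tacit $\M$-finiteness convention are accurate and match the paper's standing assumptions.
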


This result gives a broad insight into the structure of set-valued elicitability.
It basically establishes the following partition of set-valued functionals: 
\begin{enumerate}[(1)]
\item
The class of selectively elicitable functionals.
\item
The class of exhaustively elicitable functionals.
\item
The class of functionals which are not elicitable at all.
\end{enumerate}
The result also gives a powerful tool to rule out elicitability without the need for a direct argument, which in some cases may appear quite challenging a priori.
We give several example applications of Theorem~\ref{thm:exclusivity} below.

\begin{exmp}\label{examples}
\begin{enumerate}[\rm (i)]
\item
Any $\a$-quantile, $\a\in(0,1)$ is selectively elicitable. If the class $\M$ is reasonably large (e.g.\ it contains all measures with finite support), then the $\a$-quantile clearly satisfies the proper-subset property. Hence, it fails to be exhaustively elicitable on such a class.
\item
If $\M$ is the class of distributions on $\R$ with finite support, then the mode functional is selectively elicitable on $\M$ with the strictly $\M$-consistent selective scoring function $S(x,y) = \one\{x\neq y\}$ \citep{Heinrich2014, Gneiting2017}. Since the mode functional satisfies the proper-subset property on $\M$, it also fails to be exhaustively elicitable on $\M$.
\item
Any elicitable real-valued functional $T\colon\M\to\R$ induces trivial set-valued functionals $T^-(F):= (-\infty,T(F)]$ and $T^+(F) = [T(F),\infty)$. Clearly, the elicitability of $T$ is equivalent to the exhaustive elicitability of $T^-$ and $T^+$ considered as maps to $\A^- = \{(-\infty, x]\,|\,x\in\R\}$ and $\A^+ = \{[x,\infty)\,|\,x\in\R\}$, e.g.\ by invoking the revelation principle \citep{Osband1985, Gneiting2011, Fissler2017}. If $T$ is not constant on $\M$, then $T^-$ and $T^+$ also satisfy the proper-subset property, which means they violate the selective CxLS* property such that they are not selectively elicitable.\\
\emph{Vice versa,} if $T^+$ or $T^-$ satisfy the selective CxLS* property, then $T$ or $-T$ is a max-functional in the sense of \cite{BrehmerStrokorb2019} such that $T$ (and $-T$) is not elicitable unless it is constant, which recovers their Corollary 3.4.
\item
In \cite{FisslerHlavinovaRudloff_RM}, the exhaustive elicitability of the set-valued systemic risk measures defined by \cite{FeinsteinRudloffWeber2017} has been established. For a random vector $Y$ representing a financial system, a measure of systemic risk is defined as a collection of capital allocations $k\in\R^d$ such that $\rho(\Lambda(Y+k))\leq0$ where $\rho$ is a scalar risk measure and $\Lambda\colon\R^d\to\R$ a non-decreasing aggregation function. The cash-invariance property of these risk measures implies that they satisfy the proper-subset property. This means that they cannot be selectively elicitable.
\item 
In Section~\ref{sec:PIs}, we consider the class of $\a$-prediction intervals, i.e., of intervals a random variable will fall into with a probability of at least $\a$. We show that, on a suitable class of probability distributions $\M$, this class of $\a$-prediction intervals is exhaustively elicitable on $\M$, and in Lemma~\ref{lem:subset example} we construct an example that shows that the proper-subset property is satisfied on $\M$. The combination of these results then rules out the selective elicitability of the class of $\a$-prediction intervals on $\M$.
\item
Theorem \ref{prop:SI} shows that there are classes of distributions where the collection of all shortest $\a$-prediction intervals fails to be elicitable in either sense---selectively and exhaustively; see Remark \ref{rem:double failure} for details. 
Up to our knowledge, this is the first non-degenerate  example of a set-valued functional which is not elicitable in either sense. (Clearly, any non-elicitable single-valued functional, such as the variance, would trivially satisfy such a statement by virtue of Lemma \ref{lem:basic}).
\item
In Section~\ref{sec:Vorob'ev Quantiles}, we establish that Vorob'ev quantiles of random sets are selectively identifiable and exhaustively elicitable. Under the additional mild proper-subset property, which is satisfied in a lot of settings, this means that Vorob'ev quantiles cannot be selectively elicitable.
\end{enumerate}
\end{exmp}

\begin{rem}\label{rem:id exclusivity}
  Elicitability and identifiability have structural differences in this context.
  While Theorem~\ref{thm:CxLS} carries over to exhaustive identifiability with an easy adaption of the proof, 
it does not seem to be possible to establish an analogon of Proposition~\ref{prop:CxLS} for selective identifiability due to possible cancellation effects. One can merely establish that selective identifiability implies the selective CxLS property. 
Therefore, it remains open if selective and exhaustive identifiability are mutually exclusive in the sense of Theorem~\ref{thm:exclusivity}.
\end{rem}

\subsection{Specifications of set-valued functionals}
\label{subsec:specifications}

Let us now take a look at \emph{specifications} of set-valued functionals. For a set $W\neq\emptyset$ and a set-valued functional $T'\colon\M\to 2^W$, a point-valued functional $T\colon\M\to W$ is a specification of $T'$ if $T(F)\in T'(F)$ for all $F\in\M$. We start with a lemma, the proof of which is straightforward.

\begin{lem}\label{lem:selections}
Let $T'\colon\M\to 2^{\A}$ be a set-valued functional with $T'(F)\neq\emptyset$ for all $F\in\M$, and let $T\colon\M\to\A$ be a specification of $T'$. 
\begin{enumerate}[\rm (i)]
\item
If $S\colon\A\times \O\to\R^*$ is a (strictly) $\M$-consistent selective scoring function for $T'$, then it is an $\M$-consistent scoring function for $T$.
\item
If $V\colon\A\times \O\to\R$ is a (strict) selective $\M$-identification function for $T'$, then it is an $\M$-identification function for $T$.
\end{enumerate}
\end{lem}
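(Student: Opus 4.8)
The plan is to verify the two claims directly from the definitions, observing that the key mechanism is the inclusion $T(F) \in T'(F)$, which turns statements quantified over $t \in T'(F)$ into statements about the particular selection $T(F)$.

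First I would prove (i). Let $S$ be a (strictly) $\M$-consistent selective scoring function for $T'$, so that \eqref{eq:weak cons} holds: $\bar S(t,F) \le \bar S(x,F)$ for all $x \in \A$, all $t \in T'(F)$, and all $F \in \M$. Since $T$ is a specification of $T'$, we have $T(F) \in T'(F)$ for every $F \in \M$; substituting $t = T(F)$ into \eqref{eq:weak cons} gives $\bar S(T(F),F) \le \bar S(x,F)$ for all $x \in \A$ and all $F \in \M$, which is precisely \eqref{eq:consistency 1}, the $\M$-consistency of $S$ for the point-valued functional $T$. Note that the parenthetical ``(strictly)'' in the hypothesis does not upgrade the conclusion: strict selective consistency says equality forces $x \in T'(F)$, which is weaker than $x = T(F)$, so only plain $\M$-consistency for $T$ is claimed — and this is exactly what the statement asserts, so nothing more is needed.

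Next I would prove (ii) by the same substitution. Let $V$ be a (strict) selective $\M$-identification function for $T'$; by Definition~\ref{defn:identification functions}(i) this means $\bar V(t,F) = 0$ for all $t \in T'(F)$ and all $F \in \M$. Again using $T(F) \in T'(F)$, we get $\bar V(T(F),F) = 0$ for all $F \in \M$, which is the definition of $V$ being an $\M$-identification function for $T$. As in (i), strictness of the selective identification function for $T'$ only yields $\bar V(x,F) = 0 \iff x \in T'(F)$, which need not pin down $x = T(F)$, so the conclusion is correctly stated as mere (non-strict) identification for $T$.

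I do not anticipate a genuine obstacle here; the lemma is essentially a bookkeeping observation, as the paper itself notes (``the proof of which is straightforward''). The only point worth flagging is the mild subtlety already mentioned: one must resist the temptation to claim strictness in the conclusions, since a specification $T$ of $T'$ generally fails to be the \emph{unique} best action or the \emph{unique} zero even when $T'$ is handled strictly — indeed this is precisely the phenomenon exploited later (e.g.\ in Proposition~\ref{prop:selections b}) to show that specifications such as the lower quantile need not be elicitable. So the careful statement and the careful proof both hinge on distinguishing ``$x \in T'(F)$'' from ``$x = T(F)$''.
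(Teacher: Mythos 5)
Your proof is correct and is exactly the straightforward substitution $t = T(F) \in T'(F)$ that the paper has in mind when it omits the proof as ``straightforward.'' Your added remark on why strictness does not transfer is a valid and useful clarification, consistent with the discussion following the lemma in the paper.
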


Clearly, the scoring function $S$ (identification function $V$) appearing in Lemma~\ref{lem:selections} is only \emph{strictly} consistent (\emph{strict}) for $T$ if $T'$ is a singleton on $\M$, that is, $T'(F) = \{T(F)\}$ for all $F\in\M$.
This suggests the question as to whether the specification can be elicitable (identifiable) at all, which the following proposition is concerned with.

\begin{prop}\label{prop:selections}
Let $T'\colon\M\to 2^{\A}$ be selectively elicitable and $T\colon\M\to\A$ a specification of $T'$. Let $\M_1:=\{F\in\M\,|\,T'(F)=\{T(F)\}\}$ and suppose that $\M\setminus\M_1\neq\emptyset$. \\
Let $\S'_{\M}$ ($\S'_{\M_1}$) be the class of strictly $\M$-consistent ($\M_1$-consistent) selective scoring functions for $T'$.
If $\S'_{\M}=\S'_{\M_1}$, then $T\colon\M\to\A$ is not elicitable.
\end{prop}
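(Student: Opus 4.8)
The plan is to argue by contradiction: suppose $T\colon\M\to\A$ is elicitable, and use a strictly $\M$-consistent scoring function $S$ for $T$ to manufacture a strictly $\M$-consistent selective scoring function for $T'$ that fails to be strictly $\M_1$-consistent, contradicting the hypothesis $\S'_\M = \S'_{\M_1}$. First I would fix any $S_0 \in \S'_\M$ (nonempty since $T'$ is selectively elicitable) and any strictly $\M$-consistent $S$ for the specification $T$. By Lemma~\ref{lem:selections}(i), $S$ is already $\M$-consistent (in the selective sense) for $T'$, but it is strictly consistent for $T'$ only on $\M_1$; on $\M\setminus\M_1$ it ``over-resolves,'' singling out $T(F)$ rather than all of $T'(F)$.

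The key construction is a convex combination: for $\mu\in(0,1]$ set $S_\mu := \mu S_0 + (1-\mu) S$, possibly after rescaling to keep $\M$-finiteness. Because $S_0$ is strictly $\M$-consistent for $T'$ and $S$ is $\M$-consistent for $T'$, the sum $S_\mu$ is strictly $\M$-consistent for $T'$: for any $F\in\M$ and $x\in\As$, $\bar S_\mu(x,F) - \bar S_\mu(t,F) = \mu\big(\bar S_0(x,F) - \bar S_0(t,F)\big) + (1-\mu)\big(\bar S(x,F) - \bar S(t,F)\big)$ for $t\in T'(F)$, and the first term is $>0$ whenever $x\notin T'(F)$ while the second is $\ge 0$; so the sum is strictly positive off $T'(F)$. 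Hence $S_\mu \in \S'_\M$ for every $\mu\in(0,1]$.

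Now I would examine what happens as the weight on $S$ increases — the informative regime is the boundary case $\mu$ small, or more cleanly, one looks directly at $S$ itself and shows it lies in $\S'_{\M_1}$ but not $\S'_\M$. Indeed, on $\M_1$ the functional $T'$ is singleton-valued, $T'(F)=\{T(F)\}$, so strict $\M_1$-consistency of $S$ for $T'$ is literally strict $\M_1$-consistency of $S$ for $T$, which holds by assumption; thus $S \in \S'_{\M_1}$. But $S \notin \S'_\M$: pick $F \in \M\setminus\M_1$, so $T'(F)$ contains some point $x \ne T(F)$; strict $\M$-consistency of $S$ for $T$ forces $\bar S(x,F) > \bar S(T(F),F)$, so $\bar S(x,F) - \bar S(t,F) > 0$ for $t = T(F) \in T'(F)$, meaning equality in the selective consistency inequality fails for a point $x \in T'(F)$ — which is exactly the negation of strict $\M$-consistency of $S$ for $T'$. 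Therefore $\S'_{\M_1} \setminus \S'_\M \ne \emptyset$, contradicting $\S'_\M = \S'_{\M_1}$.

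The main obstacle, and the point to handle carefully, is the direction of the set inclusions $\S'_\M \subseteq \S'_{\M_1}$ (always true, since strictness on a larger class implies strictness on a subclass) versus the needed strict inclusion when $T$ is elicitable; the whole argument hinges on producing the witness $S \in \S'_{\M_1}\setminus\S'_\M$, and on confirming that ``$S$ is a strictly consistent score for the specification $T$'' genuinely forces the equality case in the selective inequality to be violated at a non-$T(F)$ element of $T'(F)$ — i.e. that strictness for $T$ is incompatible with strictness for $T'$ off $\M_1$. One should also take care that all the scores involved remain $\M$-finite (so that the convex-combination bookkeeping and the ``linearity in the measure'' step are valid), and, if $S$ or $S_0$ take the value $+\infty$, restrict attention to the genuinely finite regime or invoke the equivalence-under-affine-transformation remark to normalise.
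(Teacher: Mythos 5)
Your final argument --- fix a strictly $\M$-consistent score $S$ for the specification $T$, observe that $S\in\S'_{\M_1}$ (since on $\M_1$ strict consistency for $T'$ and for $T$ literally coincide), and that $S\notin\S'_\M$ (since for $F\in\M\setminus\M_1$ there is $x\in T'(F)$ with $x\ne T(F)$, for which $\bar S(x,F)>\bar S(T(F),F)$), contradicting $\S'_\M=\S'_{\M_1}$ --- is correct and is in substance the paper's proof, just cast as a direct contradiction rather than via the chain of inclusions $\S_\M\subseteq\S_{\M_1}=\S'_{\M_1}=\S'_\M$ followed by noting no element of $\S'_\M$ is strictly $\M$-consistent for $T$.

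However, the preliminary convex-combination construction is built on a misreading of Lemma~\ref{lem:selections}(i). That lemma runs in the direction ``selectively consistent for $T'$ $\Rightarrow$ consistent for $T$''; it does \emph{not} say that strict $\M$-consistency of $S$ for $T$ yields $\M$-consistency of $S$ for $T'$ in the selective sense. In fact the opposite holds: if $F\in\M\setminus\M_1$ and $t\in T'(F)\setminus\{T(F)\}$, then strict consistency of $S$ for $T$ gives $\bar S(t,F)>\bar S(T(F),F)$, which already violates the selective-consistency inequality $\bar S(t,F)\le\bar S(x,F)$ (with $x=T(F)$). Your own ``over-resolves'' remark captures exactly this. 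Consequently $S_\mu=\mu S_0+(1-\mu)S$ is \emph{not} $\M$-consistent for $T'$ either (the same pair $(t,x)$ witnesses the failure, since the $S_0$-term vanishes and the $S$-term is strictly positive). Fortunately none of this is used once you pivot to inspecting $S$ directly, so the proposal as a whole goes through; but the $S_\mu$ paragraph should be deleted, as it is both unnecessary and incorrect as stated.
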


\begin{proof}
Let $\S_{\M}$ ($\S_{\M_1}$) be the class of strictly $\M$-consistent ($\M_1$-consistent) scoring functions for $T$. If $\S'_{\M_1} = \S'_{\M}$, it holds that 
\[
\S_{\M}\subseteq \S_{\M_1} = \S'_{\M_1} = \S'_{\M}.
\]
However, any $S'\in \S'_{\M}$ fails to be strictly $\M$-consistent for $T$. Hence, $\S_{\M} = \emptyset$.
\end{proof}

A common problem when applying Proposition~\ref{prop:selections} for practical purposes is that most characterisation results concerning the class of strictly consistent scoring functions, if known, typically assume regularity conditions on the scoring functions such as continuity or differentiability; cf.\ Table 1 in \cite{Gneiting2011b} or Osband's Principle \citep{Osband1985, FisslerZiegel2016}. 
Interestingly, an argument similar to the one used in the proof of Theorem~\ref{thm:CxLS} leads to a result which rules out the elicitability of specifications under very weak conditions on the functional. In particular, it dispenses with regularity conditions on scoring functions.

In line with \cite{BelliniBignozzi2015} we call a functional $T$ from a convex class of distributions to some topological space $\A$ mixture-continuous if for any $F_0, F_1\in\F$ the map $[0,1] \ni \lambda \mapsto T((1-\lambda)F_0+\lambda F_1)\in\A$ is continuous.

\begin{prop}\label{prop:selections b}
Let $T'\colon\M\to 2^{\A}$, $T'\neq \emptyset$, satisfy the selective CxLS* property. Suppose there are distributions $F,G,H\in\M$ such that 
\be{eq:disjoint}
T'(F)\cap T'(G) = \{t_1\}, \quad T'(F)\cap T'(H) = \{t_2\}, \quad \text{with }\ t_1\neq t_2.
\ee
Then, any specification $T\colon\M\to \A$ of $T'$ is neither elicitable nor identifiable.
Moreover, if $\A$ is a space with a Fr\'echet topology, that is, if for any $a, b\in\A$ with $a\neq b$ there is an open set $U\subseteq \A$ such that $a\in U$ and $b\notin U$, then any specification $T$ fails to be mixture-continuous. 
\end{prop}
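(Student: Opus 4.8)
The plan is to mimic the argument of Theorem~\ref{thm:CxLS}, exploiting the selective CxLS* property to pin down the value of $T'$ on suitable mixtures and then contradicting any candidate strict consistency/identification of a specification. First I would fix a specification $T\colon\M\to\A$ and consider the three distributions $F,G,H$ from \eqref{eq:disjoint}. By the selective CxLS* property applied to the pair $(F,G)$, whose functionals intersect in $\{t_1\}$, we get $T'((1-\lambda)F+\lambda G) = \{t_1\}$ for every $\lambda\in(0,1)$; likewise, applied to $(F,H)$, we get $T'((1-\mu)F+\mu H) = \{t_2\}$ for every $\mu\in(0,1)$. Hence on these two mixture segments the set-valued functional $T'$ is singleton-valued and constant, equal to $\{t_1\}$ and $\{t_2\}$ respectively, so the specification $T$ is forced to take the value $t_1$ on the first segment and $t_2$ on the second, while $T(F)\in T'(F)\ni t_1,t_2$.

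Next I would rule out elicitability. Suppose $S$ were a strictly $\M$-consistent scoring function for $T$. Evaluate $\bar S(t_1,\cdot) - \bar S(t_2,\cdot)$: on the first mixture segment $T$ equals $t_1$, so strict consistency gives $\bar S(t_1, (1-\lambda)F+\lambda G) < \bar S(t_2, (1-\lambda)F+\lambda G)$ for all $\lambda\in(0,1)$ (using $t_1\neq t_2$); on the second segment $T$ equals $t_2$, giving $\bar S(t_2, (1-\mu)F+\mu H) < \bar S(t_1, (1-\mu)F+\mu H)$. By linearity of $\bar S$ in its second argument, the function $g(\lambda) := \bar S(t_1,(1-\lambda)F+\lambda G) - \bar S(t_2,(1-\lambda)F+\lambda G)$ is affine in $\lambda$ and strictly negative on $(0,1)$, so by continuity $g(0) = \bar S(t_1,F) - \bar S(t_2,F) \le 0$; similarly from the second segment $\bar S(t_2,F) - \bar S(t_1,F) \le 0$, whence $\bar S(t_1,F) = \bar S(t_2,F)$. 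But $T(F)$ is one of $t_1$ or $t_2$ (more carefully, $T(F)\in T'(F)$, and we can choose to compare $T(F)$ against whichever of $t_1,t_2$ differs from it, both of which lie in $T'(F)$ by \eqref{eq:disjoint}), so strict consistency forces a strict inequality $\bar S(T(F),F) < \bar S(\tilde t, F)$ for $\tilde t\in\{t_1,t_2\}\setminus\{T(F)\}$, contradicting $\bar S(t_1,F)=\bar S(t_2,F)$. The identifiability part is handled identically: if $V$ were a strict $\M$-identification function for $T$, then $\bar V(t_1,\cdot)$ vanishes on the first segment and $\bar V(t_2,\cdot)$ vanishes on the second; affineness in the second argument forces $\bar V(t_1,F) = 0 = \bar V(t_2,F)$, contradicting that at $F$ at most one of $t_1,t_2$ can be the value $T(F)$ while the other, being distinct from $T(F)$, must give a nonzero expected identification function by strictness.

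For the mixture-continuity claim, I would argue by contradiction using the Fréchet (T$_1$) separation hypothesis. Since $T(F)\neq t_1$ or $T(F)\neq t_2$ — say without loss of generality $T(F)\neq t_1$ (the other case is symmetric) — pick an open $U\subseteq\A$ with $t_1\in U$, $T(F)\notin U$. On the segment $\lambda\mapsto (1-\lambda)F + \lambda G$ we have $T((1-\lambda)F+\lambda G) = t_1 \in U$ for all $\lambda\in(0,1)$ but $T(F)\notin U$ at $\lambda = 0$, so the map $\lambda\mapsto T((1-\lambda)F+\lambda G)$ is discontinuous at $0$, contradicting mixture-continuity.

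\textbf{Main obstacle.} The only delicate point is the handoff at the endpoint $F$: the mixture segments only control $T'$ (and hence $T$) on the \emph{open} interval $(0,1)$, not at $\lambda=0$, so one cannot directly read off $T(F)$. This is why the elicitability and identifiability arguments route through the affineness/continuity of $\bar S(t_1,\cdot)-\bar S(t_2,\cdot)$ (resp.\ $\bar V$) in the mixing parameter to pass to the limit, and why the precise hypothesis \eqref{eq:disjoint} — that \emph{both} $t_1$ and $t_2$ lie in $T'(F)$ — is exactly what is needed to derive a contradiction at $F$ regardless of which value the specification picks there. I expect the write-up to be short once this limiting step is made explicit; the $\M$-finiteness assumption should be invoked to make sure all the relevant $\bar S$ values are real so that the strict inequalities and the limiting argument are legitimate.
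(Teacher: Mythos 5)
Your overall strategy (use the selective CxLS* property to pin $T$ down on the two mixture segments, then exploit affineness of $\bar S$ and $\bar V$ in the mixing parameter) is the same as the paper's, and your identifiability argument is correct and in fact a bit more self-contained than the paper's, which cites an external lemma. However, the elicitability and mixture-continuity steps both contain genuine gaps.

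For elicitability, you derive $\bar S(t_1,F)=\bar S(t_2,F)$ and then claim this contradicts strict consistency at $F$. It does not in general: $T(F)$ is an arbitrary element of $T'(F)$, which may contain points other than $t_1,t_2$. If $T(F)=t_0\notin\{t_1,t_2\}$, strict consistency only gives $\bar S(t_0,F)<\bar S(t_1,F)$ and $\bar S(t_0,F)<\bar S(t_2,F)$, which is perfectly compatible with $\bar S(t_1,F)=\bar S(t_2,F)$, so the proof does not go through for such specifications. The fix is to compare $T(F)$ against $t_1$ (or $t_2$, whichever differs from $T(F)$) along a \emph{single} segment: with $t_0=T(F)\neq t_1$, one has $\bar S(t_0,F)-\bar S(t_1,F)<0$ at $\lambda=0$, while $\bar S(t_0,F_\lambda)-\bar S(t_1,F_\lambda)>0$ for $\lambda\in(0,1)$ because $T(F_\lambda)=t_1\neq t_0$ there; affineness of this difference in $\lambda$ then yields the contradiction. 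This is what the paper does, and note it only needs one of the two segments.

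For mixture-continuity the Fr\'echet open set is taken in the wrong direction. With $t_1\in U$ and $T(F)\notin U$ one gets $\gamma^{-1}(U)\in\{(0,1),(0,1]\}$ (depending on whether $T(G)\in U$), and both of these are open in $[0,1]$, so this choice of $U$ does \emph{not} witness any discontinuity. You need instead an open $U$ with $T(F)\in U$ and $t_1\notin U$, whence $\gamma^{-1}(U)\in\{\{0\},\{0,1\}\}$, which is not open in $[0,1]$; this is the paper's argument.
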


\begin{proof}
Let $T\colon\F\to\A$ be a specification of $T'$ and suppose $S\colon \A\times\O\to\R^*$ is a strictly $\M$-consistent scoring function for $T$. Let $F,G,H\in\M$ satisfy \eqref{eq:disjoint} with $t_1\neq t_2$ specified there. 
The selective CxLS* property implies that for any $\lambda\in(0,1)$ we have that $t_1 = T((1-\lambda)F+\lambda G)$ and $t_2 = T((1-\lambda)F + \lambda H)$. Then, for $t_0 = T(F)$ we have that $t_0\neq t_1$ or $t_0 \neq t_2$. Without loss of generality, assume $t_0\neq t_1$. 
The map $\gamma\colon [0,1]\to \A$, $\lambda \mapsto T((1-\lambda)F + \lambda G)$ is neither injective nor constant, such that Lemma B.1 in \cite{FisslerZiegel2019} implies that $T$ is not identifiable.\\
Assume that there is a strictly $\M$-consistent scoring function $S$ for $T$. This implies that for all $\lambda\in(0,1)$
\[
\bar S(t_0,F) - \bar S(t_1,F) < 0 < \bar S(t_0,(1-\lambda)F + \lambda G) - \bar S(t_1,(1-\lambda)F + \lambda G).
\]
This contradicts the elementary fact that the map $[0,1]\ni\lambda \mapsto \bar S(t_0,(1-\lambda)F + \lambda G) - \bar S(t_1,(1-\lambda)F + \lambda G)$ is continuous (where we have exploited the $\M$-finiteness of $S$), which rules out the elicitability of $T$.
Finally, if $\A$ has a Fr\'echet topology, $\gamma$ is not continuous, which shows that $T$ is not mixture-continuous. Indeed, let $U\subset\A$ be an open set such that $t_0\in U$, but $t_1\notin U$. Then $\gamma^{-1}(U) = \{0\}$, which is not open in $[0,1]$.
\end{proof}

We would like to emphasise that the mere failure of mixture-continuity of $T$ does not rule out its elicitability. Indeed, Proposition 2.2 in \cite{FisslerZiegel2019} (cf.\ Proposition 3.4 in \cite{BelliniBignozzi2015}) only rules out the existence of a \emph{continuous} strictly consistent scoring function for $T$.

\begin{prop}\label{prop:quantile selections}
Let $\M$ be a convex class of distributions such that \eqref{eq:disjoint} is satisfied for the $\a$-quantile, $\a\in(0,1)$. Then no specification of the $\a$-quantile is elicitable.
\end{prop}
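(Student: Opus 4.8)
The statement follows from Proposition~\ref{prop:selections b} once we verify its two hypotheses for the $\a$-quantile: that it satisfies the selective CxLS* property, and that there exist $F, G, H\in\M$ realising the disjointness-type condition \eqref{eq:disjoint}. For the first hypothesis, I would recall the classical fact that the set-valued $\a$-quantile $q_\a(F) = \{x : F(x-)\le \a \le F(x)\}$ is always a closed interval, and that its level sets are convex in the appropriate sense; more precisely, if $t$ is an $\a$-quantile of both $F_0$ and $F_1$, then $F_0(t-)\le\a\le F_0(t)$ and $F_1(t-)\le\a\le F_1(t)$, and taking convex combinations of these inequalities shows $t$ is an $\a$-quantile of $F_\l = (1-\l)F_0+\l F_1$ for every $\l\in(0,1)$. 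The sharper CxLS* conclusion — that the intersection $q_\a(F_0)\cap q_\a(F_1)$, when nonempty, equals $q_\a(F_\l)$ exactly — needs the reverse inclusion: any $\a$-quantile $x$ of $F_\l$ that is not an $\a$-quantile of both $F_0$ and $F_1$ leads, via the same linearity of $x\mapsto F(x)$ in $F$, to a strict inequality $F_\l(x) > \a$ or $F_\l(x-)<\a$, contradiction; one must be a little careful with the half-open nature of the defining inequalities and with one-sided limits, but this is routine. Alternatively, since the indicator/identification function $V(x,y) = \one\{y\le x\} - \a$ is a strict selective identification function for $q_\a$ on a rich enough $\M$ and behaves linearly in $F$, the CxLS* property can be read off directly.

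The second hypothesis is assumed outright in the statement (``such that \eqref{eq:disjoint} is satisfied for the $\a$-quantile''), so there is nothing to prove there; one might add a remark that this assumption is easily met, e.g.\ on the class of finitely supported distributions, by choosing $F$ with a flat stretch of its CDF at level $\a$ (so $q_\a(F)$ is a nondegenerate interval $[t_1,t_2]$ with $t_1\ne t_2$), then perturbing with point masses $G$ and $H$ so as to pin the quantile down to the left endpoint $t_1$ and the right endpoint $t_2$ respectively. Concretely, $G$ puts enough mass just above $t_1$ to force $q_\a(G)=\{t_1\}$ while keeping $t_1\in q_\a(F)$, and symmetrically for $H$ at $t_2$; then $q_\a(F)\cap q_\a(G)=\{t_1\}$ and $q_\a(F)\cap q_\a(H)=\{t_2\}$ with $t_1\ne t_2$.

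With both hypotheses in hand, Proposition~\ref{prop:selections b} applies verbatim: the set-valued $\a$-quantile $T' = q_\a$ is nonempty-valued on $\M$ and satisfies the selective CxLS* property, and \eqref{eq:disjoint} holds, so every specification $T\colon\M\to\R$ of $q_\a$ — in particular the lower quantile, the upper quantile, the midpoint, or Value at Risk under either convention — is neither elicitable nor identifiable, and (since $\R$ with its usual topology is Fr\'echet) fails to be mixture-continuous. I would state the proof in essentially one line invoking Proposition~\ref{prop:selections b}, after the CxLS* verification.

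The only mildly delicate point — the ``main obstacle,'' though it is minor — is the careful handling of the one-sided limits and the half-open inequalities in establishing the CxLS* property (as opposed to the plain CxLS property), i.e.\ ensuring the reverse inclusion $q_\a(F_\l)\subseteq q_\a(F_0)\cap q_\a(F_1)$ genuinely holds rather than just the forward one; invoking the linear identification function $V(x,y)=\one\{y\le x\}-\a$ together with Proposition~\ref{prop:CxLS}-style linearity sidesteps most of this bookkeeping.
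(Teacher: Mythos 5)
Your overall plan — verify the hypotheses of Proposition~\ref{prop:selections b} and invoke it — is the right one and matches the paper. But both of your suggested routes to the selective CxLS* property have a genuine gap, whereas the paper takes a cleaner route. The paper's proof is one line: the $\a$-quantile is selectively \emph{elicitable} (citing \cite{Gneiting2011b}), so Proposition~\ref{prop:CxLS} delivers the CxLS* property, and then Proposition~\ref{prop:selections b} applies.

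The gap in your direct argument: you claim that if $x\in q_\a(F_\l)$ is not a quantile of both $F_0$ and $F_1$, ``linearity of $x\mapsto F(x)$'' forces $F_\l(x)>\a$ or $F_\l(x-)<\a$. That does not follow — if, say, $F_0(x)<\a<F_1(x)$, the convex combination $F_\l(x)=(1-\l)F_0(x)+\l F_1(x)$ can perfectly well equal $\a$. This is precisely the ``cancellation effect'' the paper highlights in Remark~\ref{rem:id exclusivity}. The reverse inclusion $q_\a(F_\l)\subseteq q_\a(F_0)\cap q_\a(F_1)$ is true, but the correct elementary proof needs to exploit the \emph{existence of a common quantile} $t\in q_\a(F_0)\cap q_\a(F_1)$ together with monotonicity of CDFs (compare $F_\l$-values at $x$ and at $t$, observe the resulting equalities, then untangle the convex combination), not just linearity. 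Your alternative route — reading CxLS* off the linearity of the identification function $V(x,y)=\one\{y\le x\}-\a$ — is outright wrong for the same reason, and the paper says so explicitly in Remark~\ref{rem:id exclusivity}: selective identifiability only yields the plain CxLS property, not CxLS*. The cancellation issue is exactly what makes Proposition~\ref{prop:CxLS} work for scoring functions (where a two-sided bound, $\bar S(x,F_i)-\bar S(t,F_i)\ge 0$ with equality iff $x\in T(F_i)$, makes the convex combination zero iff both terms vanish) but fail for identification functions (which are signed and can cancel). So you should invoke selective elicitability of the quantile plus Proposition~\ref{prop:CxLS}, not a bare-hands or identification-function argument.

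Your verification of \eqref{eq:disjoint} on a rich enough class is fine but unnecessary, since the proposition assumes it. The final application of Proposition~\ref{prop:selections b} is correct.
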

\begin{proof}
Since the $\a$-quantile is selectively elicitable (see e.g.\ \cite{Gneiting2011b}), the claim is directly implied by Proposition~\ref{prop:selections b}.
\end{proof}
Note that \eqref{eq:disjoint} is satisfied for the $\alpha$-quantile e.g.\ if $\M$ contains all distributions with finite support. Proposition~\ref{prop:quantile selections} thus rules out the elicitability of the lower quantile or the specification introduced in the recent preprint \cite{AronowLee2018} relative to such classes.

Notably, Proposition~\ref{prop:selections b} rules out the elicitability with an $\M$-finite score, which also translates to Proposition~\ref{prop:quantile selections}. Relaxing this condition and looking at the 0- and 1-quantile, we arrive at functionals which are both selectively and exhaustively elicitable, which is the content of Subsection~\ref{subsec:essential}.

\section{Prediction intervals}\label{sec:PIs}

A common task for the statistical forecaster 
is to report an interval $[a,b]\subseteq  \R$ into which future observations of a given real-valued random variable $Y$ will fall with at least a specified coverage probability $\alpha\in(0,1]$, that is, $\bP(Y\in [a,b])\ge\alpha$. Thereby, the inherent uncertainty of the actual outcome is captured.
Any such interval will be referred to as an $\alpha$-prediction interval. 

The literature on evaluating prediction intervals considers reports for these functionals typically in the exhaustive sense, meaning that an interval is reported rather than a single point.\footnote{Reporting a single point in an $\alpha$-prediction interval does not bear much information such that selective forecasts are of no practical interest.
Concerning exhaustive forecasts, any interval can be identified with its endpoints and therefore the exhaustive elicitability (identifiability) is equivalent to the elicitability (identifiability) of a single vector, making use of the so called \emph{revelation principle}, originating from \cite[p.\ 9]{Osband1985}; see \cite[Theorem 4]{Gneiting2011}.} 
\citet[Sections 6.2 and 9.3]{GneitingRaftery2007} consider consistent exhaustive scores for the central $\a$-prediction interval or `equal-tailed' $\a$-prediction intervals; cf.\ \cite{Greenberg2018} for a discussion of these scores and \cite{BracherETAL2020} for a timely application of interval forecasts in the context of epidemiology. This basically amounts to a prediction for a pair of quantiles at the $(1-\a)/2$- and $(1-(1-\a)/2)$-level. If one fixes a certain coverage of, say, $\a$, this ansatz can be generalised to construct consistent scoring functions for a non-central $\alpha$-prediction interval of which the endpoints are specified in terms of quantiles at level $\beta$ and $\beta + \alpha$, where $\beta\in(0, 1-\alpha)$.
\cite{SchlagWeele2015} also consider exhaustive scoring functions for interval-valued predictions. However, they start with a certain scoring function of appeal to them and do not thoroughly characterise the functional which is elicited by this scoring function.
See \cite{AskanaziETAL2018} for an overview of interval forecasts, in which, however, mostly impossibility results are presented.

There is typically a whole class of $\alpha$-prediction intervals for $Y$, 
resulting in a collection of subsets of $\R$. In Section \ref{sec:ident_elicitPis} we show that this whole class of $\alpha$-prediction intervals is exhaustively elicitable, subject to sensible conditions on the class of distributions. As a direct consequence of Theorem~\ref{thm:exclusivity}, it is not selectively elicitable.
This fact imposes a substantial challenge to the sound evaluation of single arbitrary $\alpha$-prediction intervals without imposing any further restrictions.
On the other hand, imposing such further restrictions, it is well known that an $\alpha$-prediction interval given by two quantiles as its endpoints can be elicited due to the elicitability of the individual quantiles, if the quantiles are singletons on the respective class of probability distributions. Such an interval is a particular specification of the class of $\alpha$-prediction intervals, and one might wonder about the elicitability (identifiability) of other specifications.
In the second part of this section, we discuss the elicitability and identifiability of several specifications of the class of $\alpha$-prediction intervals, with largely negative results.

Our results are nicely complemented by the very recent and independently developed preprint \cite{BrehmerGneiting2020}. They essentially study the subclass of $\a$-prediction intervals with exact coverage $\a$, and show that this subclass fails to be selectively elicitable; see Remark \ref{rem:comparison} for details. Furthermore, they establish properties on homogeneous and translation invariant scores for the central $\a$-prediction interval (or `equal-tailed' $\a$-prediction interval) and show some complementary impossibility results on the shortest $\a$-prediction interval.

\subsection{Notation}

Let $\M_0$ be the class of Borel probability distributions on $\R$ where we deliberately overload notation and identify the corresponding Borel measures with their cumulative distribution functions. Let $\bar U := \{(a,b)^\intercal \in \bar \R^2\,|\,a\le b\}$, where $\bar\R\colon =\R\cup\left\{-\infty,\infty\right\}$, and $U := \{(a,b)^\intercal \in \R^2\,|\,a\le b\}$. 
For any $F\in\M_0$ and for any $(a,b)^\intercal \in \bar U$, we set $F([a,b]):= F([a,b]\cap \R) = F(b) - F(a-)$ where $F(b) := F((-\infty,b])$ if $b<\infty$ and $F(\infty) = F(\R) =1$, and $F(a-) := F((-\infty,a))$. 
For $\beta\in[0,1]$ and $F\in\M_0$, recall the definition of the $\beta$-quantile, $q_\beta(F)$, and the lower $\beta$-quantile, $q_{\beta}^-(F)$, of $F$ as
\begin{align*}
q_{\beta}(F) &:= \{t\in\bar \R\,|\,F((-\infty,t))\le \beta \le F((-\infty,t])\}\,,\\
q_{\beta}^-(F) &:= \inf q_{\beta}(F) = \inf\{ t\in\bar \R\,|\,\beta \le F((-\infty,t])\}\,.
\end{align*}
We introduce the following subclasses of $\M_0$: Let $\M_{\text{inc}}$ be the class of 
strictly increasing distribution functions, 
$\M_{\text{cont}}$ the class of continuous distributions, and $\M_{\text{inc,cont}}:= \M_{\text{inc}}\cap \M_{\text{cont}}$.
Naturally, the fact that $F\in\M_{\text{inc}}$ implies that the support of $F$ is whole $\R$. However, to allow for the treatment of distributions with bounded support, we define the class of \emph{$\a$-pseudo-increasing distributions} $\M_{\a,\text{inc}}$ for $\a\in(0,1]$. 
For any $F\in\M_0$ we say that $F\in\M_{\a,\text{inc}}$ if and only if 
\begin{gather}
\# q_{\a}(F) = \# q_{1-\a}(F)=1, \quad \text{and}\\ 
 \forall a\in \R \ \text{s.t.} \ F(a-)<1-\a: 
\#q_{\a+F(a-)}(F) >1  \implies  q_{F(a-)}(F) = \{a\},
\end{gather}
where the notation $\# A$ denotes the cardinality of a set $A$. This means that $F\in\M_{\a,\text{inc}}$ if and only if its $\a$- and $(1-\a)$-quantiles are singletons, and if for any $\beta\in(0,1-\alpha)$ the $\beta$-quantile or the $(\beta+\a)$-quantile is a singleton.

For any $\a\in(0,1]$ and upon identifying any non-empty interval $[a,b]\subseteq\R$ with the vector of its endpoints $(a,b)^\intercal \in \bar U$,
we formally introduce the class of $\a$-prediction intervals for a distribution $F\in\M_0$ as \be{eq:I_a}
\I_\a(F): = \{(a,b)^\intercal \in \bar U \,|\,F([a,b])\ge\a\}\,.
\ee
Clearly, $(a,b)^\intercal \in \I_\a(F)$ implies that $(a-x_1,b+x_2)^\intercal \in \I_\a(F)$ for all $x_1, x_2\ge0$. That is, $\I_\a(F)$ is an upper set with respect to the ordering cone $C := (-\infty, 0]\times[0,\infty)$. Moreover, $\I_\a(F)$ is non-empty since $F(\R) =1 \ge\a$, implying that $(-\infty, \infty)^\intercal\in\I_\a(F)$.
Therefore, we introduce the natural maximal exhaustive action domain for reports for $\I_\a$ as 
\[
\mathcal U = \{\emptyset\neq A \subseteq \bar U\,|\, A = A+C\}\,,
\]
with the usual definition of the Minkowski sum.
Moreover, for any $F\in\M_0$, we introduce the following functions closely connected to $\I_\a(F)$: First, the function
\begin{gather}
\nonumber
\Gamma_\a(F) \colon \{a\in\bar \R\,|\,F(a-)\le 1-\a\} \to (-\infty,\infty], \\ \label{eq:u}
a\mapsto \Gamma_\a(F)(a) = \inf\{b\ge a\,|\,F([a,b])\ge\a\} = q^-_{\a+F(a-)}(F)
\end{gather}
gives the upper endpoint of the shortest $\a$-prediction interval with lower endpoint $a$; see Figure~\ref{fig:Gamma} for an illustration. 
\begin{figure}
\centering
\includegraphics[width = 0.7\textwidth]{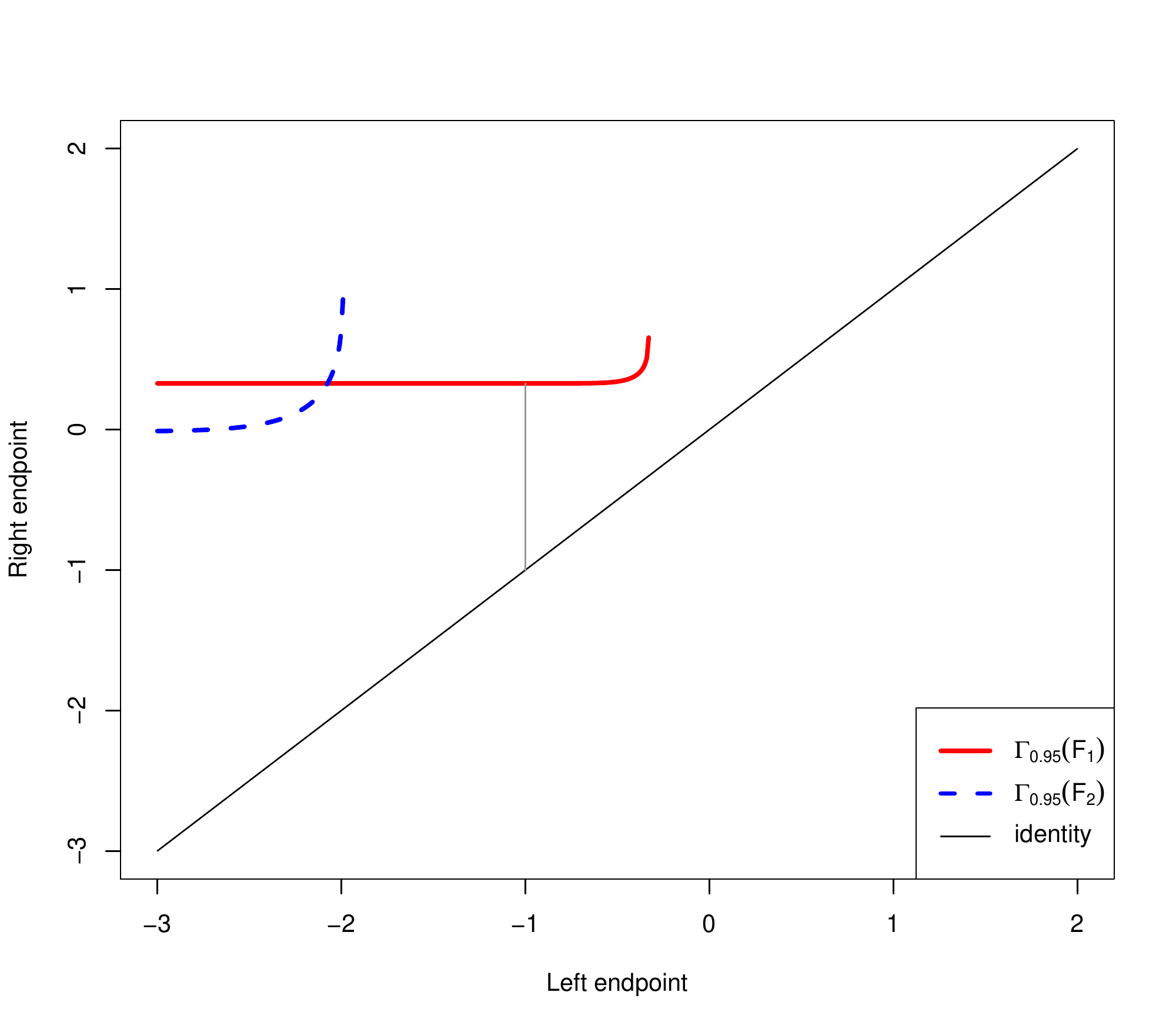}
\caption{Graphs of $\Gamma_{0.95}(F_1)$ (red solid), $F_1 = \mathcal N(0,0.2^2)$, and $\Gamma_{0.95}(F_2)$ (blue dashed), $F_2 = \mathcal N(-1,0.6^2)$. In light grey, the shortest $0.95$-prediction interval of $F_1$ with lower endpoint $-1$ is depicted.
}
\label{fig:Gamma}
\end{figure}
\begin{lem}\label{lem:Gamma}
\begin{enumerate}[\rm (i)]
\item
For all $F\in\M_0$ and for all $a\in\bar \R$ such that $F(a-)\le1-\a$ it holds that $F\big([a,\Gamma_\a(F)(a)]\big)\ge\a$.
\item
For all $F\in\M_0$ the function $\Gamma_\a(F)$ is increasing and left-continuous.
\item
For $F\in\M_{\text{\rm inc,\,cont}}$ the function $\Gamma_\a(F)$ is strictly increasing and continuous.
\item
For $F\in\M_{\a,\,\text{\rm inc}}$ it holds that $\lim_{a\to-\infty} \Gamma_{\a}(F)(a) = q^-_{\a}(F)$, and $\Gamma_\a(F)^{-1}(\{\infty\}) \in \{\emptyset, q_{1-\a}(F)\}$, which means that $\infty$ is attained at most once.
\end{enumerate}
\end{lem}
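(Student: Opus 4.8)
The plan is to verify the four claims about $\Gamma_\a(F)(a) = \inf\{b\ge a\,|\,F([a,b])\ge\a\} = q^-_{\a+F(a-)}(F)$ in turn, leaning heavily on the quantile identity already recorded in the definition \eqref{eq:u}. First, for item (i), I would fix $F\in\M_0$ and $a\in\bar\R$ with $F(a-)\le 1-\a$, so that $\a + F(a-)\le 1$ and $q^-_{\a+F(a-)}(F)$ is a well-defined (possibly infinite) lower quantile. Writing $b^* := \Gamma_\a(F)(a) = q^-_{\a+F(a-)}(F) = \inf\{t\in\bar\R\,|\,\a+F(a-)\le F((-\infty,t])\}$, right-continuity of $t\mapsto F((-\infty,t])$ gives $F((-\infty,b^*])\ge \a+F(a-)$ (treating the case $b^*=\infty$ via $F(\R)=1\ge\a+F(a-)$ separately, which is immediate). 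Also $b^*\ge a$ since for any $t<a$ we have $F((-\infty,t])\le F(a-)<\a+F(a-)$. Hence $F([a,b^*]) = F((-\infty,b^*]) - F(a-)\ge \a$, which is the claim.

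For item (ii), monotonicity of $a\mapsto\Gamma_\a(F)(a)$ follows because $F(a-)$ is nondecreasing in $a$, so $\a+F(a-)$ is nondecreasing, and the lower-quantile map $\beta\mapsto q^-_\beta(F)$ is nondecreasing in $\beta$ (larger threshold $\beta$ can only push the infimum up); I would spell this out from the $\inf$-definition. For left-continuity, I would fix $a$ in the domain and a sequence $a_n\uparrow a$, and use that $F(a_n-)\uparrow F(a-)$ (left-continuity of the left-limit function $a\mapsto F(a-)$, i.e.\ $\sup_{t<a}F((-\infty,t]) = F((-\infty,a)) = F(a-)$) together with continuity-from-the-left of $\beta\mapsto q^-_\beta(F)$, which holds because $q^-_\beta(F) = \inf\{t\,|\,\beta\le F((-\infty,t])\}$ and a value $t$ works for all $\beta'<\beta$ iff it works in the limit after taking a further $\inf$; combining, $\Gamma_\a(F)(a_n) = q^-_{\a+F(a_n-)}(F)\uparrow q^-_{\a+F(a-)}(F) = \Gamma_\a(F)(a)$. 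I should be a little careful that $q^-$ is genuinely left-continuous in its index and not merely lower semicontinuous; the cleanest route is to show $\lim_n q^-_{\beta_n}(F) = q^-_\beta(F)$ for $\beta_n\uparrow\beta$ directly: ``$\le$'' follows from monotonicity, and ``$\ge$'' because if $t<\lim_n q^-_{\beta_n}(F)$ then $F((-\infty,t])<\beta_n$ for all $n$, so $F((-\infty,t])\le\lim_n\beta_n = \beta$ is not quite enough — one needs strict inequality, so one instead argues at $t' $ slightly less, or notes that $F((-\infty,t])<\beta_n$ for large $n$ forces $F((-\infty,t])\le\beta_n$ hence in the limit $<\beta$ fails only on a countable set, which one sidesteps by working with the $\inf$ directly. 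This is the one spot that needs genuine care.

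For item (iii), when $F\in\M_{\text{inc,cont}}$ the map $t\mapsto F((-\infty,t])$ is a continuous strictly increasing bijection $\bar\R\to[0,1]$, so $q^-_\beta(F) = F^{-1}(\beta)$ for $\beta\in(0,1)$ is continuous and strictly increasing in $\beta$, and moreover $F(a-)=F(a)$ with $a\mapsto F(a)$ continuous strictly increasing; composing, $\Gamma_\a(F)(a) = F^{-1}(\a + F(a))$ is continuous and strictly increasing on its domain $\{a\,:\,F(a)\le 1-\a\}$. For item (iv), with $F\in\M_{\a,\text{inc}}$: as $a\to-\infty$, $F(a-)\downarrow 0$, so $\a+F(a-)\downarrow\a$, and by the left-continuity/monotonicity established above (now applied on the decreasing side, using $\#q_\a(F)=1$ so that $q^-_\a = q_\a$ and there is no jump discontinuity of $q^-$ at $\beta=\a$) we get $\Gamma_\a(F)(a)\to q^-_\a(F)$; the singleton assumption on $q_\a(F)$ is exactly what rules out a jump here. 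For the last claim, $\Gamma_\a(F)(a)=\infty$ means $q^-_{\a+F(a-)}(F)=\infty$, i.e.\ $F((-\infty,t])<\a+F(a-)$ for all finite $t$, equivalently $1\le\a+F(a-)$, i.e.\ $F(a-)\ge 1-\a$; combined with the domain restriction $F(a-)\le 1-\a$ this forces $F(a-)=1-\a$, i.e.\ $a\in q_{1-\a}(F)$, and since $\#q_{1-\a}(F)=1$ this is attained at most once — so $\Gamma_\a(F)^{-1}(\{\infty\})$ is either empty or equals $q_{1-\a}(F)$. The main obstacle throughout is not conceptual but the bookkeeping around $\pm\infty$ and the precise continuity properties of $\beta\mapsto q^-_\beta(F)$ at the endpoints $\a$ and $1-\a$, where the defining properties of $\M_{\a,\text{inc}}$ (singleton quantiles) are doing the real work.
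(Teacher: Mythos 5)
Your proposal follows the paper's route — the quantile identity $\Gamma_\a(F)(a) = q^-_{\a+F(a-)}(F)$ together with monotonicity, left-continuity, and (for (iii)/(iv)) continuity of $a\mapsto F(a-)$ and $\beta\mapsto q^-_\beta(F)$ — and the paper's own proof is essentially a one-sentence pointer per item, so you are filling in the same reductions. Items (i) and (iii) are fine. Two points in (ii) and (iv) need correcting.

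The doubt you register in (ii) about left-continuity of $\beta\mapsto q^-_\beta(F)$ is unfounded; your own sketch closes once tightened. With $\beta_n\uparrow\beta$ strictly and $c := \lim_n q^-_{\beta_n}(F)$, monotonicity gives $c\le q^-_\beta(F)$. For ``$\ge$'', fix $t<c$; since $q^-_{\beta_n}(F)\uparrow c$ there is \emph{some} (large) $n$ with $t<q^-_{\beta_n}(F)$, and hence $F((-\infty,t])<\beta_n<\beta$. The strict inequality $F((-\infty,t])<\beta$ comes from $\beta_n<\beta$, not from passing to the limit, so it does not evaporate. Therefore $\{s\,|\,F((-\infty,s])\ge\beta\}\subseteq[c,\infty]$, giving $q^-_\beta(F)\ge c$. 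Your written version has ``for all $n$'' where it should be ``for some $n$'' — that misstatement is what made it look as though you would only get $\le$ — and the fallback sketches (the ``$t'$ slightly less'', the ``countable set'') are unnecessary and not really coherent.

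In (iv), ``equivalently $1\le\a+F(a-)$'' overstates the relationship: if $\a+F(a-)=1$ but $F$ has support bounded from above, then $q^-_1(F)<\infty$ and $\Gamma_\a(F)(a)$ is finite. The paper is careful on exactly this point, noting that $\Gamma_\a(F)(q^-_{1-\a}(F))=q^-_1(F)=\infty$ \emph{if and only if} the support of $F$ is unbounded above. You only use the one-sided implication $\Gamma_\a(F)(a)=\infty\Rightarrow F(a-)=1-\a$, which is correct, and combined with $\#q_{1-\a}(F)=1$ that delivers $\Gamma_\a(F)^{-1}(\{\infty\})\in\{\emptyset,q_{1-\a}(F)\}$; so your conclusion stands, but the ``equivalently'' should be removed.
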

\begin{proof}
(i) follows from the right continuity of $F$. (ii) is implied by the fact that the functions $a\mapsto F(a-)$ and $\beta\mapsto q_{\beta}^-(F)$ are increasing and left-continuous. For (iii) recall that for $F\in\M_{\text{\rm inc,\,cont}}$, both $a\mapsto F(a-)$ and $\beta\mapsto q_{\beta}^-(F)$ are strictly increasing and continuous. Finally, for (iv) note that for $F\in\M_{\a,\,\text{\rm inc}}$, $\beta\mapsto q_{\beta}^-(F)$ is continuous at $\a$ and $1-\a$. Since $\lim_{a\to-\infty}F(a-) = 0$, the first assertion follows. For the latter, if there is no $a\in\R$ such that $F(a-)=1-\a$, the function $\Gamma_\a(F)$ is clearly finite. If there is some $a_0\in\R$ such that $F(a_0-) = 1-\a$ then $a_0\in q_{1-\a}(F) = \{q^-_{1-\a}(F)\}$ such that it is unique. $\Gamma_\a(F)(q^-_{1-\a}(F)) = q^-_1(F) = \infty$ if and only if the support of $F$ is unbounded from above. As $q_{1-\a}(F)$ is a singleton, for all $b<a_0$ it holds that $F(b-)<F(a_0-)$. Then $\a+F(b-)<1$ and therefore $\Gamma_\a(F)(b)$, being the lower $(\a+F(b-))$-quantile of $F$, is finite.
\end{proof}
\noindent
Similarly to $\Gamma_\a(F)$, we introduce for $F\in\M_0$
\begin{gather}
 \label{eq:d}
d_\a(F) \colon \R \to [0,\infty], \qquad
m\mapsto d_\a(F)(m) = \inf\{c\ge 0\,|\,F([m-c,m+c])\ge\a\}\,,
\end{gather}
which gives (half of) the length of the shortest $\a$-prediction interval of $F$, centred at $m$. Again, a continuity argument yields that the infimum is attained such that $F\big([m-d_\a(F)(m), m+d_\a(F)(m)]\big)\ge\a$. Moreover, for $\a<1$, $d_\a(F)$ takes finite values only.

Note that $\I_\a(F)$ corresponds to the epigraph of $\Gamma_\a(F)$, given by
\[
\epi \Gamma_\a(F) := \{(a,b)^\intercal \in \bar U \,|\,  b\ge \Gamma_\a(F)(a), \ F(a-)\le 1-\a\}, \quad F\in\M_0\,,
\]
which also guarantees the (Borel-) measurability of the set $\I_\a(F)$.
We also introduce the graph of $\Gamma_\a(F)$ as 
\[
\graph \Gamma_\a(F) := \big\{\big(a,\Gamma_\a(F)(a)\big)^\intercal \in  \bar U \,|\, F(a-)\le 1-\a\big\}, \quad F\in\M_0\,.
\]
Finally, we introduce the subclass $\mathcal U^*\subseteq\mathcal U$ of sets which can be written in form of epigraphs of left-continuous functions $\gamma\colon [-\infty, b] \to (-\infty, \infty]$, for some $b\in\R$ such that $\gamma^{-1}(\{\infty\})\in\{\emptyset, \{b\}\}$ and such that $\lim_{a\to-\infty}\gamma(a) = \gamma(-\infty)$.

\subsection{Elicitability and identifiability of the class of $\alpha$-prediction intervals}\label{sec:ident_elicitPis}

One of the main results of this paper is as follows.

\begin{thm}\label{thm:intervals_class}
For $\alpha\in(0,1]$ the following assertions hold:
\begin{enumerate}[\rm (i)]
\item The functional $F\mapsto \graph \Gamma_\alpha(F)$ is selectively identifiable on $\M_{\text{\rm inc,\,cont}}$ with the strict selective $\M_{\text{\rm inc,\,cont}}$-identification function 
\[
\Vs\colon U\times\R \to\R ,\quad (x,y)\mapsto\Vs(x,y)=\one\{y\in [x_1,x_2]\}-\alpha.
\]
Moreover in $\M_0$, $\Vs$ is still a selective $\M_0$-identification function for $\graph \Gamma_\alpha(\cdot)$ and it is 
oriented in the sense that $\bar V(x,F)\ge0$ if and only if $x\in\I_\a(F)$ for any $F\in\M_0$.

\item Let $\mu$ be a finite, $\sigma$-additive, nonnegative measure on $U$. The function $\Se\colon\mathcal U\times\R\to\R$
\begin{align}\label{eq:exh_score}
\Se(A,y) &= - \int_{A\cap U} \Vs(x,y)\dint\mu(x) 
= \a\mu(A) - \mu\big(((-\infty,y]\times[y,\infty))\cap A\big)
\end{align}
is an $\M_0$-consistent exhaustive scoring function for $\I_\alpha$. 
\item
If additionally $\mu$ is positive on $U$,\footnote{That means any nonempty open subset of $U$ has positive measure under $\mu$.}  then the restriction of $\Se$ to $\mathcal U^*\times \R$ is strictly $\M_{\a,\text{\rm inc}}$-consistent for $\I_\alpha$, rendering the class of $\alpha$-prediction intervals exhaustively elicitable on $\M_{\a,\text{\rm inc}}$. 
\end{enumerate}
\end{thm}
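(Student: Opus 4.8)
\textbf{Proof plan for Theorem~\ref{thm:intervals_class}.}

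The plan is to prove the three parts in order, since each builds on the previous one. For part (i), I would first show that $\bar\Vs(x,F) = \bP(Y\in[x_1,x_2]) - \a = F([x_1,x_2]) - \a$, which is immediate from the definition. The orientation claim $\bar\Vs(x,F)\ge 0 \iff x\in\I_\a(F)$ is then literally the definition of $\I_\a(F)$ in \eqref{eq:I_a}. For the strictness on $\M_{\text{\rm inc,\,cont}}$: I need $\bar\Vs(x,F)=0 \iff x\in\graph\Gamma_\a(F)$, i.e.\ $F([x_1,x_2])=\a \iff x_2 = \Gamma_\a(F)(x_1)$ (with $F(x_1-)\le 1-\a$). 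The direction $(\Leftarrow)$ follows from Lemma~\ref{lem:Gamma}(i) together with continuity of $F$ (equality rather than $\ge$ because $F$ is continuous). The direction $(\Rightarrow)$ uses that for $F\in\M_{\text{\rm inc,\,cont}}$ the map $b\mapsto F([x_1,b])$ is strictly increasing, so there is a unique $b$ with $F([x_1,b])=\a$, which must coincide with $\Gamma_\a(F)(x_1) = q^-_{\a+F(x_1-)}(F)$ by Lemma~\ref{lem:Gamma}(iii); and $F(x_1-)\le 1-\a$ is needed for such a $b$ to exist. The fact that $\Vs$ remains a (non-strict) selective $\M_0$-identification function follows again from Lemma~\ref{lem:Gamma}(i).

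For part (ii), I would compute $\bar\Se(A,F) = -\int_{A\cap U}\bar\Vs(x,F)\dint\mu(x) = \a\mu(A\cap U) - \int_{A\cap U}F([x_1,x_2])\dint\mu(x)$ and, using Fubini (with $\mu$ finite and $F$ a probability measure), rewrite $\int_{A\cap U}F([x_1,x_2])\dint\mu(x) = \mu\big((((-\infty,y]\times[y,\infty))\cap A)\big)$ integrated against $F(\dint y)$, which gives the stated closed form up to the convention $\mu(A)=\mu(A\cap U)$ since $\mu$ lives on $U$. Consistency then follows by the standard mixture/pointwise argument: for any $A\in\mathcal U$ and $B=\I_\a(F)$,
\[
\bar\Se(A,F) - \bar\Se(B,F) = \int_{U}\big(\one_{B}(x) - \one_{A}(x)\big)\,\bar\Vs(x,F)\dint\mu(x),
\]
and on the set $\{x\in B\}=\{\bar\Vs(x,\cdot)\ge 0\}$ (by orientation) the integrand is $\ge 0$ when $x\notin A$, while on $\{x\notin B\} = \{\bar\Vs(x,\cdot)<0\}$ it is $\ge 0$ when $x\in A$; hence the integral is $\ge 0$. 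This is exactly the mixture-of-elementary-scores idea, and orientation from part (i) is what makes it work.

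For part (iii), strict consistency on $\M_{\a,\text{\rm inc}}$ with $\mathcal U^*$-valued reports: I need to show that if $A\in\mathcal U^*$, $A\ne\I_\a(F)$, and $\mu$ is positive on $U$, then the inequality in the display above is strict. The symmetric difference $A\triangle\I_\a(F)$ is nonempty; I would argue it contains a nonempty \emph{open} subset of $U$. This is where the structural description of $\mathcal U^*$ and $\M_{\a,\text{\rm inc}}$ enters: both $A$ and $\I_\a(F) = \epi\Gamma_\a(F)$ are epigraphs of left-continuous functions on an interval $[-\infty,b]$ with at most one $+\infty$ value (Lemma~\ref{lem:Gamma}(ii),(iv)); if two such epigraphs differ, the defining functions differ at some point, and by left-continuity they differ on an interval, whence the symmetric difference of the epigraphs has nonempty interior. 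On that open set the integrand $\big(\one_B - \one_A\big)\bar\Vs(\cdot,F)$ is \emph{strictly} positive: here I use that for $F\in\M_{\a,\text{\rm inc}}$, $\bar\Vs(x,F) = F([x_1,x_2])-\a \ne 0$ off $\graph\Gamma_\a(F)$, and more carefully that $F([x_1,x_2]) > \a$ strictly on the interior of $\I_\a(F)$ and $F([x_1,x_2]) < \a$ off $\I_\a(F)$ — the pseudo-increasing condition is exactly designed so that the relevant quantiles behave like those of a strictly increasing continuous distribution at the levels that matter ($\a$, $1-\a$, and one of $\beta$ or $\beta+\a$). Since $\mu$ is positive on $U$, integrating a strictly positive integrand over a nonempty open set yields a strictly positive integral, so $\bar\Se(A,F) > \bar\Se(\I_\a(F),F)$.

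\textbf{Main obstacle.} The delicate step is part (iii): showing that whenever $A\in\mathcal U^*$ differs from $\I_\a(F)$ their symmetric difference contains a nonempty open set on which $\bar\Vs(\cdot,F)$ has a \emph{strict} sign. This requires carefully combining the epigraph structure of $\mathcal U^*$ (left-continuity, the single possible $\infty$-value, the limit condition at $-\infty$) with the precise regularity of $\Gamma_\a(F)$ for $F\in\M_{\a,\text{\rm inc}}$ from Lemma~\ref{lem:Gamma}, and verifying that the pseudo-increasing hypothesis rules out the flat spots that would otherwise make $F([x_1,x_2])=\a$ on a set of positive $\mu$-measure outside $\graph\Gamma_\a(F)$ or destroy the strict inequality on the interior of $\I_\a(F)$. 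Parts (i) and (ii) are essentially bookkeeping with Fubini and the orientation property.
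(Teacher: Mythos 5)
Parts (i) and (ii) are correct and essentially reproduce the paper's argument --- Lemma~\ref{lem:Gamma} plus a direct computation for (i), and the Fubini/mixture decomposition exploiting the orientation of $\Vs$ for (ii). The gap is in part (iii). Your claim that $\bar\Vs(x,F)\neq 0$ off $\graph\Gamma_\a(F)$ when $F\in\M_{\a,\text{\rm inc}}$, and that the pseudo-increasing condition ``rules out the flat spots,'' is false: that condition explicitly \emph{permits} a flat spot of $F$ at the level $\a+F(a-)$ and only forces $q_{F(a-)}(F)$ to be a singleton there. With $\a=0.3$, take $F$ with uniform density of total mass $0.5$ on $[0,1]$, no mass on $[1,2]$, and uniform density of total mass $0.5$ on $[2,3]$; then $F\in\M_{\a,\text{\rm inc}}$, yet $\bar\Vs\big((0.4,x_2)^\intercal,F\big)=F(x_2)-0.5=0$ for every $x_2\in[1,2]$, an entire vertical segment strictly above $\Gamma_\a(F)(0.4)=1$ and hence off the graph. (What \emph{is} true, as you also state, is $\bar\Vs>0$ on $\interior(\I_\a(F))\cap U$; but those segments live in $\partial\I_\a(F)\setminus\graph\Gamma_\a(F)$, so the nonempty open subset of $(A\triangle\I_\a(F))\cap U$ you produce may touch $\partial\I_\a(F)$, and you cannot conclude strict positivity of the integrand on all of it without a further excision step.)

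The paper's appendix proof does not try to sign $\bar\Vs$ on the interior of $\I_\a(F)$ at all. After establishing $\interior((A\triangle A^*)\cap U)\neq\emptyset$ (with $A^*=\I_\a(F)$), it either lands in $A\setminus A^*$, where $\bar\Vs<0$ pointwise, or it takes a point $(x_1,x_2)^\intercal\in\interior((A^*\setminus A)\cap U)$ with $\bar\Vs(x_1,x_2,F)=0$, deduces $[\Gamma_\a(F)(x_1),x_2]\subseteq q_{\a+F(x_1-)}(F)$, applies the pseudo-increasing condition to conclude $q_{F(x_1-)}(F)=\{x_1\}$, and thereby produces an explicit open rectangle $(x_1-\delta,x_1)\times(\Gamma_\a(F)(x_1),x_2)\subseteq(A^*\setminus A)\cap U$ on which $\bar\Vs>0$ strictly, because $F(z_1-)<F(x_1-)$ for $z_1<x_1$. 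That explicit rectangle construction --- or, equivalently, passing to the nonempty open set $O\setminus\partial\I_\a(F)$ and then proving $\bar\Vs>0$ on $\interior(\I_\a(F))$ from the pseudo-increasing condition --- is precisely what your plan is missing.
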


\begin{proof}
Part (i) follows from Lemma~\ref{lem:Gamma} and standard arguments.
For part (ii) let $F\in\M_0$, $A^* = \I_\a(F)$ and $A\in\mathcal U$. Then, using a Fubini argument, we obtain
\begin{align}\label{eq:proof}
&\Se(A,F)-\Se(A^*,F)=\int\limits_{(A^* \setminus A)\cap U}\bar{V}_{\text{sel}}(x,F)\dint\mu(x)-\int\limits_{(A \setminus A^*)\cap U}\bar{V}_{\text{sel}}(x,F)\dint\mu(x)\ge0.
\end{align}
The inequality is easily established by recalling that $\bar{V}_{\text{sel}}(x,F)\ge0$ if and only if $x\in A^*$. 
The proof of part (iii) is deferred to 
Appendix \ref{subsec:proofs}.
\end{proof}

Note that in part (iii), the fact that $q_{F(x_1-)}(F)$ is a singleton whenever $\#q_{\a+F(x_1-)}(F)>1$ plays an important role. If this were not the case, we would obtain rectangles of points $x\in\I_\a(F)$ with $\bar V_{\text{sel}} (x,F)=0$ with positive measure under $\mu$---namely $q_{F(x_1-)}(F)\times q_{\a+F(x_1-)}(F)$. This would mean that our exhaustive scoring function fails to distinguish between the correct forecast and one that does not contain some of the points within this rectangle. The reasoning behind why the $\a$ and $(1-\a)$ quantiles are required to be singletons is similar.\\
For $\a=1$, note that $\M_{\a,\text{\rm inc}}$ only contains distributions with support $\R$. But in that case, $\I_\a$ is constant, namely $\{\R\}$ for all distributions, and thus not interesting. We will therefore exclude the case $\a=1$ from further discussion.

\begin{rem}
Imposing the normalisation condition $S(A,y)\ge0$ with equality if and only if $A= \I_\a(\delta_y) = [-\infty,y]\times[y,\infty]$, it is straightforward to construct a score equivalent to the one in \eqref{eq:exh_score} given by 
\be{eq:S normalised}
\begin{split}
 S_\mu(A,y) &= \Se(A,y) - \Se([-\infty,y]\times[y,\infty],y) \\
&= (1-\a)\mu\big((-\infty,y]\times[y,\infty)\big) +\a\mu(A) - \mu\big(((-\infty,y]\times[y,\infty))\cap A\big)\\
&= (1-\a)\mu\big(((-\infty,y]\times[y,\infty))\setminus A\big) +\a\mu(A\setminus ((-\infty,y]\times[y,\infty))\big) .
\end{split}
\ee
From this stage, one can easily construct a family of elementary scores, $S_u = S_{\delta_u}$, $u\in U$, given by \eqref{eq:S normalised}. 
As a consequence of Theorem~\ref{thm:intervals_class}, these elementary scores are $\M_{\a,\,\text{inc}}$-consistent for $\I_\a$. Clearly, $S_\mu(A,y)=\int S_u(A,y)\mu(\diff u)$ which is a mixture representation in the spirit of \cite{EhmETAL2016}. 
This opens the way to the powerful tool of Murphy diagrams $u \mapsto S_{u}(A,y)$ discussed there as well. In order to avoid the necessity of choosing a measure $\mu$, one instead considers the elementary scores in \eqref{eq:S normalised} over different values of the parameter $u\in U$. In the one-dimensional case discussed in \cite{EhmETAL2016} as well as in the case of the class of $\a$-prediction intervals, one can easily visualise the values of the expected score differences graphically. With the possibly increasing dimensionality of the space $u$ comes from, the illustrative accessibility of this approach gets more involved. We discuss an example with possibly higher dimension in Section~\ref{sec:Vorob'ev Quantiles}.
For an illustration of 2-dimensional Murphy diagrams, we refer the reader to \cite{FisslerHlavinovaRudloff_RM}. 
\end{rem}

Intuitively, the class of $\a$-prediction intervals, $\I_\a(F)$, of a distribution $F$ contains a great deal of information about $F$ itself. So one might wonder if it is possible to recover $F$, knowing $\I_\a(F)$. If so, this would mean that $\I_\a$ actually constitutes a bijection. And consequently, the exhaustive elicitability of $\I_\a$ would directly follow from the existence of strictly proper scoring rules for probabilisitic forecasts \citep{GneitingRaftery2007}, invoking the \emph{revelation principle} \citep{Osband1985, Gneiting2011}. 
The following proposition asserts that $\I_\a$ is not a bijection, which underlines the novelty of Theorem~\ref{thm:intervals_class}.
\begin{prop}
  \label{prop:Ialpha-not-injective}
For $\a\in(0,1)$ the functional $\mathcal{I}_\alpha$ is not injective on $\M_{\a, \text{\rm inc}}$.
\end{prop}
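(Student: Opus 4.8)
The plan is to exhibit two distinct distributions $F, G \in \M_{\a,\text{\rm inc}}$ with $\I_\a(F) = \I_\a(G)$. Since $\I_\a(F)$ is completely determined by the function $\Gamma_\a(F)$ (indeed $\I_\a(F) = \epi \Gamma_\a(F)$), it suffices to find $F \neq G$ with $\Gamma_\a(F) = \Gamma_\a(G)$ as functions. Recalling \eqref{eq:u}, $\Gamma_\a(F)(a) = q^-_{\a + F(a-)}(F)$, so the task is to build two different continuous, strictly increasing distribution functions whose associated ``shortest-interval upper-endpoint'' maps coincide everywhere.

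First I would pin down what $\Gamma_\a(F) = \Gamma_\a(G)$ means concretely. For $F \in \M_{\text{\rm inc,cont}}$, writing $a \mapsto F(a)$ as a strictly increasing continuous bijection $\R \to (0,1)$, the condition $b = \Gamma_\a(F)(a)$ is equivalent to $F(b) - F(a) = \a$. So $\I_\a(F) = \I_\a(G)$ amounts to: for all $a < b$, $F(b) - F(a) = \a \iff G(b) - G(a) = \a$, i.e. the two distributions have the same family of ``$\a$-spaced pairs''. The natural construction is to take $G = \phi_\#F$, the pushforward of $F$ under a strictly increasing homeomorphism $\phi\colon\R\to\R$ chosen so that $\phi$ maps $\a$-spaced pairs of $F$ to $\a$-spaced pairs of $F$. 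Concretely, if we let $H = F$ and think of everything on the level of the ``quantile lattice'', we want a shift-type symmetry: pick any $F$ and let $\phi$ be the map that advances each point by the $\a$-quantile increment. Even more simply, one can work directly on $[0,1]$: let $F$ have quantile function $q = q^-_\cdot(F)\colon(0,1)\to\R$, and define $G$ via the quantile function $q \circ \psi$ for a strictly increasing bijection $\psi\colon(0,1)\to(0,1)$ satisfying $\psi(u+\a) = \psi(u) + \a$ whenever $u, u+\a \in (0,1)$ — i.e. $\psi$ commutes with translation by $\a$ on its domain. Such $\psi$ exist and are not the identity (e.g. on $(0,1)$ with $\a$ irrational one has lots of freedom; for rational $\a = p/n$ in lowest terms one can perturb $\psi$ freely on a fundamental domain of length $\a$ and extend by the relation, taking care at the boundary). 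Then $G \neq F$ in general while the $\a$-spaced-pair structure is preserved, giving $\I_\a(F) = \I_\a(G)$.

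The concrete clean choice I would actually write down: fix $\a \in (0,1)$, and construct a density $f$ on $\R$ that is \emph{not} translation-$c$-periodic-like, together with a ``reshuffling'' argument — but the cleanest is probably an explicit pair. Take $F$ with density symmetric and unimodal (say $\mathcal N(0,1)$ restricted appropriately or a smooth compactly-controlled bump), and let $\phi$ be the reflection $x \mapsto -x$: then $G(x) = 1 - F(-x)$, and $G(b) - G(a) = F(-a) - F(-b)$, so $F(b)-F(a) = \a \iff F(-a) - F(-b) = \a$, hence $\I_\a(F) = \I_\a(G)$; and $G \neq F$ whenever $F$ is not symmetric about $0$. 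One must only check $F, G \in \M_{\a,\text{\rm inc}}$, which is automatic for $F \in \M_{\text{\rm inc,cont}}$ since then all quantiles are singletons, and $\M_{\text{\rm inc,cont}} \subseteq \M_{\a,\text{\rm inc}}$. So the proof reduces to: \emph{choose any asymmetric $F \in \M_{\text{\rm inc,cont}}$, set $G(x) = 1 - F(-x)$, verify $G \in \M_{\text{\rm inc,cont}} \subseteq \M_{\a,\text{\rm inc}}$, note $F \neq G$, and check $\I_\a(F) = \I_\a(G)$ from the displayed equivalence.}

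The main obstacle — really the only nontrivial point — is the membership check $\M_{\text{\rm inc,cont}} \subseteq \M_{\a,\text{\rm inc}}$ and making sure the chosen $F$ genuinely lies in $\M_{\text{\rm inc,cont}}$ (strictly increasing and continuous), so that the clean equivalence ``$b = \Gamma_\a(F)(a) \iff F(b)-F(a)=\a$'' is valid with all quantiles singletons; this is where Lemma~\ref{lem:Gamma}(iii) is invoked. A secondary subtlety, if one instead wants an example with bounded support to emphasize genuineness, is verifying the two defining conditions of $\M_{\a,\text{\rm inc}}$ directly, but using $\M_{\text{\rm inc,cont}}$ sidesteps that entirely. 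I would therefore present the reflection example on an asymmetric fully-supported continuous strictly increasing $F$, which makes every verification a one-line consequence of the definitions and Lemma~\ref{lem:Gamma}.
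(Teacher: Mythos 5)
Your concrete construction is wrong, and it fails at precisely the step you present as the clean finish. From $G(x)=1-F(-x)$ you correctly compute $G(b)-G(a)=F(-a)-F(-b)$. But from this all that follows is
\[
(a,b)^\intercal\in\I_\alpha(G)\ \iff\ F(-a)-F(-b)\ge\alpha\ \iff\ (-b,-a)^\intercal\in\I_\alpha(F),
\]
so $\I_\alpha(G)$ is the image of $\I_\alpha(F)$ under the reflection $(a,b)\mapsto(-b,-a)$ of the action domain, not $\I_\alpha(F)$ itself. The asserted equivalence ``$F(b)-F(a)=\alpha\iff F(-a)-F(-b)=\alpha$'' is simply false for asymmetric $F$: take $F=\mathcal N(5,1)$ and $\alpha=1/2$; then $\I_\alpha(F)$ consists of intervals near $5$ while $\I_\alpha(G)$ consists of intervals near $-5$. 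And if you make $F$ symmetric (with center $\mu$) to try to save the argument, then $G(x)=F(x+2\mu)$, a translate, whose prediction-interval set is the translate of $\I_\alpha(F)$---again not equal unless $\mu=0$, in which case $G=F$. So there is no asymmetric $F$ for which the reflection gives two distinct distributions with the same $\I_\alpha$.

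Your earlier, less concrete sketch is the right one, and in fact it is essentially the paper's approach: you need $F\ne G$ for which the $\alpha$-spaced-pair structure is identical, equivalently, in your quantile-reparametrization language, a strictly increasing homeomorphism $\psi$ of $(0,1)$ commuting with translation by $\alpha$ on $(0,1-\alpha)$, so that $G=\psi^{-1}\circ F$. The paper realizes exactly this symmetry at the density level rather than the quantile level, working on $[0,1]$: in the case $\alpha=1/n$ it takes densities $f_a(y)=1-a\cos(2\pi ny)$, and for $1\%\alpha=\beta>0$ it takes piecewise-constant densities raised/lowered according to $y\%\alpha\lessgtr\beta$; in both cases one checks directly that $F_a([x,x+\alpha])=\alpha$ for all $x\in[0,1-\alpha]$, so $\I_\alpha(F_a)$ does not depend on $a$, while the $F_a$ are pairwise distinct. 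If you keep your $\psi$-construction, you must still handle the boundary constraint carefully (for $\alpha$ with $1\%\alpha=\beta>0$ you need $\psi(\beta)=\beta$, and then $\psi$ on a fundamental domain of length $\alpha$ determines the rest), and you must verify $G\in\M_{\alpha,\mathrm{inc}}$, which your observation $\M_{\mathrm{inc,cont}}\subseteq\M_{\alpha,\mathrm{inc}}$ handles. But as written, the proof rests on the reflection example, and that example does not work.
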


\begin{proof}
  For $a,b\in\R$, $a,b>0$, define $a \% b := a - b \lfloor a/b \rfloor$, the real analog of the modulus.

  If $1 \% \alpha = 0$, then $\alpha = 1/n$ for some integer $n$, and for any $a \in [0,1]$ define the distributions $F_a \in \M_{\a, \text{\rm inc}}$ with density $f_a(y) = 1 - a \cos(2\pi ny)$ for $y\in[0,1]$ and $f_a(y) = 0$ otherwise.
  For all $a\in[0,1]$ and $x\in[0,1-\alpha]$, observe that $\int_x^{x+\alpha} f_a(y) \diff y = \alpha - a\int_x^{x+1/n} \cos(2\pi ny) \diff y = \alpha$.
  Hence, we have $\I_\a(F_a) = \{(x,x+\alpha)^\intercal : x\in[0,1-\alpha]\} \cup \{(z,\alpha)^\intercal:z\leq 0\} \cup \{(1-\alpha,z)^\intercal:z\geq 1\}$ for all $a\in[0,1]$, violating injectivity.

  Otherwise, let $\beta = 1 \% \alpha > 0$.
  For all $0 \leq a \leq \beta/(\alpha-\beta)$, define the probability density
  \begin{equation*}
    \label{eq:1}
    f_a(y) =
    \begin{cases}
      0 & y \notin [0,1]
      \\
      1-a(\alpha/\beta-1) & y \% \alpha \leq \beta
      \\
      1+a & y \% \alpha > \beta.
    \end{cases}
  \end{equation*}
  Thus, $f_0$ is the uniform density on $[0,1]$, and for $a>0$, $f_a$ raises and lowers the density according to where $y$ falls modulo $\alpha$.
  Letting $F_a \in \M_{\a, \text{\rm inc}}$ be the corresponding probability measure, we will show that $\I_\a(F_a) = \I_\a(F_0)$ for all $a > 0$.

  We again see that $\I_\a(F_0) = \{(x,x+\alpha)^\intercal : x\in[0,1-\alpha]\} \cup \{(z,\alpha)^\intercal:z\leq 0\} \cup \{(1-\alpha,z)^\intercal:z\geq 1\}$.
  For $F_a$, note that the Lebesgue measure of the set $\{y\in[x,x+\alpha]:y\%\alpha \leq \beta\}$ is exactly $\beta$ for all $x$.
  Thus, when $x \in [0,1-\alpha]$, we have $F_a([x,x+\alpha])
  = \beta(1-a(\alpha/\beta-1)) + (\alpha-\beta)(1+a)
  = \beta - a(\alpha-\beta) + (\alpha-\beta) + a(\alpha-\beta)
  = \alpha$, as desired.
  The cases $[z,\alpha]$ and $[1-\alpha,z]$ follow immediately.
\end{proof}

\begin{rem}
Variants of prediction intervals other than connected intervals might also be natural to consider, e.g., wrapped intervals (allowing intervals of the form $(-\infty,b] \cup [a,\infty)$ where $b<a$), unions of intervals, and most generally, any measurable prediction set.
  In Appendix \ref{sec:inject-results-pred}, we show that most of these generalisations are indeed bijective with $F$. That means their exhaustive elicitability follows directly from the existence of strictly proper scoring rules for probabilisitic reports and the revelation principle. 
  One exception is the case of wrapped intervals when $\alpha$ is rational, as the construction in the first case of Proposition~\ref{prop:Ialpha-not-injective} applies, and injectivity fails.
  (When $\alpha$ is irrational, repeatedly wrapping intervals corresponds to an irrational rotation, from which one can compute a dense set of quantiles such that one can again invoke the revelation principle to obtain exhaustive elicitability.)
  We claim that the class of wrapped prediction intervals with a rational $\alpha$ is exhaustively elicitable under mild assumptions on the underlying class of distributions, using a similar integral construction as the one in Theorem~\ref{thm:intervals_class}.
\end{rem}

In order to use Theorems~\ref{thm:exclusivity} and \ref{thm:intervals_class} to conclude that $\I_\a$ is not selectively elicitable on $\M_{\a, \text{inc}}$ it is essential to show that $\I_\a$ satisfies the proper-subset property on $\M_{\a, \text{inc}}$.
\begin{lem}\label{lem:subset example}
$\I_\a$ satisfies the proper-subset property on $\M_{\a, \text{\rm inc}}$ for $\a\in(0,1)$.
\end{lem}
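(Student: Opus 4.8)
The plan is to realise the proper-subset property for $\I_\a$ by exhibiting two members of $\M_{\a,\text{inc}}$: the ``smaller'' distribution will be the uniform law $G$ on $[0,1]$, and the ``larger'' one a slightly more concentrated distribution $F$ supported on $[0,1]$. The guiding observation is that $\I_\a(G)$ is exactly the collection of $(a,b)^\intercal\in\bar U$ for which $[a,b]\cap[0,1]$ has length at least $\a$; hence, by monotonicity of $F$, the inclusion $\I_\a(G)\subseteq\I_\a(F)$ reduces to the requirement that every window $[x,x+\a]\subseteq[0,1]$ carry $F$-mass at least $\a$ (the half-open members of $\I_\a(G)$ reduce to the cases $x=0$ and $x=1-\a$). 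Writing the quantile function of $F$ as $Q:=F^{-1}=\mathrm{id}+g$ on $[0,1]$ and reparametrising by $u=F(x)$, which is a bijection of $[0,1-\a]$ onto itself as soon as $F(1-\a)=1-\a$, this requirement becomes the single inequality $g(u+\a)\le g(u)$ for all $u\in[0,1-\a]$, and strictness of the inclusion will follow from one $u_0$ with $g(u_0+\a)<g(u_0)$.

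Concretely, I would pick $\epsilon>0$ small enough that $\epsilon\pi/\a<1$ and set $g(v):=0$ for $v\in[0,1-\a]$ and $g(v):=-\epsilon\sin\!\big(\pi(v-1+\a)/\a\big)$ for $v\in[1-\a,1]$. Then $g$ is continuous, $g(0)=g(1)=g(1-\a)=0$ and $g\le 0$, and $Q=\mathrm{id}+g$ is a continuous, strictly increasing bijection of $[0,1]$ onto $[0,1]$ (it is piecewise $C^1$ with derivative $1+g'$, and $|g'|\le\epsilon\pi/\a<1$). Hence $F:=Q^{-1}$ is the distribution function of a probability measure supported on $[0,1]$ that is continuous and strictly increasing there; all its $\beta$-quantiles with $\beta\in(0,1)$ are therefore singletons, so that $F\in\M_{\a,\text{inc}}$, and clearly $G\in\M_{\a,\text{inc}}$ as well. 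Since $Q$ fixes $1-\a$, we have $F(1-\a)=1-\a$, as required above.

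Next I would verify $g(u+\a)\le g(u)$ for $u\in[0,1-\a]$: if $u+\a\le 1-\a$ both sides vanish; if $u+\a\in[1-\a,1]$ then $g(u)=0$ while $g(u+\a)=-\epsilon\sin\!\big(\pi(u+2\a-1)/\a\big)\le 0$, since $u+2\a-1\in[0,\a]$ makes the sine nonnegative. This gives $\I_\a(G)\subseteq\I_\a(F)$. For strictness, any $u_0\in(\max\{0,1-2\a\},\,1-\a)$ satisfies $u_0+2\a-1\in(0,\a)$, so $g(u_0+\a)<0=g(u_0)$; setting $a_0:=Q(u_0)$ and $b_0:=Q(u_0+\a)$, the interval $[a_0,b_0]\subseteq[0,1]$ has length $b_0-a_0=\a+g(u_0+\a)<\a$, hence $G([a_0,b_0])<\a$, whereas $F([a_0,b_0])=F(b_0)-F(a_0)=(u_0+\a)-u_0=\a$. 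Thus $(a_0,b_0)^\intercal\in\I_\a(F)\setminus\I_\a(G)$, and together with $(-\infty,\infty)^\intercal\in\I_\a(G)$ this yields $\emptyset\neq\I_\a(G)\subsetneq\I_\a(F)$. Finally, for every $\lambda\in(0,1)$ the mixture $(1-\lambda)F+\lambda G$ has a continuous distribution function that is strictly increasing on $[0,1]$, hence lies in $\M_{\a,\text{inc}}$, so for any $\varepsilon\in(0,1)$ the choice $\lambda_0:=\varepsilon/2$ completes the check of the proper-subset property.

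The crux is securing the inclusion $\I_\a(G)\subseteq\I_\a(F)$, rather than merely $\I_\a(G)\neq\I_\a(F)$: one wants $F$ more peaked than the uniform so that it gains short prediction intervals the uniform lacks, but any perturbation that thins $F$ out over a region of length $\ge\a$ immediately creates an $\a$-window with $F$-mass $<\a$, i.e.\ a uniform $\a$-prediction interval that $F$ does not have, destroying the inclusion. Phrasing the obstruction as ``$g(u+\a)\le g(u)$'' makes this transparent and reduces the whole lemma to writing down one perturbation of the identity quantile function that is nonincreasing along $\a$-shifts; the remaining points --- membership of $F$, $G$ and their mixtures in $\M_{\a,\text{inc}}$, and non-emptiness of $\I_\a(G)$ --- are then routine.
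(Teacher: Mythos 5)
Your proposal is correct, and verifiably so: the reduction of $\I_\a(G)\subseteq\I_\a(F)$ to the shift condition $g(u+\a)\le g(u)$ on the quantile perturbation (for distributions on $[0,1]$ fixing $1-\a$) is sound, the bound $\epsilon\pi/\a<1$ guarantees $Q$ is a strictly increasing bijection, the strict inclusion is witnessed at $u_0$, and all mixtures remain in $\M_{\a,\text{inc}}$ because they stay continuous and strictly increasing on $[0,1]$ (so the second defining condition of $\M_{\a,\text{inc}}$ holds vacuously). However, your route is genuinely different from the paper's. The paper splits into two cases: for $\a\in[1/2,1)$ it takes two uniform laws $\mathrm{Unif}([b,c])\subsetneq$-related to $\mathrm{Unif}([0,1])$ by a short computation on $\Gamma_\a$, and observes that the chosen constraint window $1-b(1-\a)/\a\le c\le 1-b\a/(1-\a)$ is empty when $\a<1/2$; for $\a\in(0,1/2)$ it instead compares the point mass $\delta_0$ with $\tfrac12\delta_0+\tfrac12\delta_1$, and uses the flexibility in the proper-subset definition to only require membership in $\M_{\a,\text{inc}}$ for arbitrarily small mixture weights. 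By contrast, you produce a single construction valid for every $\a\in(0,1)$, at the price of introducing the quantile-function reparametrisation and a trigonometric bump. Each has its virtues: the paper's examples are elementary and require no calculus, while yours is unified and isolates the exact obstruction (``$g$ must be non-increasing along $\a$-shifts'') to nesting two continuous laws' prediction-interval classes. Both satisfy the definition of the proper-subset property; your mixtures in fact stay in $\M_{\a,\text{inc}}$ for \emph{all} $\lambda\in(0,1)$, which is slightly more than required.
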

\begin{proof}
We first show the claim for $\a\in[1/2,1)$. We determine $\I_\a(F)$ for $F=\text{Unif}([b,c])$, a uniform distribution on $[b,c]$, $b<c$. One easily verifies that $\Gamma_\a(\text{Unif}([b,c]))(a) = \max(a,b) + \a(c-b)$ for $a\le c-\a(c-b)$. For larger $a$, $\Gamma_\a(\text{Unif}([b,c]))$ is not defined. A straight forward calculation shows that for any $b<0$ and any $1-b(1-\a)/\a\le c\le 1- b\a/(1-\a)$, the domain of $\Gamma_\a(\text{Unif}([b,c]))$ is contained in the domain of $\Gamma_\a(\text{Unif}([0,1]))$ and that $\Gamma_\a(\text{Unif}([b,c]))\ge \Gamma_\a(\text{Unif}([0,1]))$ where the two functions do not coincide. As a result
$\emptyset\neq \I_\a(\text{Unif}([b,c]))\subsetneq \I_\a(\text{Unif}([0,1]))$; see the left panel of Figure~\ref{fig:subset}. 
Moreover, any convex mixture of two uniform distributions is an element of $\M_{\a, \text{inc}}$.\\
For $\a\in(0,1/2)$, it holds that $\I_\a(\delta_0) = [-\infty,0]\times[0,\infty]$ while $\I_\a(\delta_0/2 + \delta_1/2) = \big([-\infty,0]\times[0,\infty]\big) \cup \big([-\infty,1]\times[1,\infty]\big)$; see the right panel of Figure~\ref{fig:subset}. Note that even though there are $\l\in(0,1)$ such that $(1-\l)\delta_0 + \l (\delta_0/2 + \delta_1/2)\notin \M_{\a, \text{inc}}$, the proper-subset property is still satisfied.
\end{proof}

\begin{figure}
\centering
\includegraphics[width = 0.47\textwidth]{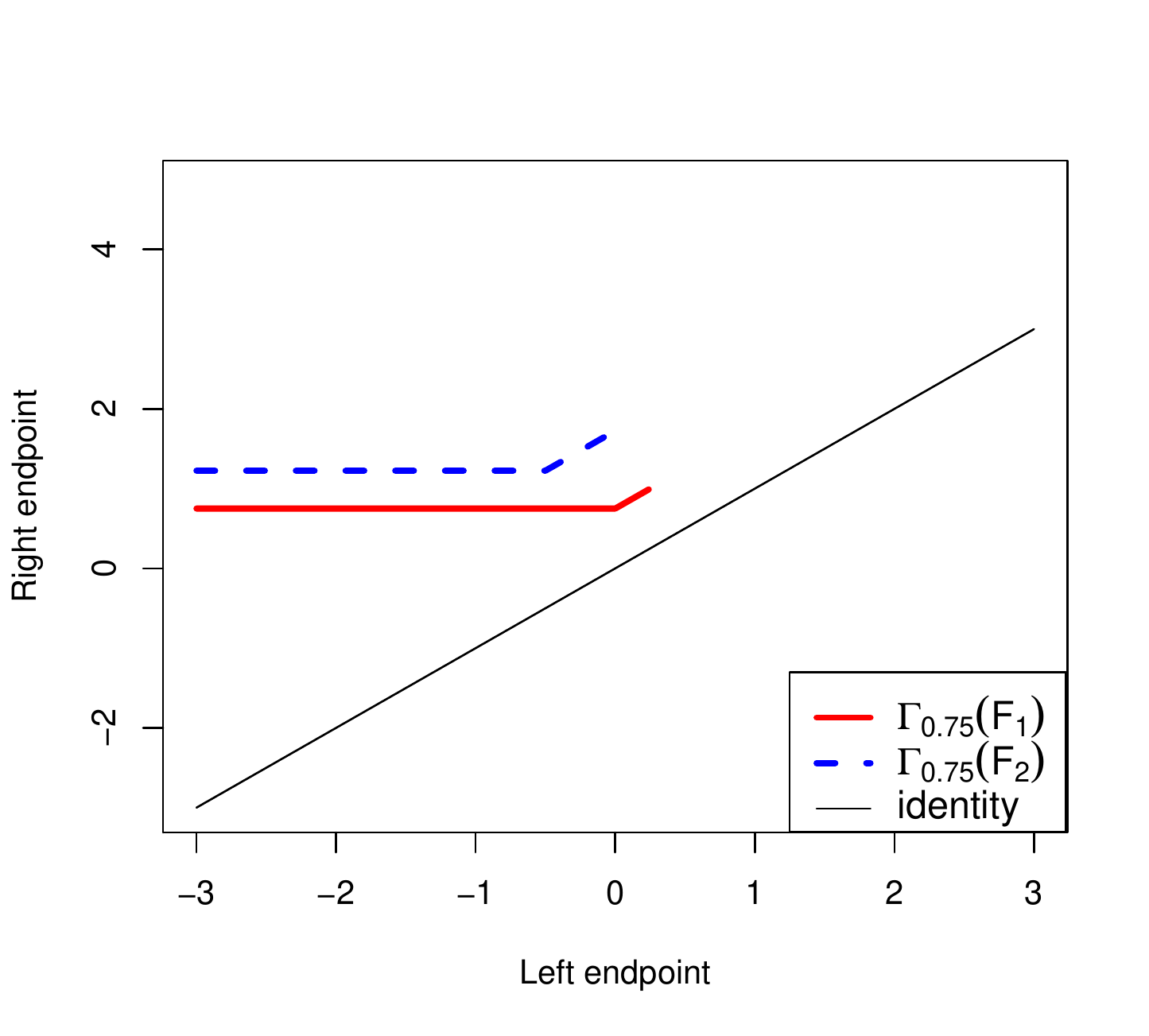}
\includegraphics[width = 0.47\textwidth]{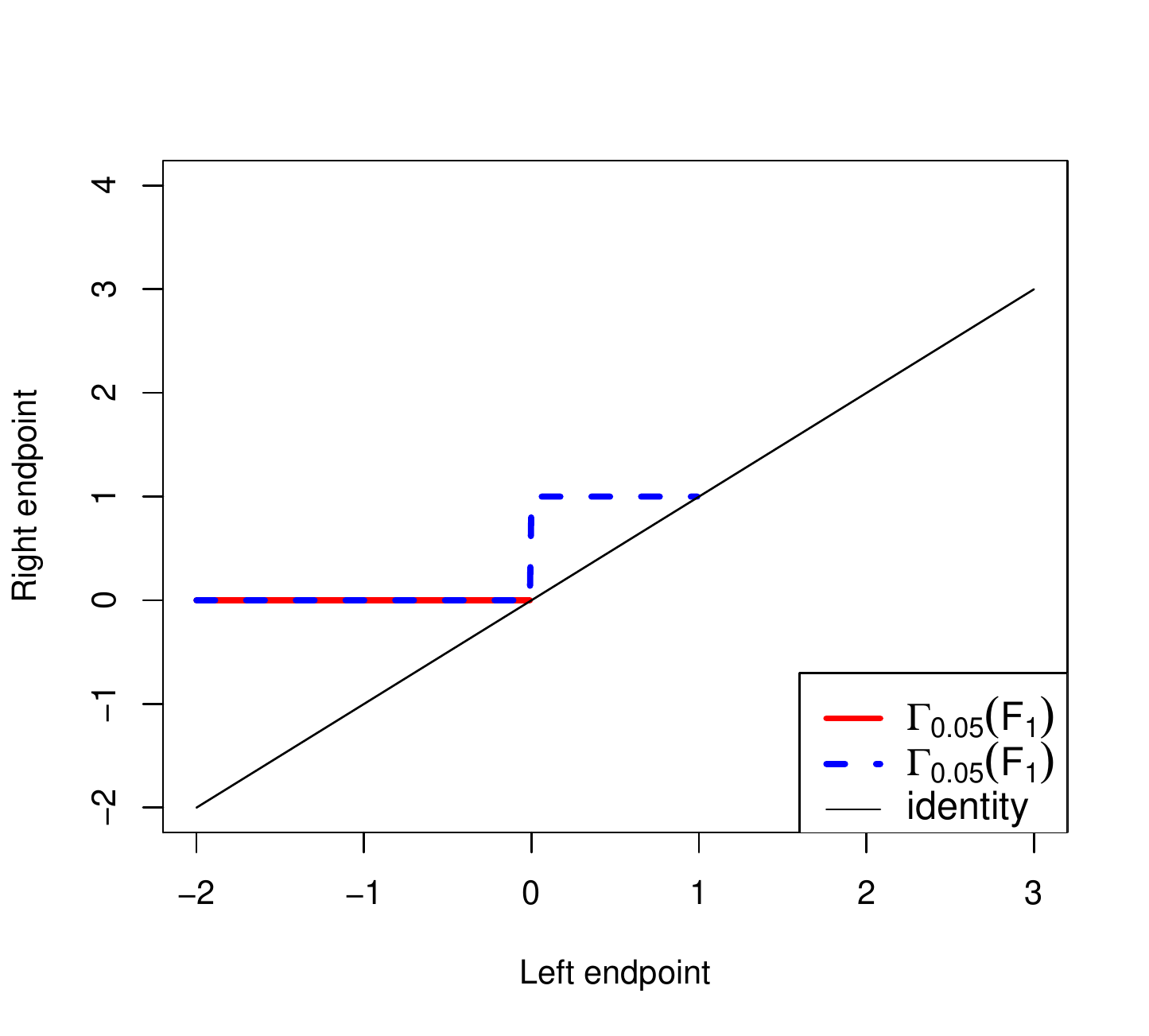}
\caption{Illustration of the proper-subset property.
Left panel: Graphs of $\Gamma_{0.75}(F_1)$ (red solid), $F_1= \text{Unif}([0,1])$, and $\Gamma_{0.75}(F_2)$ (blue dashed), $F_2 = \text{Unif}([-0.5,1.8])$. 
Right panel: Graphs of $\Gamma_{0.05}(F_1)$ (red solid), $F_1 = \delta_0$, and $\Gamma_{0.05}(F_2)$ (blue dashed), $F_2=\delta_0/2 + \delta_1/2$.}
\label{fig:subset}
\end{figure}

\begin{cor}\label{cor:not sel el}
For $\a\in(0,1)$, the class $\I_\alpha$ of $\alpha$-prediction intervals is not selectively elicitable on $\M_{\a, \text{\rm inc}}$.
\end{cor}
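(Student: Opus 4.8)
The plan is to obtain the statement purely by assembling three results already established in the excerpt, with no new computation. The key observation is that Corollary~\ref{cor:not sel el} is precisely the payoff of the mutual exclusivity principle (Theorem~\ref{thm:exclusivity}) once its two hypotheses have been verified for $\I_\a$ on $\M_{\a,\text{inc}}$.

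First I would invoke Theorem~\ref{thm:intervals_class}(iii), which (for $\a\in(0,1)$, the case $\a=1$ being excluded as trivial) shows that the restriction of the exhaustive score $\Se$ to $\mathcal U^*\times\R$ is strictly $\M_{\a,\text{inc}}$-consistent for $\I_\a$, so that $\I_\a$ is exhaustively elicitable on $\M_{\a,\text{inc}}$. Second, I would invoke Lemma~\ref{lem:subset example}, which asserts exactly that $\I_\a$ satisfies the proper-subset property on $\M_{\a,\text{inc}}$ for $\a\in(0,1)$. With these two facts in hand, Theorem~\ref{thm:exclusivity} applies to $T=\I_\a\colon\M_{\a,\text{inc}}\to\Ae\subseteq 2^{\bar U}$: a set-valued functional with the proper-subset property cannot be both selectively and exhaustively elicitable. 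Since $\I_\a$ \emph{is} exhaustively elicitable on $\M_{\a,\text{inc}}$, it follows that it is not selectively elicitable there, which is the claim.

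There is essentially no obstacle internal to this corollary; the genuine work has already been carried out in the cited statements. If I had to name the ``hard part'' of the overall argument, it is Theorem~\ref{thm:intervals_class}(iii) (the strict exhaustive consistency on the restricted action domain $\mathcal U^*$, where one must control the atoms of $\mu$ against rectangles $q_{F(x_1-)}(F)\times q_{\a+F(x_1-)}(F)$ of expected-identification-function zeros) together with the construction in Lemma~\ref{lem:subset example} exhibiting a strictly nested pair $\emptyset\neq\I_\a(G)\subsetneq\I_\a(F)$ whose suitable mixtures remain in $\M_{\a,\text{inc}}$; both are assumed available here. One should only take care to state the corollary for the same $\a$-range and the same class $\M_{\a,\text{inc}}$ on which both prerequisites hold, namely $\a\in(0,1)$, so that the quantifiers in Theorem~\ref{thm:exclusivity} match.

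\begin{proof}
Let $\a\in(0,1)$. By Theorem~\ref{thm:intervals_class}(iii), $\I_\a$ is exhaustively elicitable on $\M_{\a,\text{inc}}$, and by Lemma~\ref{lem:subset example}, $\I_\a$ satisfies the proper-subset property on $\M_{\a,\text{inc}}$. Hence Theorem~\ref{thm:exclusivity} applies and shows that $\I_\a$ cannot be both selectively and exhaustively elicitable on $\M_{\a,\text{inc}}$. Since it is exhaustively elicitable there, it is not selectively elicitable on $\M_{\a,\text{inc}}$.
\end{proof}
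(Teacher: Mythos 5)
Your proof is correct and follows exactly the paper's argument: the paper's own proof cites the same three results — Theorem~\ref{thm:intervals_class}, Lemma~\ref{lem:subset example}, and Theorem~\ref{thm:exclusivity} — and combines them in the same way. You have simply spelled out the combination explicitly, which matches the paper's intent.
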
 
\begin{proof}
This is a direct combination of Theorem~\ref{thm:intervals_class}, Theorem~\ref{thm:exclusivity} and Lemma~\ref{lem:subset example}.
\end{proof}

\begin{rem}\label{rem:comparison}
  Corollary \ref{cor:not sel el} is related to the impossibility result established in \citet[Section 3.1]{BrehmerGneiting2020}, that the `guaranteed coverage interval at level $\a$' ($GCI_\a$) is not selectively elicitable; see also \citet[Prop.~7.6]{LambertShoham2009} for a related result. Roughly speaking, $GCI_\a$ coincides with the topological boundary of $\I_{1-\a}$, or $\graph \Gamma_{1-\a}$. Even though there is an obvious bijection between $GCI_\a$ and $\I_{1-\a} = \epi \Gamma_{1-\a}$, one cannot invoke Osband's revelation principle here. It would only hold for exhaustive elicitability and not for selective elicitability. Therefore, even though the results are closely related, they are complementary.
\end{rem}

\subsection{Prediction interval with an endpoint or the midpoint given by an identifiable functional}
\label{subsec:midpoint}

In some situations, one might be interested in prediction intervals with one endpoint or the midpoint specified as some (identifiable) functional.
The simplest situation arises if the midpoint or an endpoint is simply a constant; we defer the discussion to Appendix \ref{subsec:fixed points}.
Apart from constants, the most natural such functionals appear to be the mean or the median for the midpoint, while also other quantiles or expectiles might be interesting.
If one endpoint is specified in terms of some quantile, the other endpoint must be a quantile itself and the elicitability of the vector is obvious and well known (if the quantiles are both singletons); see e.g.\ \cite{GneitingRaftery2007} or Proposition~\ref{prop:QQ}, which recalls this result for the sake of completeness.
On the other hand, we can show that
there are no twice continuously differentiable exhaustive scoring functions (see Propositions~\ref{prop:l} and~\ref{prop:m}) for other functionals under mild conditions. In the case of the midpoint given by an identifiable functional, this even holds for the quantile. This gives rise to the conjecture that such intervals are in general not elicitable.
Despite their failure of being (smoothly) elicitable, these functionals are still identifiable, therefore possessing the CxLS property. This leads to the novel observation that, in the multivariate setting, the equivalence of the CxLS property with identifiability and elicitability established for one-dimensional functionals in \cite{SteinwartPasinETAL2014} fails to hold.
We only address the case of the left endpoint given by an identifiable functional and remark that the right endpoint case works \emph{mutatis mutandis}.

\begin{prop}\label{prop:QQ}
Let $QI_{\a, \beta}\colon\M\to U$ be a prediction interval given by two lower quantiles, i.e.\ $QI_{\a, \beta}(F)= \big(q_{\beta}^-(F),q_{\alpha+\beta}^-(F)\big)^\intercal \in U$ with $\beta\in(0,1-\a)$. 
The following assertions hold:
\begin{enumerate}[\rm (i)]
\item $QI_{\a, \beta}$ is identifiable on any subclass $\M$ of $\M_0$ such that the $\beta$ and $(\a+\beta)$-quantiles are singletons for all distributions in $\M$. The function 
\[
V\colon U\times\R\to\R^2,\qquad (x,y)\mapsto V(x,y) = \big(\one\{y\le x_1\} - \beta, \one\{y\le x_2\} -\a- \beta\big)^\intercal\,
\]
is a strict identification function on $\M$.
\item $QI_{\a, \beta}$ is elicitable on any subclass $\M$ of $\M_0$ such that the $\beta$ and $\a+\beta$-quantiles are singletons for all distributions in $\M$. Any sum of two strictly $\M$-consistent scoring functions for the respective quantiles is a strictly $\M$-consistent scoring function for $QI_{\a, \beta}$.
\end{enumerate}
\end{prop}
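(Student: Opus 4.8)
The plan is to deduce both parts of Proposition~\ref{prop:QQ} from the classical facts about identifiability and elicitability of individual quantiles, and then to observe that combining the two endpoints coordinate-wise does no harm even though the action domain $U$ is a proper, non-product subset of $\R^2$.

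For part~(i) I would argue as follows. Recall the classical result (see e.g.\ \cite{Gneiting2011b}) that, for any level $\gamma\in(0,1)$, the map $v_\gamma(x,y)=\one\{y\le x\}-\gamma$ is a strict $\M$-identification function for the lower $\gamma$-quantile on any subclass of $\M_0$ on which $q_\gamma$ is a singleton, because there $\bar v_\gamma(x,F)=F(x)-\gamma$ vanishes precisely at $x=q_\gamma^-(F)$. Applying this once with $\gamma=\beta$ and once with $\gamma=\alpha+\beta$, the proposed $V$ decomposes as $\bar V(x,F)=\big(\bar v_\beta(x_1,F),\,\bar v_{\alpha+\beta}(x_2,F)\big)^\intercal$, so $\bar V(x,F)=0$ holds if and only if $x_1=q_\beta^-(F)$ and $x_2=q_{\alpha+\beta}^-(F)$, i.e.\ if and only if $x=QI_{\a,\beta}(F)$. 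It then remains only to check that this point lies in the stated domain: since $0<\beta\le\alpha+\beta<1$, the quantiles $q_\beta^-(F),q_{\alpha+\beta}^-(F)$ are real for every $F\in\M_0$, and since $\gamma\mapsto q_\gamma^-(F)$ is non-decreasing we have $q_\beta^-(F)\le q_{\alpha+\beta}^-(F)$; hence $QI_{\a,\beta}(F)\in U$ and $V\colon U\times\R\to\R^2$ is a strict $\M$-identification function for $QI_{\a,\beta}$.

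For part~(ii) I would take any strictly $\M$-consistent scoring functions $S_1$ for $q_\beta^-$ and $S_2$ for $q_{\alpha+\beta}^-$ (such scores exist, e.g.\ the generalised piecewise linear scores) and set $S(x,y):=S_1(x_1,y)+S_2(x_2,y)$ for $x=(x_1,x_2)^\intercal\in U$. By linearity of integration, $\bar S(x,F)=\bar S_1(x_1,F)+\bar S_2(x_2,F)$ is a sum of two terms depending on disjoint coordinates, each uniquely minimised over $\R$ at $q_\beta^-(F)$, resp.\ at $q_{\alpha+\beta}^-(F)$; hence $\bar S(\cdot,F)$ attains a strict global minimum over $\R^2$ at $QI_{\a,\beta}(F)$. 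Since, by part~(i), this minimiser already belongs to $U$, it is \emph{a fortiori} the unique minimiser of $\bar S(\cdot,F)$ over $U$, which is exactly the strict $\M$-consistency of $S$ for $QI_{\a,\beta}$; $\M$-finiteness is inherited from that of $S_1$ and $S_2$.

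I do not expect a genuine obstacle — the statement is essentially bookkeeping known to the experts — but the one point that deserves care is the interaction between the separable structure of $V$ and $S$ and the fact that the action domain $U=\{(a,b)^\intercal\in\R^2:a\le b\}$ is not a Cartesian product. This is handled by the elementary monotonicity $q_\beta^-(F)\le q_{\alpha+\beta}^-(F)$, which guarantees that the unconstrained coordinate-wise optimum automatically satisfies the ordering constraint, so that no constrained optimisation argument is required and the identified, resp.\ optimal, point genuinely lies in $U$.
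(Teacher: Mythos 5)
Your proof is the expected one: the paper does not actually write out a proof of Proposition~\ref{prop:QQ} (it presents it as a recall of a well-known fact, citing \cite{GneitingRaftery2007}), and the intended argument is exactly the coordinate-wise decomposition you give, combined with the observation that the ordering constraint defining $U$ is automatically satisfied because $\gamma\mapsto q_\gamma^-(F)$ is non-decreasing. You correctly isolate that last point as the only non-trivial bookkeeping step, and your part~(ii) is complete: the sum of two strictly consistent quantile scores is strictly consistent over $\R^2$, hence a fortiori over the restricted domain $U$ since the unconstrained minimiser lies in $U$.

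There is, however, a genuine subtlety in part~(i) that you (and, to be fair, the proposition's own hypothesis) glide over. You assert that $\bar v_\gamma(x,F)=F(x)-\gamma$ ``vanishes precisely at $x=q_\gamma^-(F)$'' whenever $q_\gamma(F)$ is a singleton. The singleton assumption does give you the uniqueness direction, but it does \emph{not} guarantee that $\bar v_\gamma$ vanishes at all. If $F$ jumps across level $\gamma$ at $t_0$, so that $F(t_0-)<\gamma<F(t_0)$, then $q_\gamma(F)=\{t_0\}$ is a singleton yet $\bar v_\gamma(x,F)=F(x)-\gamma$ is never zero; in particular $\bar v_\gamma(q_\gamma^-(F),F)=F(t_0)-\gamma>0$, so $v_\gamma$ fails to be even a (non-strict) identification function. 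To make part~(i) correct one needs the additional requirement that $F$ attains the levels $\beta$ and $\alpha+\beta$, i.e.\ $F(q_\beta^-(F))=\beta$ and $F(q_{\alpha+\beta}^-(F))=\alpha+\beta$ (continuity at the two quantiles suffices). Note that this is consistent with the rest of the paper: the identification result in Theorem~\ref{thm:intervals_class}(i) is proved over $\M_{\text{inc,\,cont}}$, not merely over a class with singleton quantiles. The elicitability argument in part~(ii) is unaffected, because the strict consistency of a quantile score genuinely only needs the quantile set to be a singleton.

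A small stylistic remark: in part~(ii) you appeal to ``part~(i)'' to say that the unconstrained minimiser lies in $U$; since that is a purely order-theoretic fact about $q_\gamma^-$ and not about identifiability, it is cleaner to cite the monotonicity of $\gamma\mapsto q_\gamma^-(F)$ directly, as you already did within part~(i), so that part~(ii) does not inherit the caveat above.
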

Note that, in fact, essentially any strictly consistent scoring function for $QI_{\a,\beta}$ is a sum of two strictly consistent scoring functions for the respective quantiles; see \citet[Proposition 4.2]{FisslerZiegel2016}. Very recently, \citet[Theorem 3.1]{BrehmerGneiting2020} characterised all translation invariant or positively homogeneous consistent scores for the central $\a$-prediction interval. 

Choosing $\beta=0$, reporting $QI_{\a,\beta}$ would boil down to reporting the lower $\a$-quantile, such that the identifiability and elicitability hold if (and only if) the $\alpha$-quantile is a singleton.
For the case $\beta=1-\a$, the second component of $QI_{\a,\beta}$ is the essential supremum. Therefore, we only obtain an identifiability result if all distributions in $\M$ are unbounded from above and the $(1-\a)$-quantiles are singletons. For elicitability results, we refer to Subsection~\ref{subsec:essential}.\\
Finally, we would like to remark that Proposition~\ref{prop:QQ} together with the mutual exclusivity result of Theorem~\ref{thm:exclusivity} implies that there cannot be a scoring function $\R\times\R\to\R$ such that the expected score is minimised on an interval between two quantiles, subject to very mild conditions on the class of distributions $\M$ (such that the proper-subset property is satisfied for $QI_{\a,\beta}$).

In Proposition \ref{prop:QQ}, we ensured the existence of the $\a$-prediction interval by restricting the range of $\beta$. 
Similarly, one has to restrict the class of probability distributions suitably to ensure the existence of an interval with the demanded coverage when the left endpoint is given by some general identifiable functional $l\colon\M\to\R$ where $\M$ is some subclass of $\M_0$. To assure that there is enough mass above $l(F)$, we write $\M_l=\{F\in\M\,|\,F(l(F)-)\leq1-\alpha\}$. For a midpoint specification in terms of an identifiable functional $m\colon\M\to\R$, such a restriction is not necessary.

\begin{prop}\label{prop:l}
Let $\a\in(0,1)$.
Let $l\colon \M\to\R$ be an identifiable functional with a strict $\M$-identification function $V_l\colon\R\times\R\to\R$.  
Set $b\colon \M_l\to [0,\infty)$, defined as $b(F) = \big(\Gamma_\a(F)(l(F)) - l(F)\big)/2$, which is half of the length of the shortest $\a$-prediction interval with lower endpoint $l$, and set
 $T_l\colon= (l,b)^\intercal \colon\M_l\to \R\times[0,\infty)$. Then the following assertions hold:
\begin{enumerate}[\rm (i)]
\item $T_l$ is identifiable on $\M_l\cap \M_{\text{\rm inc,\,cont}}$ with a strict identification function 
\[
V\colon \R\times[0,\infty) \times\R\to\R^2, \qquad V(z_1,z_2,y)=\big(V_l(z_1,y),\one\{y\in[z_1,z_1+ z_2]\}-\a\big)^\intercal.
\]
\item Assume that $\M_l$ is such that 
	\begin{enumerate}[\rm (a)]
	\item \label{ass a}
	$\M_l\cap \M_{\text{\rm inc,\,cont}}$ is convex;
	\item \label{ass b}
	for any $z\in \R\times (0,\infty)$ there are $F_1, F_2, F_3\in\M_l\cap \M_{\text{\rm inc,\,cont}}$ such that $0$ is in the interior of the convex hull of the set $\big\{\bar V(z,F_i)\,|\,i\in\{1,2,3\}\big\}\subset \R^2$;
	\item \label{ass c}
	$\bar V(\cdot,F)$ is continuously differentiable on $\R\times (0,\infty)$ for all $F\in\M_l\cap \M_{\text{\rm inc,\,cont}}$;
	\item \label{ass d}
	for any $(l^*,b^*)^\intercal\in\R\times (0,\infty)$ there are distribution functions $F_i\in \M_l\cap \M_{\text{\rm inc,\,cont}}$ with densities $f_i$, $i\in\{1,2,3,4\}$, such that
\begin{align*}
(l(F_1),b(F_1))^\intercal=(l(F_2),b(F_2))^\intercal&=(l(F_3),b(F_3))^\intercal=(l(F_4),b(F_4))^\intercal=(l^*,b^*)^\intercal\\
\bar{V}_l'(l^*,F_1)&=\bar{V}_l'(l^*,F_2)=\bar{V}_l'(l^*,F_3)\\
f_1(l^*+2b^*)&=f_2(l^*+2b^*)\\
f_1(l^*+2b^*)&\neq f_3(l^*+2b^*)\\
f_1(l^*)&\neq f_2(l^*)\\
\bar{V}_l'(l^*,F_4)&\neq0.
\end{align*}
	\end{enumerate}
	Then there is no strictly $\M_l\cap \M_{\text{\rm inc,\,cont}}$-consistent scoring function $S$ for $T_l$ such that $\bar S(\cdot, F)$ is twice continuously differentiable on $\R\times (0,\infty)$ for any $F\in \M_l\cap \M_{\text{\rm inc,\,cont}}$. 
\end{enumerate}
\end{prop}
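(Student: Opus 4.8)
\textbf{Proof plan for Proposition~\ref{prop:l}.}

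\emph{Part (i).} The identifiability is essentially a direct verification. Since $T_l = (l, b)^\intercal$, we need $\bar V(T_l(F), F) = 0$ and the converse implication. The first component $V_l(z_1, y)$ handles the identification of $l$ via the strictness of $V_l$, while the second component is a standard coverage identification function: $\bar V_2(z_1, z_2, F) = F([z_1, z_1 + z_2]) - \a = 0$ together with $z_1 = l(F)$ pins down $z_1 + z_2 = \Gamma_\a(F)(l(F))$, hence $z_2 = 2b(F)$, using that for $F \in \M_{\text{inc,cont}}$ the map $b \mapsto F([l(F), l(F)+b])$ is strictly increasing and continuous by Lemma~\ref{lem:Gamma}(iii). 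The only subtlety is to confirm that strictness of the joint identification follows from strictness of each component given that they are ``triangular'' (the first component does not depend on $z_2$); this is immediate.

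\emph{Part (ii).} This is the substantive part and follows the template of Osband's Principle \citep{Osband1985, FisslerZiegel2016}. Assume for contradiction that $S$ is strictly $\M_l \cap \M_{\text{inc,cont}}$-consistent for $T_l$ with $\bar S(\cdot, F)$ twice continuously differentiable on $\R \times (0,\infty)$. The plan is: (1) Fix an interior point $z^* = (l^*, b^*)^\intercal \in \R \times (0,\infty)$. By consistency, $z \mapsto \bar S(z, F)$ has a minimum at $T_l(F)$ whenever $T_l(F) = z^*$, so the first-order condition $\nabla_z \bar S(z^*, F) = 0$ holds for all such $F$. Osband's Principle then yields the existence of a matrix-valued function $h = h(z^*) \in \R^{2\times 2}$ (depending only on $z^*$, not on $F$) such that $\nabla_z \bar S(z^*, F) = h(z^*)\, \bar V(z^*, F)$ for all $F \in \M_l \cap \M_{\text{inc,cont}}$; here assumptions \ref{ass a} (convexity) and \ref{ass b} (the spanning condition, i.e.\ $0$ in the interior of the convex hull of three identification-function values) are exactly what is needed to invoke the principle. (2) Use the second-order symmetry of the Hessian of $\bar S(\cdot, F)$: since $\partial_1 \partial_2 \bar S = \partial_2 \partial_1 \bar S$, differentiating the identity $\partial_j \bar S(z^*, F) = \sum_k h_{jk}(z^*) \bar V_k(z^*, F)$ and equating cross-partials gives a PDE-type constraint linking the entries of $h$ and the partial derivatives of the components $\bar V_k(\cdot, F)$. (3) Evaluate these constraints at the four distributions $F_1, \dots, F_4$ supplied by assumption \ref{ass d}. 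The carefully engineered (in)equalities among $\bar V_l'(l^*, F_i)$ and the density values $f_i(l^*)$, $f_i(l^*+2b^*)$ force incompatible conditions on the entries of $h(z^*)$ (e.g.\ certain entries must simultaneously vanish and be nonzero), yielding the contradiction.

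\emph{Main obstacle.} The delicate step is the bookkeeping in step (3): one must compute the partial derivatives of $\bar V_2(z_1, z_2, F) = F([z_1, z_1+z_2]) - \a$ with respect to $z_1$ and $z_2$ explicitly — these involve the density values $f(z_1)$, $f(z_1 + z_2)$ at the endpoints — and track how they enter the symmetry constraint from step (2) alongside $\bar V_l$ and $\bar V_l' = \partial_{z_1}\bar V_l$. The four distributions in \ref{ass d} are tailored so that evaluating the constraint at $F_1$ vs.\ $F_2$ (same $\bar V_l'$, same $f(l^*+2b^*)$, different $f(l^*)$) isolates one combination of $h$-entries, while $F_1$ vs.\ $F_3$ (different $f(l^*+2b^*)$) isolates another, and $F_4$ (with $\bar V_l'(l^*, F_4) \neq 0$) rules out the remaining degree of freedom; verifying that these really do close off every case requires care but no deep idea. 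I would also need to check that the relevant Hessian entries are continuous so that the classical symmetry of second derivatives applies, which is guaranteed by the twice-continuous-differentiability hypothesis on $\bar S(\cdot, F)$ and assumption \ref{ass c} on $\bar V$.
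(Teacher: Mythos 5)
Your proposal follows the same route as the paper: part (i) by direct verification using strictness of $V_l$ and Lemma~\ref{lem:Gamma}(iii), and part (ii) via Osband's Principle (existence of $h(z)$ with $\nabla_z \bar S = h\,\bar V$ under assumptions \ref{ass a}--\ref{ass c}), symmetry of the Hessian, and the engineered distributions of \ref{ass d}. The roles you assign to $F_1$ vs.\ $F_2$, $F_1$ vs.\ $F_3$, and $F_4$ are exactly right.

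One small imprecision in the final step of your sketch: the four distributions do not yield entries of $h$ that are ``simultaneously zero and nonzero.'' What they give is $h_{12}\equiv h_{21}\equiv h_{22}\equiv 0$ at every $z^*\in\R\times(0,\infty)$. That alone is not yet a contradiction. One then plugs this back into the Hessian-symmetry identity to get $\partial_2 h_{11}(z)\,\bar V_l(z_1,F)=0$, chooses for each $z$ an $F$ with $l(F)\neq z_1$ (so $\bar V_l(z_1,F)\neq 0$) to conclude $\partial_2 h_{11}\equiv 0$, and therefore $\nabla_z\bar S(z,F) = (h_{11}(z_1)\bar V_l(z_1,F),\,0)^\intercal$. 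Hence $\bar S(\cdot,F)$ is constant in $z_2$, which contradicts strict consistency. So the contradiction is against strictness of $S$, not an internal inconsistency of the entries of $h$; you would need to add this closing argument.
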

\begin{proof}
See Appendix~\ref{subsec:proofs}.
\end{proof}

Points (a), (b) and (d) are basically richness assumptions on the class $\M_l\cap \M_{\text{\rm inc,\,cont}}$, which are needed to establish necessary conditions on the shape of possible strictly consistent scoring functions via Osband's principle \cite[Theorem 3.2]{FisslerZiegel2016}. In particular, (b) and (d) in combination with the convexity stipulated under (a) are surjectivity condition where (b) also assumes that the expected identification function may vary enough. (c) is a pure smoothness assumption which is needed since the proof exploits first and second order conditions.
In concrete situations, e.g.\ when $\M_l\cap \M_{\text{\rm inc,\,cont}}$ is the class of finite Gaussian mixtures and $l$ is the mean functional, these conditions can be verified by straightforward calculations.

\begin{rem}
If $l$ is a lower $\beta$-quantile with $\beta<1-\alpha$, one can choose  $V_l(x,y)=\one\{y\leq x\}-\beta$. In this case $\bar{V}_l(x,F)=F(x)-\beta$ and $\bar{V}_l'(x,F)=f(x)$, thus there cannot be two distribution functions $F_1, F_2\in\M_l\cap \M_{\text{\rm inc,\,cont}}$ with $\bar{V}_l'(l^*,F_1)=\bar{V}_l'(l^*,F_2)$ and $f_1(l^*)\neq f_2(l^*)$. Hence, the elicitability of an interval given by two lower quantiles does not contradict Proposition~\ref{prop:l}.
\end{rem}

\begin{prop}\label{prop:m}
Let $\a\in(0,1)$.
Let $m\colon \M\to\R$ be an identifiable functional with a strict $\M$-identification function $V_m\colon\R\times\R\to\R$. 
Set $b \colon\M\to[0,\infty)$, defined as $b(F) = d_\a(F)(m(F))$, which is half of the length of the shortest $\a$-prediction interval with midpoint $m(F)$,
and set $T_m=(m,b)^\intercal\colon\M\to\A\colon=\R\times[0,\infty)$. Then the following assertions hold:
\begin{enumerate}[\rm (i)]
\item $T_m$ is identifiable on $\M\cap \M_{\text{\rm inc,\,cont}}$ with a strict identification function 
\[
V\colon\R\times [0,\infty)\times\R\to\R^2,\qquad  V(z_1,z_2,y)=\big(V_m(z_1,y),\one\{y\in[z_1- z_2, z_1+z_2]\}-\a\big)^\intercal.
\]
\item 
Assume that $\M$ is such that assumptions (a), (b) and (c) from Proposition~\ref{prop:l} hold mutatis mutandis. Moreover, suppose that (d) for any $(m^*,b^*)^\intercal\in\R\times (0,\infty)$, there are distribution functions $F_i,\in \M\cap \M_{\text{\rm inc,\,cont}}$ with densities $f_i$, $i\in\{1,2,3,4\}$, such that
\begin{align*}
(m(F_1),b(F_1))^\intercal=(m(F_2),b(F_2))^\intercal&=(m(F_3),b(F_3))^\intercal=(m(F_4),b(F_4))^\intercal=(m^*,b^*)^\intercal\\
\bar{V}_m'(m^*,F_1)&=\bar{V}_m'(m^*,F_2)=\bar{V}_m'(m^*,F_3)\\
f_1(m^*+b^*)+f_1(m^*-b^*)&=f_2(m^*+b^*)+f_2(m^*-b^*)\\
f_1(m^*+b^*)+f_1(m^*-b^*)&\neq f_3(m^*+b^*)+f_3(m^*-b^*)\\
f_1(m^*+b^*)-f_1(m^*-b^*)&\neq f_2(m^*+b^*)-f_2(m^*-b^*)\\
\bar{V}_m'(m^*,F_4)&\neq0.
\end{align*}
Then there is no strictly $\M\cap \M_{\text{\rm inc,\,cont}}$-consistent scoring function $S$ for $T_m$ such that $\bar S(\cdot, F)$ is twice continuously differentiable on $\R\times(0,\infty)$ for any $F\in\M\cap \M_{\text{\rm inc,\,cont}}$.
\end{enumerate}
\end{prop}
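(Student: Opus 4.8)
The plan is to prove part~(i) by combining the strict $\M$-identification property of $V_m$ with an elementary monotonicity argument, and part~(ii) by invoking Osband's principle and then forcing the relevant entries of the Osband matrix to vanish via a second-order (Schwarz) argument, exactly as in the (appendix) proof of Proposition~\ref{prop:l}.

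\textbf{Part (i).} For $F\in\M\cap\M_{\text{\rm inc,\,cont}}$ with density $f$, write $\bar V(z_1,z_2,F)=\big(\bar V_m(z_1,F),\,F([z_1-z_2,z_1+z_2])-\a\big)^\intercal$. First I would observe that, since $F$ is continuous and strictly increasing, $c\mapsto F([m(F)-c,m(F)+c])=F(m(F)+c)-F(m(F)-c)$ is continuous and strictly increasing on $[0,\infty)$, vanishes at $c=0$, and tends to $1>\a$; hence there is a unique $c^*\ge 0$ with $F([m(F)-c^*,m(F)+c^*])=\a$, and by definition $c^*=d_\a(F)(m(F))=b(F)$. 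Therefore $\bar V(z_1,z_2,F)=0$ forces $z_1=m(F)$ by strictness of $V_m$, and then the second coordinate forces $z_2=c^*=b(F)$; conversely $\bar V(T_m(F),F)=0$. This shows $V$ is a strict $\M\cap\M_{\text{\rm inc,\,cont}}$-identification function for $T_m$.

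\textbf{Part (ii).} Suppose, for contradiction, that $S$ is strictly $\M\cap\M_{\text{\rm inc,\,cont}}$-consistent for $T_m$ with $\bar S(\cdot,F)$ twice continuously differentiable on $\R\times(0,\infty)$. Since a continuous distribution assigns no mass to a point, $b(F)=d_\a(F)(m(F))>0$, so $T_m(F)$ always lies in the interior $\R\times(0,\infty)$ of the action domain. Assumptions (a), (b) and (d) supply the convexity, surjectivity and ``variation'' hypotheses required to apply Osband's principle \cite[Theorem~3.2]{FisslerZiegel2016} to the pair $(S,V)$ with $V$ from part~(i), and (c) together with twice-differentiability of $\bar S$ ensures the resulting matrix-valued function $h=(h_{ij})\colon\R\times(0,\infty)\to\R^{2\times 2}$ is continuously differentiable and satisfies $\partial_i\bar S(z,F)=\sum_{j=1}^2 h_{ij}(z)\,\bar V_j(z,F)$ for $i=1,2$, all $z\in\R\times(0,\infty)$ and all admissible $F$. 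Then I would use $\partial_2\partial_1\bar S=\partial_1\partial_2\bar S$: differentiating the Osband identity and evaluating at $z^*=T_m(F)=(m^*,b^*)$, where $\bar V_1(z^*,F)=\bar V_2(z^*,F)=0$, all terms with derivatives of $h_{ij}$ drop out, leaving
\[
h_{12}(z^*)\,\partial_2\bar V_2(z^*,F)=h_{21}(z^*)\,\partial_1\bar V_1(z^*,F)+h_{22}(z^*)\,\partial_1\bar V_2(z^*,F),
\]
since $\partial_2\bar V_1\equiv 0$. Here $\partial_1\bar V_1(z^*,F)=\bar V_m'(m^*,F)$, $\partial_2\bar V_2(z^*,F)=f(m^*+b^*)+f(m^*-b^*)$ and $\partial_1\bar V_2(z^*,F)=f(m^*+b^*)-f(m^*-b^*)$. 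Feeding in $F_1,\dots,F_4$ from assumption~(d): subtracting the relation for $F_2$ from that for $F_1$ eliminates the $h_{12}$ and $h_{21}$ terms and gives $h_{22}(z^*)=0$ because the ``odd'' density combinations differ; subtracting the relation for $F_3$ from that for $F_1$ (both now with $h_{22}=0$ and equal $\bar V_m'$) gives $h_{12}(z^*)=0$ because the ``even'' density combinations differ; and the relation for $F_4$ then reads $0=h_{21}(z^*)\bar V_m'(m^*,F_4)$, so $h_{21}(z^*)=0$ since $\bar V_m'(m^*,F_4)\neq 0$. As $(m^*,b^*)$ ranges over all of $\R\times(0,\infty)$, this yields $h_{12}\equiv h_{21}\equiv h_{22}\equiv 0$.

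Finally, the Osband identity gives $\partial_2\bar S(z,F)=h_{21}(z)\bar V_1(z,F)+h_{22}(z)\bar V_2(z,F)=0$ for all $z\in\R\times(0,\infty)$ and all $F$, so $\bar S(z_1,\cdot,F)$ is constant on $(0,\infty)$; hence $z_2=b(F)$ is not its unique minimiser, contradicting strict consistency. The main obstacle is thus not the final contradiction but the bookkeeping one step earlier: checking that the ``mutatis mutandis'' versions of (a)--(d) genuinely provide the form of Osband's principle with a $C^1$ matrix $h$, and arranging the subtractions of the $F_i$ in the correct order so that $h_{22}$, then $h_{12}$, then $h_{21}$ are eliminated in turn.
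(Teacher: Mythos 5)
Your proof is correct and follows essentially the same route as the paper: part~(i) via strictness of $V_m$ plus strict monotonicity of $c\mapsto F(m+c)-F(m-c)$, and part~(ii) via Osband's principle, symmetry of the mixed partials of $\bar S$ evaluated at $T_m(F)$, and the four distributions of assumption~(d) applied in the order $F_1$--$F_2$ (kill $h_{22}$), $F_1$--$F_3$ (kill $h_{12}$), $F_4$ (kill $h_{21}$), after which $\partial_2\bar S\equiv0$ contradicts strict consistency. The only cosmetic difference is that the paper's template proof (Proposition~\ref{prop:l}) additionally derives $\partial_2 h_{11}=0$ before concluding, a step you correctly observe is unnecessary once $h_{21}\equiv h_{22}\equiv 0$.
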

\begin{proof}
See Appendix~\ref{subsec:proofs}.
\end{proof}

\subsection{Shortest prediction intervals}
\label{subsec:essential}

In the context of probabilistic forecasts,
\citet[p.\ 243]{GneitingETAL2007} proposed the paradigm of ``maximizing the sharpness of the predictive distribution subject to calibration'', continuing: ``Calibration refers to the statistical consistency between the distributional forecasts and the observations and is a joint property of the predictions and the events that materialize. Sharpness refers to the concentration of the predictive distributions and is a property of the forecasts only.'' Following this rationale, a particularly well-motivated restriction of $\I_\a$ is the shortest prediction interval $SI_\a$, meaning a prediction interval of minimal length (sharp) subject to achieving a coverage of at least $\alpha$ (calibrated).
This is in line with the decision-theoretic derivation of the `prescriptive optimal interval forecast' given in \citet[Section 2.2]{AskanaziETAL2018}:
``restrict attention to correctly-calibrated intervals, and then pick the shortest (on average).''
In this subsection, we will study the elicitability of $SI_\a$.

Let us first consider the case $\a=1$, where the shortest $\a$-prediction interval of $F\in\M$ is  $SI_1(F) = ((\essinf(F), \esssup(F))^\intercal$, which is possibly of infinite length.
Here $\essinf$ and $\esssup$ are the essential infimum and supremum, respectively, defined by
$\sup q_0$ and $\inf q_1$, where $q_\alpha$ is the quantile functional.
Thus, to understand the elicitability of $SI_1$, it suffices to study the elicitability of $\essinf$ and $\esssup$.

To this end, let $g\colon\R\to\R$ be an increasing and bounded function, and set $g(\pm\infty) = \lim_{x\to\pm\infty}g(x)$.
Recall that for $\a\in(0,1)$ a consistent selective score for the $\a$-quantile is given by $S_\a(x,y) = (\one\{y\le x\}-\a) \big(g(x) - g(y)\big)$.
If $q_\alpha$ is surjective on $\M$ in the sense that for any $x\in\R$ there exists an $F\in\M$ such that $x\in q_\alpha(F)$, then $S_\a$ becomes strictly $\M$-consistent if and only if $g$ is strictly increasing.
Now consider the following generalisations of $S_\a$ for $\a\in\{0,1\}$, clearly failing to be $\M$-finite in general:
\begin{align}\label{eq:S_0}
S_0(x,y) &= \infty\cdot\one\{y<x\} + g(y) - g(x),\\
S_1(x,y) &= \infty\cdot\one\{y>x\} + g(x) - g(y).\label{eq:S_1}
\end{align}
Interestingly, if $g$ is constant, $S_0$ becomes a strictly $\M_0$-consistent selective scoring function for $q_0$, and $S_1$ for $q_1$.
On the other hand, if $g$ is strictly increasing, they become strictly $\M_0$-consistent for 
the essential infimum and essential supremum, respectively, and the elicitability of $SI_1$ then follows.

\begin{prop}\label{sec:SI-1}
  $SI_1$ can be elicited on $\M_0$ with non $\M_0$-finite, strictly $\M_0$-consistent score $S((a,b)^\intercal,y) = \infty\cdot\one\{y \notin [a,b]\} + g(b) - g(a)$ where $g\colon\R\to\R$ is strictly increasing and bounded. 
\end{prop}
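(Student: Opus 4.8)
The plan is to compute the expected score $\bar S\big((a,b)^\intercal,F\big)$ directly for an arbitrary $F\in\M_0$ and to show it is minimised, uniquely, at $SI_1(F)=(\essinf(F),\esssup(F))^\intercal$, the action domain being $\bar U$ so that infinite endpoints are allowed. Since $g$ is increasing and bounded, the limits $g(-\infty)=\inf g$ and $g(+\infty)=\sup g$ are finite, so $g(b)-g(a)$ is a well-defined real number for every $(a,b)^\intercal\in\bar U$; moreover $S$ is bounded from below (by $\inf g-\sup g$), so $\bar S\big((a,b)^\intercal,F\big)$ is a well-defined element of $\R^*$.

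The first step is to split $S$ into its two summands. With the measure-theoretic convention $\infty\cdot 0=0$, the expected value of the penalty term is
\[
\int \infty\cdot\one\{y\notin[a,b]\}\dint F(y)
=\infty\cdot\big(F(a-)+1-F(b)\big),
\]
which equals $0$ exactly when $F(a-)=0$ and $F(b)=1$, and $+\infty$ otherwise. From the definitions $\essinf(F)=\sup q_0(F)$ and $\esssup(F)=\inf q_1(F)$ together with the right-continuity of $F$, one checks the equivalences $F(a-)=0\iff a\le\essinf(F)$ and $F(b)=1\iff b\ge\esssup(F)$, the conventions at $\pm\infty$ posing no difficulty (for $a=-\infty$ the set $(-\infty,a)$ is empty, and $F(\infty)=F(\R)=1$). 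Therefore
\[
\bar S\big((a,b)^\intercal,F\big)=
\begin{cases}
g(b)-g(a) & \text{if } a\le\essinf(F)\ \text{and}\ b\ge\esssup(F),\\
+\infty & \text{otherwise.}
\end{cases}
\]
The second step uses the strict monotonicity of $g$: on the nonempty set $\{(a,b)^\intercal\in\bar U:a\le\essinf(F),\,b\ge\esssup(F)\}$ the map $(a,b)\mapsto g(b)-g(a)$ is strictly decreasing in $a$ and strictly increasing in $b$, hence its infimum is attained, and attained only, at $a=\essinf(F)$, $b=\esssup(F)$, i.e.\ at $SI_1(F)$, with the finite value $g(\esssup(F))-g(\essinf(F))$. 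Combining the two displays yields $\bar S(SI_1(F),F)\le\bar S\big((a,b)^\intercal,F\big)$ for every $(a,b)^\intercal\in\bar U$, with equality forcing $(a,b)^\intercal=SI_1(F)$; this is exactly the strict $\M_0$-consistency of $S$ for $SI_1$. The same formula shows $S$ is not $\M_0$-finite, since $\bar S\big((a,b)^\intercal,F\big)=+\infty$ whenever $a>\essinf(F)$ (and $\essinf(F)>-\infty$).

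There is no genuine obstacle; the argument is a short computation. The only points needing care are the bookkeeping at $\pm\infty$ — reading $g(\pm\infty)$, $F(a-)$ at $a=-\infty$, and $F(\infty)$ correctly so that the equivalence $F([a,b])=1\iff a\le\essinf(F),\,b\ge\esssup(F)$ holds without exception — and the observation that the minimiser lies in the regime where the infinite penalty vanishes, so that the decisive comparison $g(\esssup(F))-g(\essinf(F))\le g(b)-g(a)$ is between genuine real numbers and not a vacuous $\infty\le\infty$.
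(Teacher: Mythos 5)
Your proof is correct and follows essentially the same route as the paper: compute $\bar S\big((a,b)^\intercal,F\big)$, observe it is $+\infty$ unless $a\le\essinf(F)$ and $b\ge\esssup(F)$ (equivalently $F([a,b])=1$), and on that region minimise $g(b)-g(a)$ by strict monotonicity of $g$. The paper compresses the second half by noting that $S$ is the pointwise sum of the scores $S_0$ and $S_1$ from \eqref{eq:S_0}--\eqref{eq:S_1}, whose strict $\M_0$-consistency for $\essinf$ and $\esssup$ was asserted in the surrounding discussion; your version is more self-contained since it carries out that minimisation explicitly and verifies the boundary bookkeeping at $\pm\infty$ rather than delegating it.
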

\begin{proof}
  We have $\bar S((a,b)^\intercal,F) = \infty$ if $F([a,b]) < 1$, and $g(b) - g(a)$ otherwise.
Clearly, this is the sum of the strictly consistent functions for the essential supremum and infimum given in \eqref{eq:S_0} and \eqref{eq:S_1}.
\end{proof}

\begin{rem}
  Since $q_0 = (-\infty,\sup q_0]$ and $q_1 = [\inf q_1,\infty)$, the scores $S_0$ and $S_1$ can be directly used to construct strictly consistent \emph{exhaustive} scoring functions for $q_0$ and $q_1$, respectively, by invoking the revelation principle.
Thus, the 0-quantile and 1-quantile are both selectively and exhaustively elicitable.
Theorem~\ref{thm:exclusivity} does not apply here as Theorem~\ref{thm:CxLS} only holds for scoring functions whose expectation is always finite.
Moreover, if we were to impose that all scores be finite in expectation, a common assumption in the literature \citep{FisslerZiegel2016, BrehmerStrokorb2019, WangWei2020}, Proposition~\ref{prop:selections b} would apply, implying the non-elicitability of $\essinf$ and $\esssup$, recovering a result established in the proof of \citet[Corollary 4.3]{Ziegel2016}.
Hence, while the result for $SI_1$ is positive, it is narrowly so, as it leans heavily on the ability to assign infinite expected scores.
\end{rem}

Turning now to the case $\a\in(0,1)$, we first observe that the shortest $\a$-prediction interval is necessarily bounded: by a simple continuity argument for the probability measure $F\in\M$, there is some $C>0$ (depending on $F$) such that $(-C,C)^\intercal \in \I_\a(F)$.
For $F\in\M$ and $\a\in(0,1)$, we may therefore define
\be{eq:SI}
SI_\a(F) = \big\{(a,b)^\intercal \in \I_\a(F)\,|\, b-a \le d-c \ \text{ for all } (c,d)^\intercal \in \I_\a(F),\ c,d\in\R\big\}\,.
\ee
For many distributions $F$, such as the uniform distribution on $[0,1]$, $SI_\a(F)$ contains more than a single element, so we again must formally distinguish between exhaustive and selective reports.
Importantly, Lemma~\ref{lem:SI existence}, proven in the Appendix, asserts that $SI_\a(F)$ is always non-empty, meaning each distribution has at least one shortest $\a$-prediction interval.
\begin{lem}\label{lem:SI existence}
For all $\a\in(0,1]$ and for all $F\in\M$ it holds that $SI_\a(F)\neq \emptyset$.
\end{lem}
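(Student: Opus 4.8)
The plan is to show that the shortest $\a$-prediction interval $SI_\a(F)$ is nonempty by exhibiting an actual interval of minimal length achieving coverage at least $\a$. The natural strategy is a compactness argument: first establish that the infimal length $L := \inf\{\,d-c : (c,d)^\intercal \in \I_\a(F),\ c,d\in\R\,\}$ is attained. For $\a=1$ this requires care since $SI_1(F)$ may be the (possibly unbounded) pair $(\essinf F, \esssup F)^\intercal$, which always lies in $\I_1(F)$ by right-continuity of $F$ and the definition of essential infimum and supremum; so the case $\a=1$ can be dispatched immediately, and the substance is $\a\in(0,1)$.

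For $\a\in(0,1)$, I would proceed as follows. By the continuity-of-measure argument mentioned just before Lemma~\ref{lem:SI existence}, there is a $C>0$ with $(-C,C)^\intercal\in\I_\a(F)$, so $L\le 2C<\infty$ and the infimum is over a nonempty set of nonnegative reals, hence $L\ge 0$ is finite. Take a minimising sequence $(c_n,d_n)^\intercal\in\I_\a(F)$ with $d_n-c_n\to L$. The key step is to argue that one may take this sequence to be bounded: since shrinking a prediction interval that has slack could only help, and since we may translate each interval, the point is to rule out the left endpoints escaping to $\pm\infty$. Concretely, for any $\eps>0$ there is a compact $[-R,R]$ with $F([-R,R])>1-\eps$; choosing $\eps<1-\a$ (possible as $\a<1$), any interval $[c_n,d_n]$ with coverage $\ge\a$ must intersect $[-R,R]$, for otherwise $F([c_n,d_n])\le F(\R\setminus[-R,R])<\a$. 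Combined with $d_n-c_n\le L+1$ eventually, this confines $(c_n,d_n)$ to a compact subset of $U$. Passing to a convergent subsequence $(c_n,d_n)\to(c^*,d^*)$, we get $d^*-c^*=L$, and it remains to check $F([c^*,d^*])\ge\a$. This follows because $F([c^*,d^*])\ge\limsup_n F([c_n,d_n])\ge\a$: the set-valued map $[c,d]\mapsto F([c,d])=F(d)-F(c-)$ is upper semicontinuous in $(c,d)$ by right-continuity of $F$ (so $F(d_n)\to$ a value $\le F(d^*)$ fails in the wrong direction — one must instead enlarge slightly: for any $\delta>0$, eventually $[c_n,d_n]\subseteq[c^*-\delta,d^*+\delta]$, giving $F([c^*-\delta,d^*+\delta])\ge\a$, then let $\delta\downarrow0$ and use right-continuity from above of $d\mapsto F(d)$ and left-continuity of $c\mapsto F(c-)$ to conclude $F([c^*,d^*])\ge\a$).

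Hence $(c^*,d^*)^\intercal\in\I_\a(F)$ realises the minimal finite length $L$, so $(c^*,d^*)^\intercal\in SI_\a(F)$ and the set is nonempty. The main obstacle is the limiting step for coverage: one cannot directly pass coverage through the limit because $[c_n,d_n]$ need not be nested in $[c^*,d^*]$, so the argument must go through an $\delta$-enlargement and then exploit the one-sided continuity properties of $F$ to remove the $\delta$; this is where the $\M_0$-specific structure (cumulative distribution functions, their right-continuity, and the convention $F([a,b])=F(b)-F(a-)$) is used. The boundedness reduction is the second mildly delicate point but is handled cleanly by the tightness estimate above. All remaining steps are routine.
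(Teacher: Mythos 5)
Your proof is correct up to one small slip, and it takes a genuinely different route from the paper. You argue by a direct compactness argument on minimising sequences: tightness of $F$ confines a length-minimising sequence of finite intervals in $\I_\a(F)$ to a compact subset of $U$, and the coverage of a subsequential limit $(c^*,d^*)^\intercal$ is recovered via a $\delta$-enlargement together with right-continuity of $d\mapsto F(d)$ and left-continuity of $c\mapsto F(c-)$. The paper instead reformulates existence as the question of whether $h(x)=\Gamma_\a(F)(x)-x$ attains its infimum on $P=\{x\,|\,F(x-)\le 1-\a\}$; it first treats the case where $\Gamma_\a(F)$ is continuous by a subsequence argument (using Lemma~\ref{lem:Gamma} to rule out escape to $-\infty$), and then handles possible jumps of $\Gamma_\a(F)$ --- caused by atoms or flat spots of $F$ --- by an explicit modification of $h$ at the countably many discontinuities. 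Your route avoids this case distinction altogether and never needs $\Gamma_\a$: the enlargement handles atoms and flat spots uniformly. Incidentally, the enlargement is not strictly necessary: the map $(c,d)\mapsto F([c,d])=F(d)-F(c-)$ is upper semicontinuous on $U$ (since $F$ is nondecreasing and right-continuous and $F(\cdot\,-)$ is nondecreasing and left-continuous), so $F([c^*,d^*])\ge\limsup_n F([c_n,d_n])\ge\a$ holds directly; your parenthetical concern that this ``fails in the wrong direction'' is unfounded, though your enlargement argument is an equally valid substitute.

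The slip is in the tightness step: you choose $\eps<1-\a$ and then want $F(\R\setminus[-R,R])<\eps$ to force $F([c_n,d_n])<\a$ whenever $[c_n,d_n]$ misses $[-R,R]$. But $\eps<1-\a$ only gives $F(\R\setminus[-R,R])<1-\a$, which is $<\a$ only when $\a\ge 1/2$. The intended choice is $\eps<\a$, which is always possible for $\a\in(0,1)$ and yields the contradiction directly. With this correction, the argument is complete.
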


The following theorem gives a comprehensive negative result of the elicitability of $SI_\alpha$ for $\alpha\in(0,1)$.
\begin{thm}[Shortest $\a$-prediction interval]\label{prop:SI}
\begin{enumerate}[\rm (i)]
\item
  For $\a\in(0,1)$, the shortest $\a$-prediction interval $SI_\a$ is not selectively elicitable on any class $\M$ containing (a) all distributions with bounded Lebesgue densities, or (b) all distributions on $\N_0$ which are unimodal with mode $k$ for some $k\ge1$.   
  Moreover, $SI_\a$ can even not be selectively elicited with any non $\M$-finite score.
\item
For $\a\in(0,1]$, the shortest $\a$-prediction interval $SI_\a$ is not exhaustively elicitable (with $\M$-finite scores) on any class $\M$ such that for some $x\neq y\in\R$, $\{(1-\l)\delta_x + \l \delta_y\,|\,\l\in[0,1]\}\subseteq \M$.
\end{enumerate}
\end{thm}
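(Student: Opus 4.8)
Here the plan is a discontinuity argument in the spirit of Theorem~\ref{thm:CxLS}, restricting attention to the one-parameter family $F_\l := (1-\l)\delta_x + \l\delta_y$, $\l\in[0,1]$, which lies in $\M$ by hypothesis. A direct computation gives the piecewise-constant behaviour of $\l\mapsto SI_\a(F_\l)$: for $\a\in(0,1/2]$ it equals $\{(x,x)^\intercal\}$ on $[0,\a)$, $\{(x,x)^\intercal,(y,y)^\intercal\}$ on $[\a,1-\a]$, and $\{(y,y)^\intercal\}$ on $(1-\a,1]$; for $\a\in(1/2,1]$ it equals $\{(x,x)^\intercal\}$ on $[0,1-\a]$, $\{(x,y)^\intercal\}$ on $(1-\a,\a)$, and $\{(y,y)^\intercal\}$ on $[\a,1]$. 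In either case there exist a value $A$, a point $\l_0\in[0,1)$ and $\l_n\to\l_0$ with $\l_n\neq\l_0$ such that $SI_\a(F_{\l_n})=A$ for all $n$ while $SI_\a(F_{\l_0})=B\neq A$. If $S$ were an $\M$-finite strictly $\M$-consistent exhaustive score for $SI_\a$, strict consistency would force $\bar S(A,F_{\l_n})<\bar S(B,F_{\l_n})$ for every $n$ and $\bar S(B,F_{\l_0})<\bar S(A,F_{\l_0})$; but $\l\mapsto\bar S(A,F_\l)-\bar S(B,F_\l)=(1-\l)\bigl(S(A,x)-S(B,x)\bigr)+\l\bigl(S(A,y)-S(B,y)\bigr)$ is affine, and by $\M$-finiteness --- together with the fact that strict consistency forces $\bar S(SI_\a(F),F)\in\R$ --- all the expectations involved are real, so this map is continuous, whence $\bar S(A,F_{\l_0})-\bar S(B,F_{\l_0})=\lim_n\bigl(\bar S(A,F_{\l_n})-\bar S(B,F_{\l_n})\bigr)\le 0$, contradicting $\bar S(B,F_{\l_0})<\bar S(A,F_{\l_0})$. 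This proves~(ii).

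\textbf{Part~(i).} For the selective case the tool is Proposition~\ref{prop:CxLS}: selective elicitability --- and, inspecting that proof, this needs no $\M$-finiteness --- implies the selective CxLS* property, so it suffices to violate CxLS* inside each of the two classes. Case~(b) is possible because a shortest $\a$-prediction interval of a lattice distribution can have coverage \emph{strictly} above $\a$. I would exhibit two distributions $F_0,F_1$ on $\N_0$, both unimodal with the same mode $k$ (so that every mixture $F_\l$ is again unimodal with mode $k$, since non-decreasing-then-non-increasing sequences are closed under convex combination), both with shortest-$\a$-PI length $1$, sharing exactly the single shortest $\a$-PI $[k,k+1]^\intercal$, but with $F_1([k+1,k+2])>\a>F_0([k+1,k+2])$. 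Then for $\l$ close to $1$ the mixture $F_\l$ still has shortest length $1$ and still contains $[k,k+1]^\intercal$, but now $[k+1,k+2]^\intercal\in SI_\a(F_\l)$ as well, so $\emptyset\neq SI_\a(F_0)\cap SI_\a(F_1)\subsetneq SI_\a(F_\l)$, violating CxLS*. An explicit such pair supported near $k$ exists for every $k\ge 1$ ($k\ge 1$ providing room on both sides of the mode). As Proposition~\ref{prop:CxLS} uses no $\M$-finiteness, this also excludes non-$\M$-finite selective scores, as claimed.

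\textbf{Case~(a) --- the main obstacle.} The cheap argument of case~(b) \emph{fails} for distributions with bounded densities: if $F$ is absolutely continuous and $[a,b]^\intercal\in SI_\a(F)$, then necessarily $F([a,b])=\a$ --- otherwise one could slightly shrink $[a,b]$, retaining coverage $\ge\a$ by boundedness of the density, contradicting minimality of the length --- and from this it follows that $SI_\a$ \emph{does} satisfy the selective CxLS* property on the bounded-density class, so Proposition~\ref{prop:CxLS} is useless here. A genuinely finer, Osband-principle-free argument is needed (Osband's principle being unavailable since the statement also rules out non-$\M$-finite scores). My plan would be to exploit the richness of the class along the lines of Proposition~\ref{prop:selections b}, but carried out for the set-valued functional itself: take $F_0=\text{Unif}[0,1]$, whose shortest $\a$-PIs form the continuum $\{[z,z+\a]^\intercal:z\in[0,1-\a]\}$; for suitable $z$ construct bounded-density distributions $G_z$ whose \emph{unique} shortest $\a$-PI is $[z,z+\a]^\intercal$, so $SI_\a(F_0)\cap SI_\a(G_z)=\{[z,z+\a]^\intercal\}$ and, by CxLS*, $SI_\a\bigl((1-\l)F_0+\l G_z\bigr)=\{[z,z+\a]^\intercal\}$ for $\l\in(0,1)$; then combine the strict-consistency inequalities along several such mixture paths with those along the segments joining the $G_z$ --- exploiting that every expected-score difference is affine in the integrating distribution --- to obtain an over-determined system of sign conditions with no solution. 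The real difficulty, and the step I expect to fight hardest with, is to arrange the configuration so that this affine/continuity step actually bites, rather than degenerating into the harmless case where at the critical mixture neither competing interval is even feasible.

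\textbf{Bookkeeping.} The remaining work is routine: one invokes Lemma~\ref{lem:SI existence} so that $SI_\a$ is everywhere non-empty, checks that every distribution and mixture used stays in the prescribed class (unimodality-with-fixed-mode is preserved by mixing; a mixture of bounded densities is bounded), and tracks carefully where $\M$-finiteness of the score is being used (part~(ii)) and where it must not be (part~(i)).
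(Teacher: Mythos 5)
Your part~(ii) is correct and is, in effect, a self-contained version of the paper's argument: you compute the same piecewise-constant description of $\l\mapsto SI_\a(F_\l)$ on the two-point-mixture path, and then run the affine/continuity contradiction directly, whereas the paper computes the same table of values and then cites \citet[Corollary 3.5]{BrehmerStrokorb2019} to finish. Nothing to object to there.

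For part~(i)(b) you take a genuinely different route. The paper simply refers to \citet[Theorem 3.5]{BrehmerGneiting2020}, whereas you propose a direct violation of the selective CxLS* property and then apply Proposition~\ref{prop:CxLS}. Your sketch is sound and can be completed: exhibit unimodal $F_0,F_1$ on $\N_0$ with common mode $k\geq1$ such that $\max_j F_i(\{j\})<\a$ (no singleton is an $\a$-PI), $F_i(\{k,k+1\})\geq\a$, and $F_0(\{k+1,k+2\})<\a<F_1(\{k+1,k+2\})$. Mixing preserves unimodality with the fixed mode, $SI_\a(F_0)\cap SI_\a(F_1)=\{(k,k+1)^\intercal\}$, and for $\l$ near $1$ also $(k+1,k+2)^\intercal\in SI_\a(F_\l)$, so CxLS* fails. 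You would still have to write explicit numbers and verify the minimal length stays $1$, but this is routine bookkeeping. Your remark that Proposition~\ref{prop:CxLS} needs no $\M$-finiteness, so non-$\M$-finite selective scores are excluded as well, is correct. What each approach buys: the citation is shorter; your construction is elementary and self-contained.

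Part~(i)(a), however, is not proved. Your observation that $SI_\a$ in fact satisfies selective CxLS* on bounded-density classes (because a shortest $\a$-PI then necessarily has coverage exactly $\a$, so the intersection argument goes through) is correct and is a useful sanity check showing that Proposition~\ref{prop:CxLS} cannot help here; but you then only outline a richness argument and yourself flag the crucial continuity/configuration step as the one you expect to fail. That step is exactly where the paper's proof does its actual work, with a concrete construction: taking $G_1=\text{Unif}([0,1])$, $G_2=\text{Unif}([1,2])$, $G_3=\text{Unif}([3,1+2/\a])$, one sets $F_0=\a G_1+(1-\a)G_3$ and $F_1=\tfrac{\a}{2}G_1+\tfrac{\a}{2}G_2+(1-\a)G_3$ and checks that $SI_\a(F_0)=\{(0,1)^\intercal\}$ while along $F_\l=(1-\l)F_0+\l F_1$ one has $(0,2)^\intercal\in SI_\a(F_\l)$ and $(0,1)^\intercal\notin\I_\a(F_\l)$ for every $\l\in(0,1]$; the map $\l\mapsto\bar S\big((0,1)^\intercal,F_\l\big)-\bar S\big((0,2)^\intercal,F_\l\big)$ is then affine yet must change sign at $\l=0$, a contradiction. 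Finiteness of the expected scores, even without $\M$-finiteness, is obtained by decomposing $\bar S(x_i,F')$ over the building blocks $G_1,G_2,G_3$ for auxiliary mixtures $F'$ chosen so that the relevant interval is a shortest $\a$-PI of $F'$. Without an explicit configuration of this kind, your argument for~(i)(a) has a real gap precisely at the point you yourself identified.
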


\begin{proof}\parskip=0pt
  (i) If (b) holds, the assertion is an immediate consequence of \citet[Theorem 3.5]{BrehmerGneiting2020}.

  Suppose that (a) holds.
  Inspired by the argument given in \citet[Section 4.2]{frongillo2012general}, let $\alpha\in(0,1)$ and $G_1,G_2,G_3\in\M$ be the uniform distributions on the intervals $[0,1]$, $[1,2]$, and $[3,1+2/\alpha]$, respectively.
  Let $F_0 = \alpha G_1 + (1-\alpha) G_3$ and $F_1 = \tfrac 1 2 \alpha G_1 + \tfrac 1 2 \alpha G_2 + (1-\alpha) G_3$, and define $x_0 = (0,1)^\intercal$, $x_1 = (0,2)^\intercal$, so that we have $\{x_0\} = SI_\alpha(F_0)$ and $\{x_1\} = SI_\alpha(F_1)$.
  Now define $F_\lambda =  (1-\lambda) F_0 +\lambda F_1 $; by construction, $\{x_1\} = SI_\alpha(F_\lambda)$ for all $0 < \lambda \leq 1$.
  
  Now suppose for a contradiction that some scoring function $S$ was strictly $\M$-consistent for $SI_\alpha$. Here, we also dispense with the assumption that $S$ is $\M$-finite.
  We first argue $\bar S(x_i,G_j) < \infty$ for all $i\in\{0,1\}$ and $j\in\{1,2,3\}$.
  Let $F'=\alpha G_1 + \epsilon G_2 + (1-\alpha-\epsilon) G_3$ for $\epsilon = \tfrac 1 2 \min(\alpha,1-\alpha)$.
  The case $i=0$ follows as $x_0 \in SI_\alpha(F')$, so $\bar S(x_0,F') = \alpha \bar S(x_0,G_1) + \epsilon \bar S(x_0,G_2) + (1-\alpha-\epsilon) \bar S(x_0,G_3) < \infty$, and thus each term must be finite.
  For $i=1$, observing $x_1 \in SI_\alpha(F_1)$, the same reasoning gives the result.
  We conclude that $\bar S(x_i,F_j) < \infty$ for $i,j\in\{0,1\}$, as all constituent terms are finite.

  Define the function $\gamma:[0,1]\to\R$ by $\gamma(\lambda) = \bar S(x_0,F_\lambda) - \bar S(x_1,F_\lambda)$.
  Expanding by linearity of expectation, we have $\gamma(\lambda) =  (1-\lambda) (\bar S(x_0,F_0) - \bar S(x_1,F_0)) +\lambda (\bar S(x_0,F_1) - \bar S(x_1,F_1))$.
  We conclude that $\gamma$ is a continuous function as all terms above are finite.
  By strict $\M$-consistency, we now have $\gamma(\lambda) > 0$ for $0 < \lambda \leq 1$ and $\gamma(0) < 0$, contradicting the continuity of $\gamma$.
  (Cf. \citet[Corollary 4.12]{frongillo2012general}.)
  
 (ii) Without loss of generality, we assume that the mixtures $F_\l = (1-\l)\delta_0 + \l\delta_1$ are in $\M$ for all $\l\in[0,1]$. Using again the convention to identify any interval $[a,b]$ with the vector of its two endpoints $(a,b)^\intercal$, we obtain for $\beta\in(0,1/2]$ and $\gamma \in(1/2,1]$
\begin{align*}
SI_\beta(F_\l) = \begin{cases}
\{(0,0)^\intercal\}, & \l\in[0,\beta) \\
\{(0,0)^\intercal,(1,1)^\intercal\}, & \l\in [\beta,1-\beta] \\
\{(1,1)^\intercal\}, & \l\in(1-\beta,1],
\end{cases}
&&SI_\gamma(F_\l) = \begin{cases}
\{(0,0)^\intercal\}, & \l\in[0,1-\gamma] \\
\{(0,1)^\intercal\}, & \l\in (1-\gamma,\gamma) \\
\{(1,1)^\intercal\}, & \l\in[\gamma,1].
\end{cases}
\end{align*}
Clearly, for $\a\in(0,1]$, $SI_\a$ is non-constant on $\{F_\l\,|\,\l\in[0,1]\}$ and attains only finitely many values. Therefore,
as a direct consequence of \citet[Corollary 3.5]{BrehmerStrokorb2019}, $SI_\a$ there is no strictly $\M$-consistent and $\M$-finite exhaustive score for $SI_\a$. 
\end{proof}

While Theorem~\ref{prop:SI} (i) considers the question of selective elicitability of $SI_\a$ and is in line with the findings of \cite{BrehmerGneiting2020}, there is no counterpart to part (ii) dedicated to the exhaustive elicitability of $SI_\a$.

\begin{rem}
The classes $\M$ specified in (i) and (ii) of Theorem~\ref{prop:SI} are not contained in $\M_{\a,\text{\rm inc}}$, which makes it hard to thoroughly compare the results of Theorem~\ref{prop:SI} with Theorem~\ref{thm:intervals_class}. The elicitability of $SI_\a$ on $\M_{\a,\text{\rm inc}}$ remains an open problem, though we conjecture a negative result.\\
This should also be compared with the discussion of \citet[Condition 3.7 and Theorem 3.8]{BrehmerGneiting2020}. The class considered in their Theorem 3.8 is also not contained in $\M_{\a,\text{\rm inc}}$.
In particular, they also leave open the problem of elicitability on classes of distributions with strictly positive Lebesgue densities.
\end{rem}

\begin{rem}\label{rem:double failure}
On any class $\M$ satisfying the conditions of Theorem \ref{prop:SI} (i) \emph{and} (ii) $SI_\a$ fails to be selectively elicitable and exhaustively elicitable. This yields an interesting set-valued functional which fails to be elicitable in either sense.
\end{rem}

\begin{rem}\label{rem:region}
One may also consider general prediction \emph{regions} rather than merely intervals, in which case a natural object to study is the $\a$-prediction region with smallest Lebesgue-measure.
One can employ a very similar argument to the one used in the proof of Theorem~\ref{prop:SI} (ii) to rule out the exhaustive elicitability (with an $\M$-finite score) of the class of $\a$-prediction regions with minimal Lebesgue measure, denoted by $SR_\a$. Indeed, again writing $F_\l = (1-\l)\delta_0 + \l\delta_1$, we obtain 
for $\beta\in(0,1/2]$ and $\gamma \in(1/2,1]$
\begin{align*}
SI_\beta(F_\l) = \begin{cases}
\{\{0\}\}, & \l\in[0,\beta) \\
\{\{0\},\{1\}\}, & \l\in [\beta,1-\beta] \\
\{\{1\}\}, & \l\in(1-\beta,1],
\end{cases}
&&SI_\gamma(F_\l) = \begin{cases}
\{\{0\}\}, & \l\in[0,1-\gamma] \\
\{\{0,1\}\}, & \l\in (1-\gamma,\gamma) \\
\{\{1\}\}, & \l\in[\gamma,1].
\end{cases}
\end{align*}
The rest of the argument follows the lines of the proof.
\end{rem}

\begin{prop}\label{prop:sel id}
  The shortest $\a$-prediction interval is selectively identifiable on $\M_{\text{\rm inc,\,cont}}$ in the following sense.
  Let $[-1,1]^{\R}$ denote the space of all functions $\R \to [-1,1]$, and define the function-valued identification function $V\colon U\times \R\to [-1,1]^{\R}$ by
\[
 \R  \ni a \mapsto V\big((x_1,x_2)^\intercal ,y\big)(a) = \one\{y\in[x_1+a,x_2+a]\} -\a\,,
\]
for $(x_1,x_2)^\intercal \in U$ and $y\in \R$.
Then for any $F\in \M_{\text{\rm inc,\,cont}}$ and any $(x_1,x_2)^\intercal \in U$ it holds that $(x_1,x_2)^\intercal \in SI_\a(F)$ if and only if
\be{eq:cond id}
\bar V\big((x_1,x_2)^\intercal ,F\big)(a)\le0 \quad \forall a\in\R \quad \text{and }  \quad \bar V\big((x_1,x_2)^\intercal ,F\big)(0)=0\,.
\ee
\end{prop}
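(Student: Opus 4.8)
The plan is to reduce \eqref{eq:cond id} to two statements about the $F$-mass of intervals and then compare them with the definition \eqref{eq:SI} of $SI_\a$. Since $F\in\M_{\text{\rm inc,\,cont}}$ is continuous, $\bar V\big((x_1,x_2)^\intercal,F\big)(a)=F\big([x_1+a,x_2+a]\big)-\a$, so with $\ell:=x_2-x_1$ the first part of \eqref{eq:cond id} says that \emph{no} closed interval of length $\ell$ has $F$-mass exceeding $\a$, while the second part says $F([x_1,x_2])=\a$ exactly. I would isolate two elementary facts. \emph{Shrinking:} for continuous $F$, the map $d'\mapsto F([c,d'])$ is continuous and vanishes at $d'=c$, hence if $F([c,d])>\a$ the intermediate value theorem gives some $d'\in(c,d)$ with $F([c,d'])=\a$. \emph{Strictness:} for strictly increasing $F$, $d'<d$ implies $F([c,d'])<F([c,d])$. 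Finally, I recall from the boundedness remark preceding \eqref{eq:SI} that $\I_\a(F)$ contains a bounded interval, so the infimum $L$ of the lengths of bounded intervals in $\I_\a(F)$ is finite, and by Lemma~\ref{lem:SI existence} it is attained; consequently $(x_1,x_2)^\intercal\in SI_\a(F)$ is equivalent to $F([x_1,x_2])\ge\a$ together with $x_2-x_1=L$.

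For the forward direction, let $(x_1,x_2)^\intercal\in SI_\a(F)$, so $\ell=L$ and $F([x_1,x_2])\ge\a$. If $F([x_1,x_2])>\a$, the shrinking fact yields a bounded interval $[x_1,d']\in\I_\a(F)$ with $d'-x_1<\ell=L$, contradicting minimality; hence $F([x_1,x_2])=\a$. Likewise, if some interval $[c,c+\ell]$ had $F$-mass greater than $\a$, shrinking would again produce a bounded interval in $\I_\a(F)$ of length strictly below $L$, a contradiction; hence every length-$\ell$ interval has $F$-mass at most $\a$. This establishes both conditions in \eqref{eq:cond id}.

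For the converse, assume both conditions of \eqref{eq:cond id}. Then $F([x_1,x_2])=\a$ gives $(x_1,x_2)^\intercal\in\I_\a(F)$, and it remains to rule out shorter members of $\I_\a(F)$. Suppose $(c,d)^\intercal\in\I_\a(F)$ with $c,d\in\R$ and $d-c<\ell$. Then $d<c+\ell$, so $[c,d]$ is a proper sub-interval of $[c,c+\ell]$; the first condition of \eqref{eq:cond id} gives $F([c,c+\ell])\le\a$, and the strictness fact gives $F([c,d])<F([c,c+\ell])$, whence $F([c,d])<\a$, contradicting $(c,d)^\intercal\in\I_\a(F)$. Therefore $x_2-x_1\le d-c$ for every bounded $(c,d)^\intercal\in\I_\a(F)$, so $x_2-x_1=L$ and $(x_1,x_2)^\intercal\in SI_\a(F)$.

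The crux — and the one place the hypotheses on $F$ are genuinely used — is the converse: continuity alone would only yield $F([c,d])\le\a$, which does not contradict $(c,d)^\intercal\in\I_\a(F)$, so strict monotonicity is indispensable to turn the inclusion $[c,d]\subsetneq[c,c+\ell]$ into a strict mass decrease (a distribution with a density vanishing on a sub-interval would otherwise violate the claimed equivalence). Continuity is in turn what powers the shrinking step in the forward direction. The case $\a=1$ is vacuous over $U$, since a strictly increasing $F$ assigns mass strictly below $1$ to every bounded interval, so both sides of the equivalence are empty.
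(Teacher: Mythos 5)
Your proof is correct and takes essentially the same route as the paper's: both directions hinge on the same two facts (continuity of $F$ for the shrinking step, strict monotonicity for the strict mass decrease of a proper subinterval), and you apply them to the translated intervals $[x_1+a,x_2+a]$ exactly as the paper does, merely avoiding the explicit $\Gamma_\a$ notation. The paper dismisses the ``membership implies conditions'' direction as immediate, whereas you spell it out; otherwise the substance and the structure are the same.
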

\begin{proof}
  Let $F\in \M_{\text{\rm inc,\,cont}}$ and $(x_1,x_2)^\intercal \in U$ such that \eqref{eq:cond id} holds. $\bar V\big((x_1,x_2)^\intercal ,F\big)(0)=0$ implies that $(x_1,x_2)^\intercal \in \I_\a(F)$ and $x_2 = \Gamma_\a(F)(x_1)$ such that there is no shorter $\a$-prediction interval for $F$ with a lower endpoint of $x_1$.
  The condition $\bar V\big((x_1,x_2)^\intercal ,F\big)(a)\le0$ for all $a\in\R$
  means that all other intervals $[x_1+a, x_2+a]$ with the same length either fail to be an $\a$-prediction interval (corresponding to a strict inequality), or they are also an $\a$-prediction interval (corresponding to equality),
  but with the same logic as above, there cannot be a shorter one with the same lower endpoint $x_1+a$. Hence, we can conclude that $(x_1,x_2)^\intercal \in SI_\a(F)$. Vice versa, if $(x_1,x_2)^\intercal \in SI_\a(F)$, then \eqref{eq:cond id} is immediate.
\end{proof}

In closing, we would like to remark that for multivariate observations, a generalisation from prediction intervals to prediction regions is mandatory.
If we do not impose any restrictions other than measurability, say, we can still obtain a selective identifiability result in the spirit of Theorem~\ref{thm:intervals_class}~(i).
For other similar extensions of our results, Remark \ref{rem:region} points into a negative direction for the smallest prediction regions. 
For considerations analogue to the ones in Subsection \ref{subsec:midpoint} one would need to impose further restrictions on the shape of the regions (e.g.\ one might consider balls with a certain centre and radius) to ask sensible questions. 
This is beyond the scope of the current project. On the other hand, the following section elaborates on a complementary direction of Vorob'ev quantiles, which only become interesting in a multivariate / spatial setting.

\section{Vorob'ev quantiles}
\label{sec:Vorob'ev Quantiles}

	As \cite{AzzimontiETAL2018} point out, the ``problem of estimating the set of inputs that leads a system to a particular behavior is common in many applications'', and they explicitly mention the fields of reliability engineering and climatology (see references therein). In such a context, the quantity of interest is a \emph{random set} $\bY$. This set could specify the region of a blackout in a country, the area affected by an avalanche in the mountains or tumorous tissue in the human body. In many situations such as extreme weather events, e.g.\ floods, storms or heatwaves, the random set $\bY$ is specified in terms of an excursion set $\{z\in\R^d\,|\,\xi_z\ge t\}$, $t\in\R$, of some random field $(\xi_z)_{z\in\R^d}$. 
A main task in mathematical statistics is to construct confidence intervals or confidence regions in $\R^d$ from a random sample. Consequently, such confidence regions may also be considered as random sets.
Functionals of interest are various expectations of $\bY$ as described in the comprehensive textbook \cite{Molchanov2017}, notably, the Vorob'ev expectation \citep{ChevalierETAL2013}, the distance average expectation \citep{AzzimontiETAL2016} and conservative estimates based on Vorob'ev quantiles \citep{AzzimontiETAL2018}. 

In this section, we shall focus on Vorob'ev quantiles and shall notably establish exhaustive elicitability results and related selective identifiability results under reasonable conditions. In that respect, it generalises and extends the known result that the symmetric difference in measure is an exhaustive consistent scoring function for the median; see Proposition 2.2.8 in \cite{Molchanov2017} and below for details.\\

To settle some notation, we work again on some suitable complete, atomless probability space $(\Omega, \mathfrak F, \bP)$. Let $E$ be some generic separable Banach space equipped with its Borel $\sigma$-algebra, with the Euclidean space as a leading example. Let $\mu$ be some $\sigma$-finite non-negative reference measure on $E$ and let $\mathfrak U$ be the family of closed subsets of $E$. We shall use the convention to denote any subset of $E$ with a capital latin letter, with the additional distinction that a random set will be denoted with a bold capital letter.
\begin{defn}[Random closed set]\label{defn:random set}
$\bY\colon\Omega\to\mathfrak U$ is called a \emph{random closed set} if for all compact sets $K\subseteq E$ 
\[
\left\{\omega\,|\, \bY(\omega)\cap K\neq\emptyset\right\}\in\mathfrak F.
\]
\end{defn}
In decision-theoretic terminology, that means that our observation domain $\O$ coincides with $\mathfrak U$. In line with Definition~\ref{defn:random set} and following  \citet[Chapter 1]{Molchanov2017}, we equip $\mathfrak U$ with the $\sigma$-algebra generated by the family $\mathfrak B(\mathfrak U):=\{ U \in \mathfrak U \colon U\cap K \neq \emptyset, \ K\ \in\mathfrak K\}$ where $\mathfrak K$ is the collection of all compact subsets of $E$. Consequently, we shall identify the distribution $F_{\bY}$ of a random closed set $\bY$ with its \emph{capacity functional}. That is, we set $F_{\bY}\colon \mathfrak K \to [0,1]$, $F_{\bY}(K) = \bP(\bY \cap K\neq \emptyset)$. As before, let $\M$ denote some generic class of distributions $\mathfrak K \to [0,1]$. \\
While $F_{\bY}$ characterises the whole (joint) distribution of a random closed set $\bY$, its restriction to singletons, in some sense, specifies the marginal distributions of $\bY$. This restriction is called \emph{coverage function} $p_\bY\colon E\to\left[0,1\right]$ and is formally defined as 
\[
p_\bY(u) := F_{\bY}(\{u\}) 
=\bP(u\in\mathbf{Y}).
\]
Finally, we can define the Vorob'ev quantiles of closed random sets.
\begin{defn}[Vorob'ev quantile]
The upper excursion set of $p_\mathbf{Y}$ at level $\alpha\in\left[0,1\right]$, 
\[
Q_\alpha(\mathbf{Y})\coloneqq\left\{u\in E\,|\, p_\bY(u)\geq\alpha\right\},
\]
is called the \emph{Vorob'ev $\alpha$-quantile} of $\bY$.
\end{defn}
The Vorob'ev $\alpha$-quantile plays a special role in the context of confidence regions. Suppose $\bY = g_\a(Y_1, \ldots, Y_n)$ where $Y_1, \ldots, Y_n$ are i.i.d.\ random vectors in $\R^m$ following some parametric distribution $F(\theta)$, $\theta\in\Theta\subseteq\R^k$, and $g_\a\colon(\R^m)^n \to \mathfrak U$ is a measurable map. In this context, $E$ clearly corresponds to $\Theta$. Then one can say that $\bY$ constitutes an $\a$-confidence region for the parameter $\theta$ if $\theta \in Q_\alpha(\bY)$ for all $\theta\in\Theta$. 

As $p_\bY$ is an upper semicontinuous function  \citep{Molchanov2017}, $Q_\alpha(\bY)$ is a closed set in $E$. Therefore, in a decision-theoretic terminology, we set the exhaustive action domain to be $\mathfrak U$ and the selective action domain to be $E$.
Moreover, for further reference, define the sets
\begin{align*}
Q_\alpha^>(\bY) = \{u\in E \,|\, p_{\bY}(u) > \alpha\}, &&
Q_\alpha^=(\bY) = \{u\in E \,|\, p_{\bY}(u) = \alpha\}.
\end{align*}
Note that the measurability of these sets is implied by the upper-semicontinuity (and thus measurability) of $p_\bY$. It goes without saying that the quantities $Q_\a$, $Q_\a^>$ and $Q_\a^=$ are law-invariant functionals in that they only depend on the distribution $F_{\bY}$ of a random closed set $\bY$, and, \emph{a fortiori}, on its coverage function $p_\bY$. Therefore, we shall consider them as maps defined on some generic specification of distributions $\M$.

Proposition 2.2.8 in \cite{Molchanov2017} establishes that the symmetric difference in measure
\be{eq:sym diff}
S_{1/2} \colon \mathfrak U \times \mathfrak U \to \R, \qquad S_{1/2}(X,Y)=\tfrac{1}{2}\mu(X\triangle Y)
\ee
is an $\M$-consistent exhaustive scoring function for the Vorob'ev median $Q_{1/2}(\bY)\colon\M\to \mathfrak U$. 
Other Vorob'ev quantiles solve a restricted minimisation problem; see Proposition 4 in \cite{AzzimontiETAL2018}. More precisely, for $\alpha\in[0,1]$, $Q_\alpha = Q_\alpha(\bY)$ it holds that 
\[
\E\big[\tfrac{1}{2}\mu(Q_\alpha \triangle \bY)\big] \le \E\big[\tfrac{1}{2}\mu(M \triangle \bY)\big]
\]
for all measurable sets $M\subseteq E$ such that $\mu(M) = \mu(Q_\alpha)$.
To arrive at a consistent scoring function for a general $\alpha\in\left[0,1\right]$, we first introduce a strict selective $\M$-identification function for $Q_\alpha^=$.
\begin{prop}\label{prop:identifQ=}
For $\alpha\in[0,1]$,
the function $V_\a\colon E\times\mathfrak U\to\R$, $V_\a(u,Y)=\one_Y(u)-\alpha$,
is a strict selective $\M$-identification function for $Q_\alpha^=$. Moreover, $V_\a$ is oriented in the sense that for all $u\in E$ and for all $F\in\M$
\[
\bar V_\a(u,F)
\begin{cases}
>0, & u\in Q_\alpha^>(F) \\
=0, & u \in Q_{\alpha}^=(F) \\
<0, & u \notin Q_\alpha(F).
\end{cases}
\]
\end{prop}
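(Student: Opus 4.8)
The plan is to verify the defining property of a strict selective identification function directly from the formula for $\bar V_\a(u,F)$, together with the stronger ``oriented'' claim. First I would compute the expected identification function: for $u\in E$ and $F=F_\bY\in\M$,
\[
\bar V_\a(u,F) = \E[\one_\bY(u)] - \a = \bP(u\in\bY) - \a = p_\bY(u) - \a.
\]
This single identity does all the work. The orientation claim is then immediate by definition of the three sets: $u\in Q_\a^>(F)$ means $p_\bY(u)>\a$, i.e.\ $\bar V_\a(u,F)>0$; $u\in Q_\a^=(F)$ means $p_\bY(u)=\a$, i.e.\ $\bar V_\a(u,F)=0$; and $u\notin Q_\a(F)$ means $p_\bY(u)<\a$, i.e.\ $\bar V_\a(u,F)<0$. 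In particular $\bar V_\a(u,F)=0 \iff p_\bY(u)=\a \iff u\in Q_\a^=(F)$, which is exactly the strictness condition \eqref{eq:V selective def} for the functional $Q_\a^=$ with selective action domain $E$ and $k=1$; the non-strict identification property $\bar V_\a(u,F)=0$ for all $u\in Q_\a^=(F)$ is a special case of this equivalence.

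The only points requiring a word of care are measurability and integrability, so that $\bar V_\a(u,F)$ is well-defined and finite. For fixed $u$, the map $\omega\mapsto \one_{\bY(\omega)}(u) = \one\{u\in\bY(\omega)\}$ is $\{0,1\}$-valued; its measurability follows because $\{u\}$ is compact and hence $\{\omega : \bY(\omega)\cap\{u\}\neq\emptyset\}\in\mathfrak F$ by Definition~\ref{defn:random set}. Being bounded, it is trivially integrable, and its expectation is $\bP(u\in\bY)=p_\bY(u)$ by the very definition of the coverage function given just above the statement. Thus $V_\a$ indeed maps into $\R$ and the expectation exists and is finite for every $u$ and every $F\in\M$, as required by Definition~\ref{defn:identification functions}.

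I do not anticipate a genuine obstacle here: this proposition is a direct unwinding of the definitions of the coverage function $p_\bY$ and of the sets $Q_\a$, $Q_\a^>$, $Q_\a^=$, and the argument is essentially one line once one records $\bar V_\a(u,F)=p_\bY(u)-\a$. If anything, the only subtlety worth flagging in the write-up is that strictness is stated with respect to $Q_\a^=$ (the level set where $p_\bY$ \emph{equals} $\a$) rather than the Vorob'ev quantile $Q_\a$ itself (the super-level set), which is why a plain indicator minus $\a$ suffices and no further construction is needed; the orientation statement then records how this identification function simultaneously ``sees'' $Q_\a$ as the region where $\bar V_\a\ge 0$.
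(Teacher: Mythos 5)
Your proof is correct and matches the paper's approach exactly: the paper says the result ``follows directly from the definition of $p_{\bY}$,'' and your computation $\bar V_\a(u,F)=p_\bY(u)-\a$ followed by unwinding the definitions of $Q_\a^>$, $Q_\a^=$, and $Q_\a$ is precisely that argument. The added remarks on measurability and integrability of $\one_{\bY}(u)$ are a harmless elaboration of the same route, not a different one.
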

\begin{proof}
The proof follows directly from the definition of $p_{\bY}$.
\end{proof}
This oriented strict $\M$-identification function for $Q_\alpha^=$ turns out to be the main building block in the construction of an exhaustive $\M$-consistent scoring function for $Q_\alpha$. The rationale is akin to the ones presented for the scalar case by \cite{Ziegel_Discussion2016}, \cite{Dawid_Discussion2016} and the multivariate case in \citet[Section 3.2]{FisslerHlavinovaRudloff_RM}. 
\begin{prop}\label{prop:consistency}
Let $\mathfrak{U}_0\colon=\left\{M\in\mathfrak U\,|\, \mu(M)<\infty\right\}$. For any $\alpha\in[0,1]$ with $Q_\alpha(F)\in\mathfrak U_0$ for all $F\in\M$ 
the function $\widetilde S_\a\colon\mathfrak U_0 \times\mathfrak U\to\R$
\be{eq:integrated score}
\widetilde S_\a(X, Y) = -\int_X V_\a(u, Y)\, \mu(\diff u)= \alpha \mu(X) - \mu(Y\cap X) 
\ee
is an $\M$-consistent exhaustive scoring function for $Q_\alpha$. More precisely, it holds that for all $F\in\M$
\begin{multline}\label{eq:argmin}
\argmin_{X\in\mathfrak U} \E_F\left[\widetilde S_\a(X,\bY)\right]\\
= \{X\in \mathfrak U_0\,|\, \exists D\subseteq E \ \text{\rm measurable}\colon
Q^>_\alpha(F) \subseteq D \subseteq Q_\alpha(F) \ \text{\rm and} \  
\mu(X\triangle D)=0\}\,.
\end{multline}
\end{prop}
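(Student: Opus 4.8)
The plan is to exploit the oriented identification function $V_\a$ from Proposition~\ref{prop:identifQ=} together with a Fubini interchange, mirroring the structure of the proof of Theorem~\ref{thm:intervals_class}(ii)--(iii). First I would fix $F\in\M$ and write $\bar{\widetilde S}_\a(X,F) = \E_F[\widetilde S_\a(X,\bY)] = -\int_X \bar V_\a(u,F)\,\mu(\diff u)$, which is legitimate by Fubini since $\mu(X)<\infty$ for $X\in\mathfrak U_0$ and $|V_\a|\le 1$. Then for two sets $X, X'\in\mathfrak U_0$ I would compute the difference
\[
\bar{\widetilde S}_\a(X,F) - \bar{\widetilde S}_\a(X',F) = \int_{X'\setminus X}\bar V_\a(u,F)\,\mu(\diff u) - \int_{X\setminus X'}\bar V_\a(u,F)\,\mu(\diff u).
\]
By the orientation property, $\bar V_\a(u,F)>0$ on $Q_\a^>(F)$, $=0$ on $Q_\a^=(F)$, and $<0$ on the complement of $Q_\a(F)$. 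So to minimise $\bar{\widetilde S}_\a(\cdot,F)$ one wants $X$ to contain all of $Q_\a^>(F)$ (where the integrand is strictly positive, so including those points strictly decreases the score), to exclude everything outside $Q_\a(F)$ (where the integrand is strictly negative, so including those points strictly increases the score), while the ``boundary'' set $Q_\a^=(F)$ is free since the integrand vanishes there $\mu$-almost everywhere is not automatic---one only knows $\bar V_\a = 0$ pointwise on $Q_\a^=(F)$, which is exactly what is needed.

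Concretely, I would establish \eqref{eq:argmin} by a two-sided argument. For the inclusion ``$\supseteq$'': if $X\in\mathfrak U_0$ satisfies $\mu(X\triangle D)=0$ for some measurable $D$ with $Q_\a^>(F)\subseteq D\subseteq Q_\a(F)$, then $\bar{\widetilde S}_\a(X,F) = -\int_D \bar V_\a(u,F)\,\mu(\diff u) = -\int_{Q_\a^>(F)}\bar V_\a(u,F)\,\mu(\diff u)$, using that $\bar V_\a$ vanishes on $D\setminus Q_\a^>(F)\subseteq Q_\a^=(F)$. For an arbitrary competitor $X'\in\mathfrak U$: if $\mu(X')=\infty$ the score is $\alpha\mu(X')-\mu(\bY\cap X')$; I need to check this is $\ge$ the finite optimal value, which follows because $\mu(Q_\a^>(F))<\infty$ and $\alpha\mu(X') - \mu(Y\cap X')\ge \alpha\mu(X'\setminus Q_\a(F)) + \alpha\mu(X'\cap Q_\a^=(F)) - (\text{bounded term on }Q_\a^>(F))$, so one argues the score is $+\infty$ or at least not smaller; if $\mu(X')<\infty$ then $\bar{\widetilde S}_\a(X',F) - \bar{\widetilde S}_\a(X,F) = \int_{X\setminus X'}\bar V_\a\,\diff\mu - \int_{X'\setminus X}\bar V_\a\,\diff\mu \ge 0$ because $X\setminus X'$ is ($\mu$-a.e.) contained in $Q_\a^>(F)\cup Q_\a^=(F)$ where $\bar V_\a\ge 0$, and $X'\setminus X$ meets $Q_\a^>(F)$ only on a $\mu$-null set (since $Q_\a^>(F)\subseteq D$ and $\mu(X\triangle D)=0$) and is otherwise in $Q_\a^=(F)\cup Q_\a(F)^c$ where $\bar V_\a\le 0$. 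This gives $\M$-consistency and shows every such $X$ is a minimiser.

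For the reverse inclusion ``$\subseteq$'', suppose $X^\ast\in\mathfrak U$ is a minimiser; necessarily $\mu(X^\ast)<\infty$ (else, by the above, one could strictly improve by intersecting with a large ball, using $\mu$ $\sigma$-finite, since removing mass outside $Q_\a(F)$ strictly lowers the score and $Q_\a^>(F)$ has finite measure). Comparing $X^\ast$ with $X^\ast \cup Q_\a^>(F)$ and with $X^\ast\cap Q_\a(F)$ and using strict positivity/negativity of $\bar V_\a$ off $Q_\a^=(F)$, I conclude $\mu(Q_\a^>(F)\setminus X^\ast)=0$ and $\mu(X^\ast\setminus Q_\a(F))=0$. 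Setting $D := (X^\ast\cap Q_\a(F))\cup Q_\a^>(F)$ then gives a measurable set with $Q_\a^>(F)\subseteq D\subseteq Q_\a(F)$ and $\mu(X^\ast\triangle D)=0$, as required. I expect the main obstacle to be the bookkeeping with sets of measure zero and the $\mu(X')=\infty$ case: one must be careful that ``$\mu(X\triangle D)=0$'' is the right equivalence relation (closed sets differing by null sets can have different $\widetilde S_\a$-values only if $\bar V_\a$ is nonzero on the symmetric difference, which is ruled out), and that $\sigma$-finiteness of $\mu$ genuinely lets one truncate infinite-measure competitors without loss. The orientation lemma does essentially all the analytic work; the rest is measure-theoretic care.
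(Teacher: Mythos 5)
Your proposal is correct and follows essentially the same route as the paper: a Fubini interchange (the paper cites Robbin's theorem for this) to reduce to the sign of $\bar V_\alpha$, the score-difference decomposition into the regions $Q_\alpha^>(F)\setminus M$ and $M\setminus Q_\alpha(F)$ where the orientation gives strict signs, and for the reverse inclusion the same choice $D=(X^*\cap Q_\alpha(F))\cup Q_\alpha^>(F)$. The worry you raise about competitors with $\mu(X')=\infty$ does not actually arise, because the scoring function is defined only on $\mathfrak U_0\times\mathfrak U$ and the $\argmin$ is implicitly taken over $\mathfrak U_0$, which is how the paper handles it.
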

\begin{proof}
First note that -- if we extend $\widetilde S_\a$ to the family of measurable subsets of $E$ with finite measure -- it holds that for any such $D\subseteq E$ we have $\widetilde S_\a(X,Y) = \widetilde S_\a(D,Y)$ whenever $\mu(X\triangle D)=0$. 
Now, let $X\in \mathfrak U_0$ such that $\mu(X\triangle D)=0$ for some measurable $Q^>_\alpha(F) \subseteq D \subseteq Q_\alpha(F)$. Then, invoking Robbin's Theorem \cite[Theorem 1.5.16]{Molchanov2017}, it holds that for any 
$M\in\mathfrak U_0$ and any $F\in\M$
\begin{align*}
\E_F\left[\widetilde S_\a(M,\bY)-\widetilde S_\a(X,\bY)\right]
&= \E_F\left[\widetilde S_\a(M,\bY)-\widetilde S_\a(D,\bY)\right]\\
&=\int\limits_{D}\E_F\left[V_\a(u,\bY)\right]\mu(\diff u)-\int\limits_M\E_F\left[V_\a(u,\bY)\right]\mu(\diff u)\\
&=\int\limits_{Q_\alpha^>(F)\setminus M}\bar V_\a(u,F)\mu(\diff u)-\int\limits_{M\setminus Q_\alpha(F)}\bar V_\a(u,F)\mu(\diff u)\geq0,
\end{align*}
where the last inequality follows from the orientation of $V_\a$. Moreover, the inequality is strict if and only if $\mu(Q_\alpha^>(F)\setminus M) + \mu(M\setminus Q_\alpha(F))>0$, which establishes equality in \eqref{eq:argmin}. Indeed, for any $M\in\mathfrak U$ with $\mu(Q_\alpha^>(F)\setminus M) + \mu(M\setminus Q_\alpha(F))=0$ choose $D=(M\cap Q_\alpha(F))\cup Q^>_\alpha(F)$. Then $D$ is measurable and it can be easily verified that $Q_\alpha^>(F)\subseteq D\subseteq Q_\alpha(F)$. Moreover, $D\setminus M= Q_\a^>(F)\setminus M$ and $M\setminus D=M\setminus Q_\a(F)$, so that $\mu(M\triangle D)=0$.
\end{proof}
Proposition~\ref{prop:consistency} and in particular the equality in \eqref{eq:argmin} exactly quantify by how much the score $\widetilde S_\a$ fails to be strictly consistent for $Q_\alpha$. 
Moreover, in contrast to the symmetric difference in measure in \eqref{eq:sym diff}, the score $\widetilde S_\a$ in \eqref{eq:integrated score} assumes both negative and positive values in general. Imposing the normalisation condition 
that $S_\a(Y,Y)=0$ for all $Y\in\mathfrak U$, which implies the non-negativity of $S_\a$, the score $\widetilde S_\a$ in \eqref{eq:integrated score} is equivalent to 
\begin{align*}
S_\a(X,Y) = \widetilde S_\a(X,Y) +(1- \alpha) \mu(Y) 
  &= \alpha \mu(X\setminus Y) +(1-\alpha) \mu(Y\setminus X).
\end{align*}
Moreover, one can see that one really retrieves the symmetric difference in measure for $\a=1/2$.
The following theorem states conditions for the \emph{strict} consistency of $S_\a$. In the sequel we denote the closure of any set $M\subseteq E$ with $\cl(M)$ and its interior with $\interior(M)$. 

\begin{thm}\label{thm:mixture repre}
Let $\alpha\in[0,1]$. 
\begin{enumerate}[\rm (i)]
\item
For any $u\in E$ the elementary score $S_{\alpha,u}\colon\mathfrak U\times\mathfrak U\to[0,\infty)$,
\be{eq:elementary}
S_{\alpha,u}(X,Y)=\alpha\one_{X\setminus Y}(u)+(1-\alpha)\one_{Y\setminus X}(u),
\ee
is a non-negative exhaustive $\M$-consistent scoring function for $Q_\alpha$.
\item
Let $\pi$ be a $\sigma$-finite non-negative measure on $E$. Then the map $S_{\alpha,\pi}\colon\mathfrak U\times \mathfrak U\to[0,\infty]$,
\be{eq:mixture}
S_{\alpha,\pi}(X,Y)=\int S_{\alpha,u}(X,Y)\pi(\diff u)
=\alpha \pi(X\setminus Y) +(1-\alpha) \pi(Y\setminus X)
\ee
is a non-negative exhaustive $\M$-consistent scoring function for $Q_\alpha$.
\item
If $\M$ is such that $Q_\a(F)=\cl(Q_\alpha^>(F))$ and $Q_\a(F)=\cl(\interior(Q_\a(F))$ for all $F\in\M$, then $Q_\alpha$ is exhaustively elicitable on $\M$. \\
Moreover, for any $\sigma$-finite positive measure $\pi$ (that is, $\pi$ assigns positive mass to all open non-empty sets) on $E$ such that $\E_F[\pi(\bY)]<\infty$ and $\pi(Q_\a(F))<\infty$ for all $F\in\M$, the restriction of 
$S_{\alpha,\pi}$ defined in \eqref{eq:mixture} to the family $\mathfrak{U}'\colon=\left\{U\in\mathfrak U\,|\,U=\cl(\interior(U))\right\}$ is a strictly $\M$-consistent exhaustive scoring function for $Q_\alpha$.
\end{enumerate}
\end{thm}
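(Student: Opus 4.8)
The plan is to take the three parts in order; (i) and (ii) are short computations and (iii) carries the weight.

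For (i), fix $u\in E$ and $F\in\M$. Non-negativity of $S_{\alpha,u}$ is immediate. Writing $\one_{X\setminus Y}(u)=\one_X(u)\bigl(1-\one_Y(u)\bigr)$ and $\one_{Y\setminus X}(u)=\bigl(1-\one_X(u)\bigr)\one_Y(u)$ and taking expectations with $\E_F[\one_{\bY}(u)]=p_{\bY}(u)$ gives $\bar S_{\alpha,u}(X,F)=\alpha\bigl(1-p_{\bY}(u)\bigr)$ when $u\in X$ and $\bar S_{\alpha,u}(X,F)=(1-\alpha)p_{\bY}(u)$ when $u\notin X$. Since $\alpha(1-p)\le(1-\alpha)p$ iff $p\ge\alpha$, including $u$ in $X$ is weakly optimal exactly for $u\in Q_\alpha(F)$, so $Q_\alpha(F)$ minimises $\bar S_{\alpha,u}(\cdot,F)$, i.e.\ $S_{\alpha,u}$ is $\M$-consistent for $Q_\alpha$. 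For (ii), $S_{\alpha,\pi}\ge0$ is clear, and by Tonelli's theorem $\bar S_{\alpha,\pi}(X,F)=\int\bar S_{\alpha,u}(X,F)\,\pi(\diff u)$ for every $X\in\mathfrak U$; integrating the pointwise inequality from (i) against $\pi$ preserves it, so $Q_\alpha(F)$ minimises $\bar S_{\alpha,\pi}(\cdot,F)$.

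For (iii) I would proceed as follows. The hypothesis $Q_\alpha(F)=\cl(\interior(Q_\alpha(F)))$ ensures $Q_\alpha(F)\in\mathfrak U'$, so $Q_\alpha$ maps into the restricted action domain. Next, put $\widetilde S_{\alpha,\pi}(X,Y)=\alpha\pi(X)-\pi(X\cap Y)=-\int_X V_\alpha(u,Y)\,\pi(\diff u)$; a short calculation gives $S_{\alpha,\pi}(X,Y)=\widetilde S_{\alpha,\pi}(X,Y)+(1-\alpha)\pi(Y)$, and since $\E_F[(1-\alpha)\pi(\bY)]<\infty$ the two scores are equivalent on $\M$, so it suffices to show strict $\M$-consistency of $\widetilde S_{\alpha,\pi}$ on $\mathfrak U'$. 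By Robbins' theorem $\bar{\widetilde S}_{\alpha,\pi}(X,F)=\int_X\bigl(\alpha-p_{\bY}(u)\bigr)\,\pi(\diff u)$, finite whenever $\pi(X)<\infty$ (using $\pi(Q_\alpha(F))<\infty$), so for any $X\in\mathfrak U'$
\[
\bar{\widetilde S}_{\alpha,\pi}(X,F)-\bar{\widetilde S}_{\alpha,\pi}(Q_\alpha(F),F)=\int_{X\setminus Q_\alpha(F)}(\alpha-p_{\bY})\,\diff\pi+\int_{Q_\alpha(F)\setminus X}(p_{\bY}-\alpha)\,\diff\pi\ \ge\ 0,
\]
both integrands being non-negative by the definition of $Q_\alpha(F)$. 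Equality forces $\pi(X\setminus Q_\alpha(F))=0$ and $\pi(Q_\alpha^>(F)\setminus X)=0$. It remains to upgrade this to $X=Q_\alpha(F)$. From $\pi(X\setminus Q_\alpha(F))=0$ and positivity of $\pi$, the open set $\interior(X)\cap Q_\alpha(F)^{c}$ is empty, so $\interior(X)\subseteq Q_\alpha(F)$ and hence $X=\cl(\interior(X))\subseteq Q_\alpha(F)$. For the converse, if some $u_0\in Q_\alpha(F)\setminus X$ existed, then since $Q_\alpha(F)=\cl(\interior(Q_\alpha(F)))$ and $X^{c}$ is open, $G:=\interior(Q_\alpha(F))\cap X^{c}$ would be a non-empty open subset of $Q_\alpha(F)$; the density $Q_\alpha(F)=\cl(Q_\alpha^>(F))$ then places points of $Q_\alpha^>(F)$ inside $G\subseteq X^{c}$, which via positivity of $\pi$ contradicts $\pi(Q_\alpha^>(F)\setminus X)=0$. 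Hence $Q_\alpha(F)\subseteq X$, so $X=Q_\alpha(F)$, establishing strict $\M$-consistency and thus exhaustive elicitability of $Q_\alpha$ on $\M$.

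The main obstacle is this final step of (iii): turning ``$X$ and $Q_\alpha(F)$ agree up to a $\pi$-null set'' into genuine equality. The subtlety is that deleting even a dense $\pi$-null subset can in principle alter closures, so the argument must exploit both regularity hypotheses together --- that $Q_\alpha(F)$ is the closure of its interior forces two distinct members of $\mathfrak U'$ to differ on a non-empty open, hence $\pi$-charged, set, while the density of $Q_\alpha^>(F)$ in $Q_\alpha(F)$ is what relocates that discrepancy into $Q_\alpha^>(F)$, where the null-set condition applies. The integrability conditions on $\pi$ enter only to keep all expected scores finite and to legitimise the reduction to $\widetilde S_{\alpha,\pi}$; parts (i) and (ii), by contrast, are essentially bookkeeping.
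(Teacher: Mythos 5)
Parts (i) and (ii) are correct and follow essentially the paper's approach: the paper derives (i) by specialising Proposition~\ref{prop:consistency} to $\mu=\delta_u$ and (ii) by integrating over $u$, which is exactly the pointwise computation you carry out. Your handling of (iii) also starts correctly: reducing to $\widetilde S_{\alpha,\pi}$ via $\E_F[(1-\alpha)\pi(\bY)]<\infty$, invoking Robbins' theorem, and extracting $\pi(X\setminus Q_\alpha(F))=0$ together with $\pi(Q_\alpha^>(F)\setminus X)=0$ from equality of expected scores. The inclusion $X\subseteq Q_\alpha(F)$ is also sound, since $\interior(X)\cap Q_\alpha(F)^{c}$ is open and $\pi$-null, hence empty.

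The gap is in the reverse inclusion. You produce the non-empty open set $G=\interior(Q_\alpha(F))\cap X^{c}$, use $Q_\alpha(F)=\cl(Q_\alpha^>(F))$ to get $G\cap Q_\alpha^>(F)\neq\emptyset$, and then appeal to positivity of $\pi$ to contradict $\pi(Q_\alpha^>(F)\setminus X)=0$. But positivity of $\pi$ only charges non-empty \emph{open} sets, and $G\cap Q_\alpha^>(F)$ need not be open: $p_\bY$ is merely upper semicontinuous, so $Q_\alpha^>(F)=\{p_\bY>\alpha\}$ is an $F_\sigma$ set which can have empty interior and even be $\pi$-null while still dense in $Q_\alpha(F)$. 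Concretely, with $E=\R$, $\pi$ Lebesgue, and $\bY$ equal to $[0,1]$ with probability $\alpha$ and otherwise a finite set of rationals tuned so that $p_\bY>\alpha$ exactly on $\mathbb{Q}\cap[0,1]$, one has $Q_\alpha(F)=[0,1]=\cl(Q_\alpha^>(F))=\cl(\interior(Q_\alpha(F)))$, yet $[0,1/2]$ and $[0,1]$ receive the same expected score. So the two regularity hypotheses do not by themselves close the gap; some additional condition such as $Q_\alpha^>(F)$ being open (automatic when $p_\bY$ is continuous) or $\pi(Q_\alpha^=(F))=0$ appears to be needed. For what it is worth, the paper's own proof --- which instead combines the argmin description \eqref{eq:argmin} with a lemma that distinct members of $\mathfrak U'$ have a symmetric difference with non-empty interior --- passes from $\pi(M\triangle Q_\alpha(F))>0$ to $M$ not being a minimiser, but \eqref{eq:argmin} only demands $\pi(M\triangle D)=0$ for \emph{some} measurable $D$ between $Q_\alpha^>(F)$ and $Q_\alpha(F)$, and the slack $Q_\alpha^=(F)$ may carry $\pi$-mass. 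So the obstacle you flagged at the end of your write-up is real and is not fully resolved by either argument as stated.
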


To prove this theorem, we will need an auxiliary result that we introduce now.

\begin{lem}
If for two sets $A,B\subseteq E$ it holds that $A=\cl(\interior(A))$ and $B=\cl(\interior(B))$, then $A\triangle B\neq\emptyset$ implies $\interior(A\triangle B)\neq\emptyset$.
\end{lem}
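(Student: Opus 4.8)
The plan is to reduce to the existence of a single point in the symmetric difference and then to build an open set around a well-chosen interior point. Assume $A\triangle B\neq\emptyset$. Since $A\triangle B=(A\setminus B)\cup(B\setminus A)$ and the hypotheses are symmetric in $A$ and $B$, I would argue without loss of generality that there is a point $x\in A\setminus B$; the case $x\in B\setminus A$ is handled by interchanging the roles of $A$ and $B$.

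Next I would use the two representation hypotheses in two different ways. From $B=\cl(\interior(B))$ I only extract that $B$ is closed, hence $E\setminus B$ is open; since $x\notin B$, there is an open neighbourhood $U$ of $x$ with $U\cap B=\emptyset$. From $x\in A=\cl(\interior(A))$ I extract that $x$ lies in the closure of $\interior(A)$, so every open neighbourhood of $x$ meets $\interior(A)$; in particular $U\cap\interior(A)\neq\emptyset$.

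To conclude, set $W:=U\cap\interior(A)$. Then $W$ is open and non-empty. Moreover $W\subseteq\interior(A)\subseteq A$, and $W\subseteq U$ gives $W\cap B=\emptyset$, so $W\subseteq A\setminus B\subseteq A\triangle B$. Since $W$ is open, $W\subseteq\interior(A\triangle B)$, which is therefore non-empty, as claimed.

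This is essentially routine point-set topology, so there is no substantial obstacle; the only points requiring a little care are the symmetry reduction (so that one does not need to treat $A\setminus B$ and $B\setminus A$ separately) and the asymmetric use of the two hypotheses—$B=\cl(\interior(B))$ is used merely for closedness of $B$, whereas $A=\cl(\interior(A))$ is used to ensure that the point $x\in A$ is approximated by points of $\interior(A)$.
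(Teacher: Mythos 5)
Your proof is correct and takes essentially the same approach as the paper: both reduce to a point $x\in A\setminus B$, use closedness of $B$ to find an open neighbourhood of $x$ disjoint from $B$, and use $x\in\cl(\interior(A))$ to find a point of $\interior(A)$ inside that neighbourhood, yielding a nonempty open subset of $A\setminus B$. The paper phrases the argument with a sequence $(a_n)\subseteq\interior(A)$ converging to $x$ and then produces the open set $\interior(A)\cap B^c$ at the end, but this is the same topological content as your neighbourhood formulation.
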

\begin{proof}
Let $A,B$ as above and assume that there is some $x\in A\triangle B$. Without loss of generality assume $x\in A\setminus B$. Since $x\in A$, there is a sequence $(a_n)_{n\in\N}\subseteq \interior(A)$ converging to $x$. Moreover, since $B$ is closed and $x\notin B$, there is some $m\in\N$ such that for all $n\geq m$ it holds that $a_n\notin B$. Thus, for all $n\geq m$ we have $a_n\in \interior(A)\setminus B=\interior(A)\cap B^c\subseteq A\cap B^c$. Since the interior of a set is the union of all of its open subsets,  $\emptyset\neq \interior(A)\cap B^c\subseteq \interior(A\cap B^c)$.
\end{proof}
\begin{proof}[Proof of Theorem~\ref{thm:mixture repre}]
The proof of (i) follows along the lines of the proof of Proposition~\ref{prop:consistency} upon setting $\mu = \delta_u$. Note that with this choice of $\mu$, any set is of finite measure. (ii) is a direct consequence of the nonnegativity and consistency of $S_{\a,u}(X,Y)$. \\
For (iii), let $F\in\M$ such that $\E_F[\pi(\bY)]<\infty$ and note that for any $M\in \mathfrak U'$ with $\pi(M)=\infty$, we have $\bar{S}_{\a,\pi}(M,F)=\infty$. Therefore it suffices to consider $M\in \mathfrak U'$ with $\pi(M)<\infty$ and one can invoke the equality in \eqref{eq:argmin}. If $Q_\alpha(F)$ is the topological closure of $Q_\alpha^>(F)$, then $X = Q_\alpha(F)$ is the only \emph{closed} set such that $Q^>_\alpha(F) \subseteq X \subseteq Q_\alpha(F)$. For any other closed set $M\in\mathfrak U'$ we therefore obtain that $X\triangle M \neq \emptyset$. 
This implies that $\interior(X\triangle M) \neq \emptyset$ and therefore, since $\pi$ is positive, $\pi(X\triangle M)>0$.
\end{proof}
The orientation of the selective identification function $V_\a$ directly implies order-sensi\-tivity in the sense of \cite{Nau1985} or \cite{FisslerZiegel2019} with respect to the partial order induced by the subset relation.
\begin{prop}
Let $\a\in[0,1]$. Then any exhaustive $\M$-consistent scoring function $S_{\a,\pi}$ for $Q_\a$ of the form \eqref{eq:mixture} is order-sensitive. That means for any $F\in\M$ and for any $A,B\in\mathfrak U$ such that $Q_\a(F) \subseteq A\subseteq B$ or $B\subseteq A\subseteq Q_\a(F)$ it holds that $\bar S_{\a,\pi}(A,F) \le \bar S_{\a,\pi}(B,F)$.
\end{prop}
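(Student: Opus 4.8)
The plan is to reduce order-sensitivity of the mixture score $S_{\alpha,\pi}$ to order-sensitivity of the elementary scores $S_{\alpha,u}$, proved separately for each $u$, and then integrate. First I would invoke the representation $S_{\alpha,\pi}(X,Y)=\int_E S_{\alpha,u}(X,Y)\,\pi(\diff u)$ from \eqref{eq:mixture} together with Tonelli's theorem — legitimate since the integrand is nonnegative — to obtain $\bar S_{\alpha,\pi}(X,F)=\int_E \bar S_{\alpha,u}(X,F)\,\pi(\diff u)$ with values in $[0,\infty]$ for all $X\in\mathfrak U$ and $F\in\M$. It then suffices to prove the pointwise inequality $\bar S_{\alpha,u}(A,F)\le\bar S_{\alpha,u}(B,F)$ for every $u\in E$ whenever $Q_\a(F)\subseteq A\subseteq B$ or $B\subseteq A\subseteq Q_\a(F)$, since integrating such an inequality against $\pi$ yields the assertion.

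Next I would compute the expected elementary score. As $X\in\mathfrak U$ is deterministic, \eqref{eq:elementary} gives $\bar S_{\alpha,u}(X,F)=\alpha(1-p)$ if $u\in X$ and $\bar S_{\alpha,u}(X,F)=(1-\alpha)p$ if $u\notin X$, where $p:=p_\bY(u)$. The two ingredients I would then use are: (a) the orientation of $V_\a$ from Proposition~\ref{prop:identifQ=}, which says $u\in Q_\a(F)\iff p\ge\alpha$ and $u\notin Q_\a(F)\iff p<\alpha$; and (b) the elementary equivalence $(1-\alpha)p\le\alpha(1-p)\iff p\le\alpha$ (equivalently $\alpha(1-p)\le(1-\alpha)p\iff p\ge\alpha$).

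With these in hand the verification is a short case distinction. For $Q_\a(F)\subseteq A\subseteq B$: if $u\in A$ then $u\in B$ and the two expected scores coincide; if $u\notin A$ then $u\notin Q_\a(F)$, so $p<\alpha$, and the inequality is an equality if $u\notin B$ and is exactly (b) if $u\in B$. The case $B\subseteq A\subseteq Q_\a(F)$ is symmetric, now using that $u\in A\subseteq Q_\a(F)$ forces $p\ge\alpha$. Assembling the cases gives $\bar S_{\alpha,u}(A,F)\le\bar S_{\alpha,u}(B,F)$ in both situations, and integrating against $\pi$ concludes.

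I do not anticipate a genuine obstacle; the only real care is measure-theoretic bookkeeping. Because $\pi$ is merely $\sigma$-finite and the expected scores may equal $+\infty$, I would keep everything valued in $[0,\infty]$ and use Tonelli (not Fubini) for the interchange of integration and expectation — which is precisely why I prefer the elementary-score route to the equally valid but subtraction-heavy identity $\bar S_{\alpha,\pi}(B,F)-\bar S_{\alpha,\pi}(A,F)=-\int_{B\setminus A}\bar V_\a(u,F)\,\pi(\diff u)$ for $A\subseteq B$ (whose sign also follows from the orientation of $V_\a$, since $B\setminus A\subseteq E\setminus Q_\a(F)$ in the first case and $B\setminus A\subseteq Q_\a(F)$ in the second, but which requires finiteness to manipulate).
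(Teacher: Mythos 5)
Your proof is correct. The paper does not give an explicit proof of this proposition; the sentence preceding it, that ``the orientation of the selective identification function $V_\alpha$ directly implies order-sensitivity,'' points at precisely the subtraction identity you mention at the end, where for nested reports the score difference reduces to $\pm\int_{B\triangle A}\bar V_\alpha(u,F)\,\pi(\diff u)$ and $B\triangle A$ lies entirely inside or entirely outside $Q_\alpha(F)$ by hypothesis. Your route through the nonnegative elementary scores and Tonelli is the same idea carried out more carefully: since the proposition imposes no finiteness on $\bar S_{\alpha,\pi}$ and $\pi$ is only $\sigma$-finite, the subtraction identity can degenerate to $\infty-\infty$, whereas the pointwise inequality $\bar S_{\alpha,u}(A,F)\le\bar S_{\alpha,u}(B,F)$ integrates against $\pi$ unconditionally. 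The evaluation $\bar S_{\alpha,u}(X,F)=\alpha(1-p_{\bY}(u))$ for $u\in X$ and $(1-\alpha)p_{\bY}(u)$ for $u\notin X$, and the subsequent case analysis using $u\in Q_\alpha(F)\iff p_{\bY}(u)\ge\alpha$, are both verified correctly.
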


It is worth to explore further connections between mixture representation of consistent scoring functions established for Vorob'ev quantiles in Theorem~\ref{thm:mixture repre} and the corresponding mixture representation in the one-dimensional case, which was introduced and discussed in \cite{EhmETAL2016}.
Indeed, the elementary scores introduced there, 
\[
S^Q_{\alpha,\theta}(x,y)=\left(\one\left\{y<x\right\}-\alpha\right)\left(\one\left\{\theta<x\right\}-\one\left\{(\theta<y\right\}\right), \quad x,y, \theta\in\R
\] 
can be rewritten as $\alpha\one_{[x,\infty)\setminus[y,\infty)}(\theta)+(1-\alpha)\one_{[y,\infty)\setminus[x,\infty)}(\theta)$.
Of course, we can identify the reals $x, y$ with the corresponding sets $X= [x,\infty)$ and $Y = [y,\infty)$, which shows that we end up with the form given in \eqref{eq:elementary}. For further analogy, let $Z$ be a real-valued random variable. This induces a random closed set $\mathbf{Z} = [Z,\infty)$. Then it holds that 
\[
Q_\a(\mathbf{Z}) = [q_\a^-(Z), \infty).
\]
As discussed in Example~\ref{examples} (iv), the elicitability of $q_\alpha^-(Z)$ is equivalent to the exhaustive elicitability of $[q_\alpha^-(Z),\infty)$. One can easily check that for a positive measure $H$ on $\R$, $$S(x,y)=\int S^Q_{\alpha,\theta}(x,y) H(\diff \theta)$$ is a strictly consistent exhaustive scoring function for $[q_\alpha^-(Z),\infty)$ if and only if the closure of $Q_\a^>(\mathbf{Z}) = (q_\a^+(Z), \infty)$, where $q_\a^+(Z)$ is the upper $\a$ quantile of $Z$, corresponds to $Q_\a(\mathbf{Z}) = [q_\a^-(Z), \infty)$. That is, if and only if the $\a$-quantile of $Z$ is unique. This retrieves the first condition in part (iii) of Theorem~\ref{thm:mixture repre}. Note that in the case of a one-dimensional quantile, the second condition is equivalent to $q_\a^-(Z) = q_\a^+(Z)$, too. However, in the case of Vorob'ev quantiles it is more involved and does not follow from the first condition in general. This structural difference also highlights the importance of a thorough framework for dealing with set-valued functionals.

\section{Connections to forecast evaluation in the literature}
\label{subsec:literature}

We would like to close the paper with a comprehensive literature review of different practices of treating forecasts for set-valued functionals. 
We think that these various perspectives illustrate the advantage our unified theoretical framework on set-valued forecast evaluation, with the thorough distinction between a selective and an exhaustive mode, offers. At the same time, these perspectives offer numerous starting points for further research projects to uncover their behaviour in terms of the classification into selectively elicitable functionals, exhaustively elicitable functionals, and functionals failing to be elicitable at all.

\subsection{Statistical forecast evaluation}
\label{subsec:forecast evaluation}

While \cite{LambertETAL2008} only consider real-valued functionals where the distinction between selective and exhaustive scoring functions is superfluous, the influential paper \cite{Gneiting2011} treats functionals as potentially set-valued; cf.\ \cite{LambertShoham2009, BelliniBignozzi2015}.
However, only the concept of selective scoring functions with the corresponding notion of (strict) consistency and elicitability are given. Presumably, the motivation for doing so was induced by the quantile-functional as one of the most prominent examples of a set-valued functional. To the best of our knowledge, forecasts for the quantile are exclusively considered in the selective sense \citep{Koenker2005, Komunjer2005, Gneiting2011b}, in which they are elicitable. The reason for not considering them in the exhaustive sense might lie in the impossibility of establishing corresponding elicitability results, of which the first formal proof---to the best of our knowledge---is given in this paper.

The recent preprint \cite{BrehmerGneiting2020} considers elicitability for the class of predictive intervals and certain specifications thereof through the lens of the selective notion.

\subsection{Statistical theory and risk measurement}

Quantiles and expectiles \citep{NeweyPowell1987} of univariate distributions are well known (selectively) elicitable functionals. In the literature on quantitative risk management, they are also common scalar risk measures. There are different competing attempts to generalise them to a multivariate setting. We refer the reader to two recent and insightful papers and the corresponding references therein: \cite{HamelKostner2018} introduce multivariate quantiles taking the form of convex sets, and \cite{DaouiaPaindaveine2019} introduce hyperplane-valued multivariate $M$-quantiles with a particular focus on hyperplane-valued multivariate expectiles. For both approaches, it remains an intriguing open question whether these functionals are selectively elicitable, exhaustively elicitable or not elicitable at all.

The newly established framework has been applied in \cite{FisslerHlavinovaRudloff_RM}, providing exhaustive elicitability results and selective identifiability results for set-valued systemic risk measures introduced in \cite{FeinsteinRudloffWeber2017}.

\subsection{Spatial statistics}

As mentioned at the beginning of Section~\ref{sec:Vorob'ev Quantiles}, estimating set-valued quantities is a common endeavour in spatial statistics. In that context, forecasts and estimates are commonly considered with what we call an exhaustive angle. Interesting open theoretical questions besides Vorob'ev quantiles are to consider other functionals, notably expectations, of random sets presented in the book \cite{Molchanov2017}.

One area of particular interest in spatial statistics is meteorology and climatology. In these disciplines, forecast evaluation is more commonly known under the term \emph{forecast verification}. We refer the reader to the comprehensive overview paper \cite{DorningerETAL2018}. Besides simply comparing a set-valued forecast and a set-valued observation as outlined above, there are also more involved situations covered. E.g.\ acknowledging the spatio-temporal structure of many processes such as precipitation, one might evaluate probabilistic forecasts for the marginal distributions of the random field of interest at certain grid points, using the \emph{neighbourhood} method (see \cite{DorningerETAL2018} for references). Assessing the entire joint distribution of the random field seems extremely ambitious and we are unaware of any verification method at the moment.

\subsection{Regression and Machine Learning}

Recent literature on isotonic regression embraces the idea of explicitly modelling functionals as set-valued; see \cite{JordanMuehlemannZiegel2019} and \cite{MoeschingDuembgen2019}, where the two papers consider these functionals in the selective sense.

\cite{KivaranovicETAL2019} examine how to obtain prediction intervals with deep neural networks.
In the area of machine learning, the recent paper \cite{GaoETAL2019} considers set-valued regression as well, however, considering finite sets only. The observations (or response variables) $Y_t$ are finite subsets of some label space $S$, which is assumed to be at most countably infinite. Denoting the regressors with $X_t\in\R^p$ then they are interested in finding a function $m\colon \R^p\to \{I\,|\,I\subseteq S, \ |I|<\infty\}$ such that $m(X_t)$ is reasonably close to $Y_t$. However, they do not explicitly specify the loss function they use for the regression problem.
In an orthogonal direction, \cite{ZaheerETAL2017} consider the case of set-valued regressors rather than set-valued responses, which does not lead to the question of an appropriate choice of loss function with set-valued arguments.

\subsection{Philosophy}

Within a more philosophical strand of literature about \emph{credences}, i.e., subjective probabilities of degrees of belief, \cite{Mayo-WilsonWheeler2016} argue that imprecise credences about the probability of a binary event can be represented as subsets of the unit interval $[0,1]$; cf.\ \cite{SeidenfeldETAL2012}. They consider numerical accuracy measures, being functions of the set-valued credence and the binary outcome. In this regard, they consider scoring functions taking sets as arguments. However, this ansatz is distinct from our focus since we consider forecasts for functionals which are inherently set-valued and dispense with a discussion of subjective probabilities, whereas they consider set-valued forecasts for a functional which is actually real-valued, namely the probability of a binary event.

\appendix
\section*{Appendix}

\section{Prediction intervals with a fixed endpoint or midpoint}
\label{subsec:fixed points}
In this subsection, we consider $\alpha$-prediction intervals where one endpoint or the middle of the interval is fixed \textit{a priori} at some point $a\in\R$ or $c\in\R$, respectively. We only address the case of the left endpoint being fixed---the case of the right endpoint follows \textit{mutatis mutandis}. Clearly, if the position of the interval is specified in this sense, reporting such an interval boils down to reporting a one-dimensional quantity, e.g., the other endpoint or the length of the interval.
For a fixed endpoint, however, the existence of such a prediction interval is in general no longer guaranteed (there might not be enough mass on the right of the lower endpoint), and one needs to specify the class of distribution functions more carefully. Therefore, for some fixed $\alpha\in(0,1)$, we introduce $\M_{a,0}=\{F\in\M_0\,|\,F(a-)\leq1-\alpha\}$ for some fixed $a\in\R$. 
 Moreover, let $\M_{a,\a, \text{inc}}$ be the subset of distributions in $\M_{a,0}$ such that $q_{\a+F(a-)}(F)$ is a singleton (with $\Gamma_\a(F)(a)$ being its unique element).
We start with an endpoint specification. 
\begin{prop}\label{prop:fix_left}
\begin{enumerate}[\rm (i)]
\item
On $\M_{a,0} \cap \M_{\text{\rm inc,\,cont}}$ the functional $F\mapsto \Gamma_\a(F)(a)\in [a,\infty]$ is identifiable with a strict identification function
\[
V\colon[a,\infty]\times\R\to\R,\qquad V(x,y)=\one\{y\in [a,x]\}-\alpha.
\]
\item 
On $\M_{a,\a, \text{\rm inc}}$ the functional $F\mapsto \Gamma_\a(F)(a)\in [a,\infty]$ is elicitable. An $\M_{a,0}$-consistent scoring function is given by $S\colon [a,\infty]\times\R\to\R$
\begin{align}\label{eq:I_a_exh}
S(x,y)&=\int\limits_{[x,\infty)}V(z,y)\diff\mu(z)= \a \mu([x,\infty)) -  \mu\big([x,\infty)\cap[y,\infty)\big)\one\{y\ge a\}
\end{align}
where $\mu$ is a non-negative finite measure on $\R$.
If moreover $\mu$ is positive, $S$ is also strictly $\M_{a,\a, \text{\rm inc}}$-consistent.
\end{enumerate}
\end{prop}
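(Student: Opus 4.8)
The plan is to treat Proposition~\ref{prop:fix_left} as the one-dimensional shadow of Theorem~\ref{thm:intervals_class}: both parts reduce to the monotonicity and one-sided continuity of the map $z\mapsto F([a,z]) = F(z)-F(a-)$, combined with the identity $\Gamma_\a(F)(a) = q^-_{\a+F(a-)}(F)$ from \eqref{eq:u} and the facts collected in Lemma~\ref{lem:Gamma}.

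For part~(i), the first step is to compute, for $F\in\M_{a,0}\cap\M_{\text{\rm inc,\,cont}}$ and any $x\in[a,\infty]$, that $\bar V(x,F) = F([a,x])-\a$, using the convention $F([a,\infty])=1-F(a-)$. Since $F$ is continuous and strictly increasing, the map $x\mapsto F([a,x])$ is continuous and strictly increasing on $[a,\infty)$, running from $0$ at $x=a$ up to $1-F(a-)\ge\a$ and attaining the value $1-F(a-)$ at $x=\infty$; hence $F([a,x])=\a$ admits a \emph{unique} solution $x^\star\in[a,\infty]$. Reconciling this with the infimum defining $\Gamma_\a(F)(a)$ — continuity makes that infimum attained and equal to $x^\star$ when $1-F(a-)>\a$, and forces $x^\star=\infty=\Gamma_\a(F)(a)$ in the boundary case $F(a-)=1-\a$ — yields simultaneously $\bar V(\Gamma_\a(F)(a),F)=0$ and the implication $\bar V(x,F)=0\Rightarrow x=\Gamma_\a(F)(a)$, i.e.\ strict identifiability.

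For part~(ii), a Fubini argument (legitimate because $\mu$ is finite and $|V|\le1$, so all integrands are bounded) turns \eqref{eq:I_a_exh} into
\[
\bar S(x,F) \;=\; \int_{[x,\infty)}\bigl(\a-F([a,z])\bigr)\,\mu(\diff z)\;=\;-\int_{[x,\infty)}\bar V(z,F)\,\mu(\diff z),
\]
with the convention $[x,\infty)\cap\R=\emptyset$ when $x=\infty$. Writing $g_F(z):=\a-F([a,z])$ and $x^\star:=\Gamma_\a(F)(a)$, the key structural facts are that $g_F$ is decreasing and right-continuous, with $g_F\le0$ on $[x^\star,\infty)$ by Lemma~\ref{lem:Gamma}(i) and $g_F>0$ on $[a,x^\star)$ by the definition of $x^\star$ as an infimum. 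Comparing an arbitrary report $x$ with $x^\star$: if $x<x^\star$ then $\bar S(x,F)-\bar S(x^\star,F) = \int_{[x,x^\star)}g_F\,\mu(\diff z)\ge0$, while if $x\ge x^\star$ (including $x=\infty$, where $\bar S(\infty,F)=0$) then $\bar S(x,F)-\bar S(x^\star,F) = -\int_{[x^\star,x)}g_F\,\mu(\diff z)\ge0$; this yields $\M_{a,0}$-consistency. For strict $\M_{a,\a,\text{\rm inc}}$-consistency with a positive $\mu$, I would sharpen both estimates: when $x<x^\star$, the nonempty open interval $(x,x^\star)$ has positive $\mu$-mass and $g_F$ is bounded below on it by $g_F\bigl(\tfrac{x+x^\star}{2}\bigr)>0$; when $x>x^\star$ one needs the \emph{strict} inequality $F([a,z])>\a$ (i.e.\ $g_F(z)<0$) for all $z>x^\star$, which is exactly what membership in $\M_{a,\a,\text{\rm inc}}$ delivers — singleton-ness of $q_{\a+F(a-)}(F)=\{x^\star\}$ rules out a flat stretch of $F$ at level $\a+F(a-)$ to the right of $x^\star$ — and in the subcase $x^\star=\infty$ the condition reads $q_1(F)=\{\infty\}$, forcing $F(z)<1$, hence $g_F(z)=1-F(z)>0$, for every finite $z$. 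Positivity of $\mu$ on a compact subinterval on which $g_F$ is bounded away from zero then makes the corresponding integral strictly positive.

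I expect the one genuinely non-mechanical step to be the last one: translating the cardinality hypothesis defining $\M_{a,\a,\text{\rm inc}}$ into the strict score-difference inequality for reports $x>\Gamma_\a(F)(a)$. This is the precise one-dimensional analogue of the phenomenon flagged in the paragraph after Theorem~\ref{thm:intervals_class} — without singleton-ness one would obtain an interval of positive $\mu$-measure on which $\bar V$ vanishes, and hence an interval of equally optimal reports — and it requires handling the degenerate value $\Gamma_\a(F)(a)=\infty$ by a separate argument. Everything else is bounded-integrand bookkeeping, the only nuisance being the simultaneous treatment of $\infty$ as an admissible value of the functional and as an admissible report.
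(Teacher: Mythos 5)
Your proof is correct and follows the same route the paper indicates — a one-dimensional specialization of the argument for Theorem~\ref{thm:intervals_class}: strict monotonicity and continuity of $z\mapsto F([a,z])$ give uniqueness of the root in part (i), and the Fubini-plus-orientation analysis of $\bar V(z,F)=F([a,z])-\a$ around $x^\star=\Gamma_\a(F)(a)$ gives part (ii), with strictness correctly extracted from the singleton condition defining $\M_{a,\a,\text{\rm inc}}$ (and the degenerate value $x^\star=\infty$ handled separately). One point worth flagging: your identity $\bar S(x,F)=-\int_{[x,\infty)}\bar V(z,F)\,\mu(\diff z)$ agrees with the explicit right-hand side of \eqref{eq:I_a_exh} and with the exhaustive score $\Se=-\int\Vs\,\diff\mu$ of Theorem~\ref{thm:intervals_class}, whereas the middle expression $\int_{[x,\infty)}V(z,y)\diff\mu(z)$ as printed in \eqref{eq:I_a_exh} is its negative — a sign typo in the paper which your derivation silently corrects.
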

\begin{proof}
The proof follows along the lines of the proof of Theorem~\ref{thm:intervals_class}.
\end{proof}
\begin{rem}\label{rem:equivalent score}
A non-negative equivalent version of the score in \eqref{eq:I_a_exh} is given via 
\begin{align}\nonumber
\widetilde S(x,y) &= \one\{y\ge a\}\Big((1-\a)\mu\big([y,\infty)\setminus [x,\infty)\big) + \a\mu\big([x,\infty)\setminus [y,\infty)\big)\Big) + \one\{y<a\}\a \mu\big([x,\infty)\big) \\
&= \one\{y\ge a\}\Big(\big(\one\{y\le x\} - \a\big) \big(h(y) - h(x)\big)\Big) + \one\{y<a\}\a h(x)\,,\label{eq:rem}
\end{align}
where $h\colon(-\infty,\infty]\to\R$ is a decreasing function given by $h(t) = \mu([t,\infty))$ such that $h(+\infty)=0$. Besides the obvious interpretation of the first line of \eqref{eq:rem} in the context of mixture representation, it is remarkable to see the structural similarity to a standard quantile score in the second line of \eqref{eq:rem}, which is of the form $\big(\one\{y\le x\} - \a\big) \big(h(y) - h(x)\big)$. In fact, if the whole support of $F$ lies above $a$, the right endpoint of the resulting interval is the $\a$-quantile. If $F$ assigns positive mass to $(-\infty,a)$, there is a correction term accounting for the fact that the $\a$-quantile would not be sufficient to achieve the required coverage anymore. Note that without the correction term, one is in the setting of Theorem 5 in \cite{Gneiting2011} with $w(y)=\one\{y\ge a\}$. This would correspond to forecasting the $\a$-quantile of $F$ truncated at $a$, i.e., loosely speaking the point under which $\a\cdot100\%$ of the mass above $a$ is located. The aim, however, is to report a point such that $\a\cdot100\%$ of the \emph{whole mass} is between $a$ and the reported point.
\end{rem}

\begin{prop}
For some $m\in\R$ consider the functional $b_m\colon F\mapsto d_\a(F)(m)\in[0,\infty)$, specifying half of the length of the shortest $\a$-prediction interval with midpoint $m$.
Then the following assertions hold:
\begin{enumerate}[\rm (i)]
\item $b_m$ is identifiable on $\M_{\text{\rm inc,\,cont}}$ with a strict identification function  
\[
V\colon[0,\infty)\times\R\to\R,\qquad (x,y)\mapsto V(x,y)=\one\{y\in [m-x,m+x]\}-\alpha.
\]
\item $b_m$ is elicitable on $\M_{\text{\rm inc,\,cont}}$. 
If $\mu$ is a finite non-negative measure on $[0,\infty)$, then $S\colon [0,\infty) \times\R\to\R$
\be{eq:I_c_exh}
S(x,y)=\int\limits_{[0,x)}V(z,y)\diff\mu(z) = \mu\big([0,x) \cap [|y-m|, \infty)\big) - \a\mu\big([0,x)\big)
\ee
is an $\M_{\text{\rm inc,\,cont}}$-consistent scoring function for $b_m$. It is strictly $\M_{\text{\rm inc,\,cont}}$-consistent if $\mu$ is positive. 
\end{enumerate}
\end{prop}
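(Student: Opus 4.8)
The plan is to follow the template of the proofs of Theorem~\ref{thm:intervals_class} and Proposition~\ref{prop:fix_left}: $b_m$ is once more a one-dimensional object carved out of the class of $\a$-prediction intervals, here by pinning the midpoint at $m$, so that its expected identification function is monotone in the single argument and the expected score is the running $\mu$-integral of that identification function.

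For part (i), I would start by computing, for $x\in[0,\infty)$ and $F\in\M_{\text{\rm inc,\,cont}}$,
\[
\bar V(x,F)=\E_F\big[\one\{Y\in[m-x,m+x]\}\big]-\a=F([m-x,m+x])-\a .
\]
The crucial structural fact is that $[0,\infty)\ni x\mapsto F([m-x,m+x])$ is continuous (as $F$ is continuous) and strictly increasing (as $F\in\M_{\text{inc}}$ has full support), taking the value $F(\{m\})=0$ at $x=0$ and tending to $F(\R)=1$ as $x\to\infty$. Hence $x\mapsto\bar V(x,F)$ is continuous and strictly increasing from $-\a<0$ to $1-\a>0$, so it possesses exactly one zero. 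It remains to identify this zero with $b_m(F)=d_\a(F)(m)$: by the continuity argument recalled after \eqref{eq:d}, the infimum defining $d_\a(F)(m)$ is attained and finite (using $\a<1$), so $F\big([m-d_\a(F)(m),m+d_\a(F)(m)]\big)\ge\a$; a strict inequality here would, by continuity, yield a strictly smaller radius still reaching coverage $\a$, contradicting the infimum. Thus $\bar V(d_\a(F)(m),F)=0$, and by uniqueness $d_\a(F)(m)$ is that zero. This proves that $V$ is a strict $\M_{\text{\rm inc,\,cont}}$-identification function for $b_m$.

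For part (ii), I would first verify the closed form in \eqref{eq:I_c_exh}: since $\one\{y\in[m-z,m+z]\}=\one\{z\ge|y-m|\}$, the defining integral equals $\mu\big([0,x)\cap[|y-m|,\infty)\big)-\a\mu([0,x))$; finiteness of $\mu$ and $|V|\le1$ give $|S(x,y)|\le\mu([0,\infty))<\infty$, so $S$ is $\M$-finite. Interchanging $\E_F$ with the $\mu$-integral (Fubini, again by boundedness) yields $\bar S(x,F)=\int_{[0,x)}\bar V(z,F)\dint\mu(z)$. By part (i) and the strict monotonicity there, $\bar V(z,F)<0$ for $z\in[0,b_m(F))$, $\bar V(b_m(F),F)=0$, and $\bar V(z,F)>0$ for $z>b_m(F)$; hence for any $x\in[0,\infty)$,
\[
\bar S(x,F)-\bar S\big(b_m(F),F\big)=
\begin{cases}
\int_{[b_m(F),x)}\bar V(z,F)\dint\mu(z)\ge0, & x\ge b_m(F),\\[0.6em]
-\int_{[x,b_m(F))}\bar V(z,F)\dint\mu(z)\ge0, & x<b_m(F),
\end{cases}
\]
which is $\M_{\text{\rm inc,\,cont}}$-consistency of $S$ for $b_m$. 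If $\mu$ is positive, then for $x\neq b_m(F)$ the relevant integral runs over a set containing a non-empty open interval on which $\bar V(\cdot,F)$ has a strict sign, so it is strictly positive, giving strict $\M_{\text{\rm inc,\,cont}}$-consistency and thus elicitability of $b_m$ on $\M_{\text{\rm inc,\,cont}}$.

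The argument is largely routine; the step deserving real care is the identification of the unique zero of $\bar V(\cdot,F)$ with $b_m(F)$ --- equivalently, that for continuous $F$ and $\a<1$ the shortest $\a$-prediction interval centred at $m$ has exact coverage $\a$ and finite length --- which is exactly where the continuity and strict monotonicity available on $\M_{\text{\rm inc,\,cont}}$ are used. A secondary, purely bookkeeping point is that $\mu(\{b_m(F)\})$ may be positive, so one should note that the half-open set $[0,x)$ in \eqref{eq:I_c_exh} is harmless: since $\bar V(b_m(F),F)=0$, the single point $b_m(F)$ contributes zero to every integral above, so including or excluding it changes neither the value at the minimiser nor the set of minimisers.
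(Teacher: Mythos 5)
Your proof is correct and follows exactly the template the paper itself invokes (the proof of Theorem~\ref{thm:intervals_class} specialised to the one-dimensional setting): monotonicity of $\bar V(\cdot,F)$ gives the unique zero at $b_m(F)$, Fubini reduces $\bar S(\cdot,F)$ to a running integral of $\bar V(\cdot,F)$, and the sign change at $b_m(F)$ together with positivity of $\mu$ yields (strict) consistency. Your closing remark on the half-open interval $[0,x)$ and possible atoms of $\mu$ at $b_m(F)$ is a welcome precision that the paper leaves implicit.
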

\begin{proof}
The proof follows along the lines of the proof of Theorem~\ref{thm:intervals_class}.
\end{proof}
\begin{rem}
A non-negative equivalent version of the score in \eqref{eq:I_c_exh} is given via 
\begin{align}\nonumber
\widetilde S(x,y) &= \mu\big([0,x) \cap [|y-m|, \infty)\big) - \a\mu\big([0,x)\big) +\a\mu\big([0,|y-m|)\big)  \\
&= \big(\one\{|y-m|\le x\} - \a\big) \big(g(x) - g(|y-m|)\big)\,,\nonumber
\end{align}
where $g\colon[0,\infty)\to\R$ is an increasing function given by $g(t) = \mu([0,t))$ such that $h(0)=0$. Again we see the structural similarity to a standard quantile score in the second line. In particular, we see that $b_m$ corresponds to the $\a$-quantile of the distribution of $|Y-m|$.
\end{rem}

\section{Injectivity Results for Prediction Interval Variants}\label{sec:inject-results-pred}

\newcommand{\B}{\mathcal{B}}

For simplicity, let us consider the class $\M^c \subseteq \M_0$ of probability measures with single-valued quantiles in the range $(0,1)$, i.e., supported on an interval (potentially all of $\R$) and whose CDFs are strictly increasing on that interval.
We first observe that if a functional value $T(F)$ uniquely determines a dense set of quantiles for $F$, then $T$ must be injective.

\begin{lem}\label{lem:functional-injective}
  For some set $W$, let $T:\M^c\to 2^{W}$ and let $Q\subseteq(0,1)$ be dense.
  If for all $F\in\M^c$, the value of $T(F)$ uniquely determines the values of $q_\beta(F)$ for all $\beta\in Q$, then $T$ is injective.
\end{lem}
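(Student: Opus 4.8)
The plan is to show that for $F\in\M^c$ the value $T(F)$ determines the entire quantile function of $F$ on $(0,1)$, and that this quantile function in turn determines $F$; injectivity of $T$ is then immediate. Throughout, write $\gamma_F\colon(0,1)\to\R$, $\gamma_F(\beta):=q_\beta^-(F)$, for the lower quantile function; on $\M^c$ each $q_\beta(F)$ is the singleton $\{\gamma_F(\beta)\}$, $\beta\in(0,1)$, because $F$ is strictly increasing on its support.

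First I would record that $\gamma_F$ is \emph{continuous} on $(0,1)$ for every $F\in\M^c$. This is the point where the restriction to $\M^c$ enters: since $F$ is strictly increasing on its support interval, $F$ has no flat pieces at levels in $(0,1)$, and the lower quantile function of a distribution with no such flat pieces is continuous (possible jumps of $F$ create only harmless flat pieces of $\gamma_F$). Now suppose $F,G\in\M^c$ with $T(F)=T(G)$. By hypothesis $q_\beta(F)=q_\beta(G)$, i.e.\ $\gamma_F(\beta)=\gamma_G(\beta)$, for all $\beta\in Q$. Two continuous functions on $(0,1)$ that agree on the dense subset $Q$ must coincide, so $\gamma_F=\gamma_G$ on all of $(0,1)$.

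It remains to recover $F$ from $\gamma_F$. I would invoke the standard identity $\gamma_F(\beta)\le t\iff \beta\le F(t)$, valid for $\beta\in(0,1)$ and $t\in\R$ (using monotonicity and right-continuity of $F$), which yields $F(t)=\sup\{\beta\in(0,1)\,:\,\gamma_F(\beta)\le t\}$ with the convention $\sup\emptyset=0$. Hence $\gamma_F=\gamma_G$ forces $F=G$, and $T$ is injective. I do not anticipate a genuine obstacle here; the only steps requiring a little care are the continuity of $\gamma_F$ on $(0,1)$ for $F\in\M^c$ (exactly where membership in $\M^c$ is used) and the behaviour of the displayed $\sup$ formula at the endpoints of the support when it is a proper subinterval of $\R$, both of which are routine and the latter handled cleanly by the stated convention.
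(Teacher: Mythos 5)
Your proof is correct and follows the same route as the paper's: the quantile function of $F\in\M^c$ is continuous on $(0,1)$, so its values on the dense set $Q$ determine it everywhere, and the quantile function in turn determines $F$. If anything you are slightly more careful than the paper's own argument, which asserts that $F$ is continuous (hence $F^{-1}$ strictly monotone) even though membership in $\M^c$ only forces $F$ to be strictly increasing on its support and does not by itself rule out jumps; your version correctly isolates continuity of $\gamma_F$ (rather than of $F$) as the property that matters and supplies the explicit reconstruction $F(t)=\sup\{\beta\in(0,1):\gamma_F(\beta)\le t\}$.
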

\begin{proof}
  By definition of $\M^c$, we have $q_\beta(F) = F^{-1}(\beta)$ for all $F\in\M^c$ and $\beta\in(0,1)$.
  As $F$ is continuous and strictly monotone, its inverse is also continuous on $(0,1)$ and strictly monotone.
  Thus, specifying the values of $F^{-1}$ on a dense subset of $(0,1)$ uniquely specifies $F^{-1}$ and thus $F$.  
\end{proof}

For $\alpha\in(0,1]$ we will now define the collection $\C_\a(F)$ of all $\alpha$-prediction sets of $F$, as well as unions of two prediction intervals $\I^2_\a(F)$ and wrapped intervals $\I^w_\a(F)$.
Recall the definition $\bar U := \{(a,b)^\intercal \in \bar \R^2\,|\,a\le b\}$, where $\bar\R\colon =\R\cup\left\{-\infty,\infty\right\}$.
In what follows, we will overload notation and interpret $I\in\bar U$ as a closed interval, so for example if $I=(a,b)^\intercal$ we have $F(I) = F([a,b])$.
\begin{align*}
  &\C_\a\colon \M^c\to 2^{\B(\R)}, \;\; F\mapsto \{B\in \B(\R) : F(B) \geq \a\}\,,\\
  &\I^2_\a\colon \M^c\to 2^{\B(\R)}, \;\; F\mapsto \{(I_1,I_2)\in \bar U^2 : 
    F(I_1\cup I_2)\geq\a\}\,,\\
  &\I^w_\a\colon \M^c\to 2^{\B(\R)}, \;\; F\mapsto \I_\a(F) \cup \{(I_1,I_2)\in\I^2_\a(F) : I_1= (a, \infty)^\intercal,\, I_2=(-\infty, b)^\intercal, a \geq b\}\,.
\end{align*}
We first show injectivity of $\I^2_\a$, and thus $\C_\a$.
We will routinely rely on the bijection between the above  $\I^2_a$ and the functional $\I^2_{=\a} \colon \M^c\to 2^{\B(\R)}$ defined by $\I^2_{=\a}(F) = \{(I_1,I_2)\in\I^2_\a : F(I_1\cup I_2) = \a\}$.   
It is clear that $\I^2_\a(F)$ can be constructed from $\I^2_{=\a}(F)$ and vice versa, by adding or removing nested intervals.
  In particular, $\I^2_\a$ is injective if and only if $\I^2_{=\a}$ is.

\begin{prop}
  \label{prop:double-interval-injective}
  $\I^2_\a$ is injective for all $\a \in (0,1)$.
\end{prop}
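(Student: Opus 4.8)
The plan is to prove that the (equivalent) functional $\I^2_{=\a}$ determines, for each $F\in\M^c$, the values $q_\beta(F)$ for all $\beta$ in a dense subset of $(0,1)$, and then invoke Lemma~\ref{lem:functional-injective}. Injectivity of $\C_\a$ will then follow immediately, since $\C_\a(F)$ determines $\I^2_\a(F)$ (a union of two intervals is a Borel set, so $(I_1,I_2)\in\I^2_\a(F)$ iff $I_1\cup I_2\in\C_\a(F)$). First I would extract some elementary data from $\I^2_{=\a}(F)$. Pairing an interval $I$ with a singleton gives $\graph\Gamma_\a(F)=\{I\in\bar U : (I,(z,z)^\intercal)\in\I^2_{=\a}(F)\}$ (for $F$ continuous, $F(I\cup\{z\})=F(I)$), which recovers $\Gamma_\a(F)$ and hence $q_\a(F)=\lim_{a\to-\infty}\Gamma_\a(F)(a)$; pairing a left with a right half-line, $((-\infty,s]^\intercal,[t,\infty)^\intercal)\in\I^2_{=\a}(F)$ iff $F(t)-F(s)=1-\a$, which recovers $\Gamma_{1-\a}(F)$ and $q_{1-\a}(F)$.

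The heart of the argument is to recover the \emph{equi-mass partition} of the bounded intervals $U$, i.e.\ the relation $I\sim I'\Leftrightarrow F(I)=F(I')$. For intervals $I,I'$ and any interval $J$ disjoint from $I\cup I'$ one has $(I,J)\in\I^2_{=\a}(F)\Leftrightarrow F(J)=\a-F(I)$, and likewise for $I'$; hence $I\sim I'$ can be read off as the statement that $I$ and $I'$ are interchangeable as first coordinate when paired with any interval disjoint from both — provided the complement of $I\cup I'$ contains intervals whose masses separate the levels $\a-F(I)$ and $\a-F(I')$. Checking that enough such ``witness room'' is available in all cases, and handling the configurations where it is not (this is delicate when $\a$ is close to $1$, where intervals of large mass must be compared, and one must bootstrap by first treating intervals near a tail, where room is ample, and then enlarging the region of intervals of known mass inductively), is the step I expect to be the main obstacle.

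Finally, once the equi-mass partition is known together with the value-$\a$ class $\graph\Gamma_\a(F)$, the map $W(a,b):=F(b)-F(a)$ on $U$ is pinned down, and with it $F$ (via $F(b)=\lim_{a\to-\infty}W(a,b)$). Indeed $W$ is continuous, strictly monotone in each argument, and additive, $W(a,c)=W(a,b)+W(b,c)$; its level sets are therefore totally ordered, $W^{-1}(0)$ is the diagonal, and for $(a,b)\in W^{-1}(\a)$ the intermediate value theorem applied to $c\mapsto 2F(c)-F(a)-F(b)$ yields a unique $c$ with $W(a,c)=W(c,b)$, whence $W^{-1}(\a/2)$ is the already-recovered class of $[a,c]$. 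Iterating gives $W^{-1}(\a/2^k)$ for every $k$, and concatenating chains of adjacent intervals of class $\a/2^k$ gives $W^{-1}(j\a/2^k)$ for all $j,k$. Since $\{j\a/2^k : j,k\in\N\}\cap(0,1)$ is dense in $(0,1)$, this recovers $\graph\Gamma_\beta(F)$, hence $q_\beta(F)=\lim_{a\to-\infty}\Gamma_\beta(F)(a)$, for a dense set of $\beta$, and Lemma~\ref{lem:functional-injective} concludes the proof.
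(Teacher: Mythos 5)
Your high-level target is the right one and matches the paper's: reduce to recovering $q_\beta(F)$ for a dense set of $\beta$ and invoke Lemma~\ref{lem:functional-injective}. But the route you take has a genuine unresolved gap, and it is exactly the step you flag yourself. You try to recover the \emph{entire} equi-mass relation $I\sim I'$ on all of $U$ via the interchangeability-with-a-disjoint-witness test, and then acknowledge that verifying ``enough witness room is available in all cases'' -- especially for intervals of large mass, or for $\alpha$ close to $1$ -- is ``the step I expect to be the main obstacle,'' offering only a vague bootstrapping sketch. As written, the proof does not go through: without a concrete argument that every comparison can be witnessed, step~2 is incomplete, and step~3 depends entirely on step~2.

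The paper's proof sidesteps this by never recovering the full equi-mass partition. It works directly with $\I^2_{=\a}$ and recovers only the quantiles $q_{k,n}=F^{-1}(k\alpha/2^n)$ by a targeted induction: at each stage the witness interval is \emph{always} placed in the right tail, $I_3=(q_{1,0},y)^\intercal$, and the intervals being compared, $I_1=(-\infty,x)^\intercal$ and $I_2=(x,q_{1,n})^\intercal$, partition a set of mass $\le\alpha$. When $\alpha\le 1/2$ the tail $(q_{1,0},\infty)$ has mass $1-\alpha\ge\alpha$, so the witness always fits -- the room problem never arises. For $\alpha>1/2$ one runs the same induction for $\beta=1-\alpha$, using right half-lines (the base case is $I=(x,\infty)^\intercal$ with $F(I)=\alpha$, i.e.\ $F(x)=\beta$). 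So the case split $\alpha\lessgtr 1/2$, together with the fixed tail-witness, is precisely the resolution of the ``witness room'' obstacle that your proposal leaves open. Your IVT-based bisection idea in step~3 is essentially the same halving as the paper's induction, but it is framed as already having $W$ in hand, which begs the question; if you strip out the dependence on the full equi-mass relation and instead test equality of $F(I_1)$ and $F(I_2)$ through a fixed tail witness, step~3 collapses into the paper's argument.
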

\begin{proof}
  We will instead show injectivity of the functional $\I^2_{=\a}$.
  Let $F\in\M^c$, and consider first the case $\alpha\leq 1/2$.
  Given $\I^2_{=\a}(F)$, we will show how to compute the quantiles $q_{k,n} := F^{-1}(k\alpha/2^n)$ for all $k\in\N$, $n\in\N_0$ such that $k\alpha/2^n \in (0,1)$, at which point the result will follow from Lemma~\ref{lem:functional-injective}.
  We first show the result for $k \leq 2^n$; the other values will follow from the observation that $F((q_{k,n},q_{k+2^n,n}])=\alpha$.
  
  As a base case, consider $n=0$ and $k=1$.
  The value $q_{1,0}$ defines the unique interval of the form $(-\infty,q_{1,0}]$ such that $F((-\infty,q_{1,0}])=\alpha$.
  Thus, we may take any $(I_1,I_2)\in\I^2_{=\a}(F)$ with $I_1 = (-\infty,a)^\intercal$ and $I_2 = (a,b)^\intercal$ and set $q_{1,0} = b$.

  Now assume the value of $q_{k,n} = F^{-1}(k\alpha/2^n)$ is known for all $1\leq k\leq 2^n$; we will show how to compute $q_{k,n+1}$ for all $1\leq k \leq 2^{n+1}$.
  We will show that there is a unique triple of intervals of the form $I_1 = (-\infty,x)^\intercal, I_2 = (x,q_{1,n})^\intercal, I_3 =(q_{1,0},y)^\intercal$ such that $(I_1,I_3),(I_2,I_3)\in\I^2_{=\a}(F)$.
  For existence, take $x = q_{1,n+1}$ and $y = q_{2^{n+1}+1,n+1}$.
  For uniqueness, we have $F(I_1)+F(I_2)=\alpha/2^n$ by definition of $q_{1,n}$, and $F(I_1)+F(I_3)=F(I_2)+F(I_3)=\alpha$ by definition of $\I^2_{=\a}$, from which we conclude $F(I_1)=F(I_2)=\alpha/2^{n+1}$.
  Thus, we must have $x=q_{1,n+1}$, and the value of $y$ follows.
  This triple therefore uniquely determines $q_{1,n+1}$.
  To recover the other values of $k$, observe that for all $k\leq 2^n$ we trivially have $q_{2k,n+1} = q_{k,n}$, and the unique $I'_k = (q_{k,n},z)^\intercal$ with $(I'_k,I_3)\in\I^2_{=\a}(F)$ satisfies $z=q_{2k+1,n+1}$.
  
  When $\alpha > 1/2$, we may proceed with the previous construction replacing $\alpha$ with $\beta = 1-\alpha$, as follows.
  We now let $q_{k,n} := F^{-1}(k\beta/2^n)$ for all $k,n\in\N$ such that $k\beta/2^n \in (0,1)$.
  Again, we first show how to compute these values for $1\leq k \leq 2^n$, as the other values follow from the observation $F((-\infty,q_{k,n}]\cup[q_{k+2^n,n},\infty))=\alpha$.
  
  The base of the induction defines $q_{1,0}=x$ from the unique interval $I=(x,\infty)^\intercal$ such that $F(I)=\alpha$.
  To induct, we again ask for intervals $I_1 = (-\infty,x)^\intercal, I_2 = (x,q_{1,n})^\intercal, I_3 =(q_{1,0},y)^\intercal$ such that $(I_1,I_3),(I_2,I_3)\in\I^2_{=\a}(F)$, with the same argument as before replacing $\alpha$ with $\beta$.
  Finally, for all $1\leq k\leq 2^n$ we again have $q_{2k,n+1} = q_{k,n}$, and the unique $I'_k = (q_{k,n},x)^\intercal$ with $(I'_k,I_3)\in\I^2_{=\a}(F)$ gives $x=q_{2k+1,n+1}$.
\end{proof}

\begin{cor}
  $\C_\a$ is injective for all $\a\in(0,1)$.
\end{cor}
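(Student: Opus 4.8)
The plan is to deduce injectivity of $\C_\a$ directly from the injectivity of $\I^2_\a$ established in Proposition~\ref{prop:double-interval-injective}, by showing that the collection $\C_\a(F)$ determines $\I^2_\a(F)$ through a fixed operation that does not depend on $F$.

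First I would observe that for any pair $(I_1,I_2)\in\bar U^2$, reading $I_1$ and $I_2$ as closed intervals (with the convention allowing infinite endpoints), the union $I_1\cup I_2$ is a closed, hence Borel, subset of $\R$. By the very definition $\C_\a(F)=\{B\in\B(\R):F(B)\ge\a\}$, we therefore have $F(I_1\cup I_2)\ge\a$ if and only if $I_1\cup I_2\in\C_\a(F)$. Consequently $\I^2_\a(F)=\{(I_1,I_2)\in\bar U^2 : I_1\cup I_2\in\C_\a(F)\}$ for every $F\in\M^c$; that is, $\I^2_\a=\Phi\circ\C_\a$, where $\Phi$ maps a collection $\mathcal C\subseteq\B(\R)$ to $\{(I_1,I_2)\in\bar U^2:I_1\cup I_2\in\mathcal C\}$ and is independent of $F$.

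The conclusion is then immediate: if $F,G\in\M^c$ satisfy $\C_\a(F)=\C_\a(G)$, applying $\Phi$ yields $\I^2_\a(F)=\I^2_\a(G)$, whence $F=G$ by Proposition~\ref{prop:double-interval-injective}. Hence $\C_\a$ is injective on $\M^c$.

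I do not expect a genuine obstacle: the statement is essentially a corollary of Proposition~\ref{prop:double-interval-injective}, and the only point meriting a sentence of care is the observation that finite unions of (possibly unbounded) closed intervals lie in $\B(\R)$, which legitimises reading off membership in $\I^2_\a(F)$ directly from $\C_\a(F)$; this is routine.
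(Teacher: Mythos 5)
Your proof is correct and matches the paper's (implicit) reasoning: the paper states the corollary without proof precisely because the argument is as you describe, namely that $\I^2_\a(F)$ is recoverable from $\C_\a(F)$ by a fixed map, so injectivity transfers from Proposition~\ref{prop:double-interval-injective}. The only cosmetic point is that for pairs with infinite endpoints you should read $I_1\cup I_2$ as $(I_1\cup I_2)\cap\R$ (the paper's convention for evaluating $F$ on elements of $\bar U$), which is still closed in $\R$, so nothing changes.
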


\begin{prop}
  \label{prop:I-wrapped-injective}
  Let $\a\in(0,1)$.  $\I^w_\a$ is injective if and only if $\alpha$ is irrational.
\end{prop}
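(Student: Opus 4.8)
The plan is to show that, for $F\in\M^c$, the collection $\I^w_\a(F)$ is equivalent to knowing the rotation by $\a$ on the circle $\R/\mathbb{Z}$, conjugated through $F$, together with one point of one of its orbits, and that this data determines $F$ precisely when $\a$ is irrational. First I would note that $\I^w_\a(F)$ splits canonically into its single-interval elements, which form $\I_\a(F)\subseteq\bar U$, and its pair-of-intervals elements, the wrapped ones. From $\I_\a(F)$ one recovers the graph of $\Gamma_\a(F)$: for $x\in\R$ with $F(x)<1-\a$ put $\Gamma_\a(F)(x)=\inf\{y\,:\,(x,y)^\intercal\in\I_\a(F)\}$, and continuity of $F$ gives $F(\Gamma_\a(F)(x))=F(x)+\a$. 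From the wrapped elements one recovers, for $y\in\R$ with $F(y)>1-\a$, the quantity $\psi(y):=\inf\{b\,:\,((y,\infty)^\intercal,(-\infty,b)^\intercal)\in\I^w_\a(F)\}$, which satisfies $F(\psi(y))=F(y)-(1-\a)$. Splicing $\Gamma_\a(F)$ and $\psi$ yields a map $\rho$, recoverable from $\I^w_\a(F)$, defined on $\{x\,:\,F(x)\in(0,1)\setminus\{1-\a\}\}$ and satisfying $F(\rho(x))=F(x)+\a\bmod 1$; moreover $F^{-1}(\a)=\inf\{y\,:\,(-\infty,y)^\intercal\in\I_\a(F)\}$ is recoverable and finite.

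\textbf{Irrational case.} Here I would start from $x_1:=F^{-1}(\a)$ and iterate, $x_{k+1}:=\rho(x_k)$. Since $\a$ is irrational, $k\a\bmod1\notin\{0,1-\a\}$ for every $k\ge 1$, so every iterate is defined and finite, with $F(x_k)=k\a\bmod 1$. By density of orbits of irrational rotations (Weyl/Kronecker), the set $Q:=\{k\a\bmod 1\,:\,k\ge 1\}$ is dense in $(0,1)$, and the procedure above recovers $q_\beta(F)=x_k$ for each $\beta=k\a\bmod 1\in Q$ using only $\I^w_\a(F)$. Lemma~\ref{lem:functional-injective} then gives that $\I^w_\a$ is injective on $\M^c$.

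\textbf{Rational case.} Write $\a=p/q$ with integers $0<p<q$. For $a\in(0,1)$ let $F_a\in\M^c$ be the distribution with Lebesgue density $f_a(y)=1-a\cos(2\pi q y)$ on $[0,1]$ (and $0$ elsewhere); since $0<a<1$ this is strictly positive, so $F_a$ is continuous, strictly increasing on $[0,1]$, and the $F_a$ are pairwise distinct. Because $f_a$ is $1/q$-periodic, one computes $F_a(x+1/q)-F_a(x)=1/q$ for all $x\in[0,1-1/q]$, hence $F_a(x+k/q)-F_a(x)=k/q$ whenever $x+k/q\le 1$; combined with $F_a((q-p)/q)=(q-p)/q$ and strict monotonicity, this shows that for every $(x,y)^\intercal\in\bar U$ the membership of $(x,y)^\intercal$ in $\I_\a(F_a)$, and the membership of every wrapped pair in $\I^w_\a(F_a)$, are governed by conditions on the endpoints that do not depend on $a$. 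Hence $\I^w_\a(F_a)$ is one and the same subset for all $a\in(0,1)$, so $\I^w_\a$ fails to be injective. (For $\a=1/n$ this is exactly the construction in the first case of Proposition~\ref{prop:Ialpha-not-injective}; conceptually, $\I^w_\a(F)$ pins down $F$ only up to a homeomorphism of $\R/\mathbb{Z}$ commuting with the rotation $R_\a$, which is an infinite-dimensional group precisely when $\a$ is rational.)

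\textbf{Main difficulty.} The conceptual crux is to recognise that $\I^w_\a(F)$ is exactly the $F$-conjugate of the rotation $R_\a$ together with a single orbit point; once this is isolated, both implications reduce to the classical rigidity/flexibility dichotomy for circle rotations. The technical care lies in the bookkeeping of the domains of $\Gamma_\a(F)$ and $\psi$ at the boundary levels $F(x)\in\{0,1-\a\}$ where the two pieces of $\rho$ meet — so that the iteration never stalls in the irrational case — and, in the rational case, in verifying that \emph{every} defining inequality of $\I^w_\a(F_a)$, not merely the extremal ones, is independent of $a$.
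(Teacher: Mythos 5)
Your proof is correct and follows essentially the same route as the paper: for irrational $\a$ you iterate the map that sends the level $q$ to $(q+\a)\bmod 1$, recoverable from the straight and wrapped intervals of measure exactly $\a$, and invoke equidistribution plus Lemma~\ref{lem:functional-injective}; for rational $\a=p/q$ you perturb the uniform density by a $1/q$-periodic cosine, which is the same one-parameter family the paper uses (the paper writes it as $F_b(y)=y+b\sin(2\pi n y)$, i.e.\ density $1+2\pi nb\cos(2\pi ny)$). The only cosmetic difference is that you phrase the recovery step via an explicit circle-rotation map $\rho$ rather than directly reading off $F^{-1}((q+\a)\%1)$ from an element of $\I^w_{=\a}(F)$, but the bookkeeping of the boundary levels $0$ and $1-\a$, and the verification that both straight and wrapped memberships are $a$-independent, amount to the same computations.
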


\begin{proof}
  First, consider irrational $\a\in(0,1)$.
  As in Proposition~\ref{prop:double-interval-injective}, we will instead show injectivity of   $\I^w_{=\a} \colon \M^c\to 2^{\B(\R)}, \;\; F\mapsto \I_{=\a}(F) \cup \{(I_1,I_2)\in\I^2_{=\a}(F) : I_1=(-\infty, b)^\intercal,\ I_2= (a, \infty)^\intercal, b \leq a\}$, where naturally $\I_{=\a}(F) = \{I\in\I_a(F) : F(I) = \a\}$.

  For any $q\in(0,1)$, suppose we have established $x=F^{-1}(q)$.
  If $q + \a \leq 1$, then by taking $I \in \I^w_{=\a}(F)$ with $I = (x,y)^\intercal$ for some $y\in\R$, we may conclude $F(y) = q + \a$.
  If $q + \a > 1$, then by taking $(I_1,I_2) \in \I^w_{=\a}(F)$ with $I_1 = (x,\infty)^\intercal$, $I_2=(-\infty,y)$ for some $y\in\R$, we may conclude $F(y) = q + \a - 1 \in (0,1)$.
  In both cases, therefore, the values of $F^{-1}(q)$ and $\I^w_{=\a}(F)$ together determine the value of $F^{-1}((q + \a) \% 1)$, where $a \% b := a - b \lfloor a/b \rfloor$ is the real modulus (see Proposition~\ref{prop:Ialpha-not-injective}).

  Let $x_1\in\R$ be defined by $(-\infty,x_1)^\intercal \in \I^w_{=\a}(F)$, so that $F(x_1) = \a$.
  Proceeding as above, for all $k\in\N$, we may determine the values $x_k\in\R$ by $F(x_k) = k\alpha \% 1$.
  As $\a$ is irrational, the set $\{k\a \% 1: k\in\N\}$ is dense in $(0,1)$, concluding the proof.
  
  Now consider rational $\a\in(0,1)$, so that $\alpha=k/n$ for some $k,n\in\N$, $k < n$.
  For any $0 \le b < 1/(2\pi n)$ define the cumulative distribution function $F_b$ by $F_b(y) = y + b \sin(2\pi ny)$ for $y\in [0,1]$.
  Then for all $y\in [0,1-\a]$, we have
  \[ F_b(y+\a) - F_b(y) = \a + b \sin(2\pi n(y+k/n)) - b \sin(2\pi ny) = \a\,.\]
  For $1-\a < y \leq 1$, let $z = y+\a-1$.
  Then
  \begin{align*}
    F_b&(1)-F_b(y) + F_b(z)-F_b(0)
    \\
       &= (1-y) + b \Bigl( \sin(2\pi n) - \sin(2\pi ny) \Bigr)
         + z + b \Bigl( \sin(2\pi n z) - \sin(0) \Bigr)
    \\
       &= (1-y+z) + b \Bigl( \sin(2\pi n z) -\sin(2\pi ny) \Bigr)
    \\
       &= \a + b \Bigl( \sin(2\pi n (1+z)) -\sin(2\pi ny) \Bigr)
    \\
       &= \a + b \Bigl( \sin(2\pi n (y + k/n)) -\sin(2\pi ny) \Bigr)
    \\
       &= \a~.
  \end{align*}
  We conclude, for all $0 \le b < 1/(2\pi n)$, that
  \begin{align*}
    \I^w_{=\a}(F_b)
    =
    &\{(y,y+\alpha)^\intercal:y\in[0,1-\alpha]\} \\
    &\cup \{(y,\alpha)^\intercal:y\leq 0\} \\
    &\cup \{(1-\alpha,y)^\intercal:y\geq 1\} \\
    &\cup \{((y,\infty)^\intercal,(-\infty,y+\alpha-1)^\intercal) : y\in[1-\alpha,1]\}\,.
  \end{align*}
  Since $\I^w_\a$ is in turn determined by $\I^w_{=\a}$, it also fails to be injective for rational $\a$.
\end{proof}

\section{Omitted Technical Proofs}\label{subsec:proofs}

\begin{proof}[Proof of Theorem \ref{thm:intervals_class}(iii)]
Let $\mu$ be positive, $F\in\M_{\a,\text{\rm inc}}$, $A^* = \I_\a(F) \in\mathcal U^*$ and $A\in\mathcal U^*$ such that $A\neq A^*$. Let $\gamma\colon [-\infty, b]\to(-\infty, \infty]$ and $\gamma^*\colon [-\infty, b^*]\to(-\infty, \infty]$, $b,b^*\in\R$, such that $A = \epi \gamma$ and $A^* = \epi \gamma^*$. We first show that $\interior((A\triangle A^*)\cap U) \neq \emptyset$. 
If $b\neq b^*$, it is obvious that $\interior((A\triangle A^*)\cap U) \neq \emptyset$, due to the fact that $\gamma$ and $\gamma^*$ are left-continuous and since $\gamma$ and $\gamma^*$ assume infinity at most at $b$ and $b^*$, respectively. 
Now assume that $b=b^*$.
If $\gamma(-\infty) \neq \gamma^*(-\infty)$, then the right-continuity at $-\infty$ implies that there is some $a\in \R$ such that $\gamma(x)\neq \gamma^*(x)$ for all $x\in(-\infty,a)$. Hence $\interior((A\triangle A^*)\cap U) \neq \emptyset$.
Finally, if there is some $a\in(-\infty, b]$ such that $\gamma(a) \neq \gamma^*(a)$, then the left-continuity implies that there is some $\varepsilon>0$ such that $\gamma \neq \gamma^*$ on $(a-\varepsilon, a]$. Hence, again $\interior((A\triangle A^*)\cap U) \neq \emptyset$. \\
Since $\bar V(x,F)<0$ for $x\notin A^*$, we obtain a strict inequality in \eqref{eq:proof} if $\interior((A\setminus A^*)\cap U)\neq\emptyset$.
Otherwise, consider any $x\in\interior((A^*\setminus A)\cap U)\neq\emptyset$. If $\bar V(x,F)>0$ for all such $x$, we are done. Suppose there is some $(x_1,x_2)^\intercal\in\interior((A^*\setminus A)\cap U)$ such that $\bar V(x_1,x_2,F)=0$. We show that there exists some $\delta>0$ such that $\bar V(\cdot, F)>0$ on the open rectangle $(x_1-\delta, x_1)\times (\gamma^*(x_1), x_2)$. 
First note that there is some $\delta>0$ such that $(x_1-\delta, x_1)\times (\gamma^*(x_1), x_2)\subset \interior((A^*\setminus A)\cap U)$, since the latter set is open and since both $A$ and $A^*$ are upper sets with ordering cone $(-\infty, 0]\times[0,\infty)$. 
Therefore, it is sufficient to show that for all $z_1\in(x_1-\delta, x_1)$ it holds that $\bar V(z_1,\gamma^*(x_1), F) = F([z_1, \gamma^*(x_1)])-\a>0$.
If $\bar V(x_1,x_2,F)=0$ that means that $F((\gamma^*(x_1), x_2])=0$. Therefore, $[\gamma^*(x_1), x_2]\subseteq q_{\a+F(x_1-)}(F)$. Since $F\in\M_{\a,\text{inc}}$, that means that $q_{F(x_1-)}(F) = \{x_1\}$. It is straight forward to see that for all $z_1\in(x_1-\delta, x_1)$ we have that $F(z_1-) < F(x_1-)$. Hence, 
\[
F([z_1, \gamma^*(x_1)])-\a = F(\gamma^*(x_1)) - F(z_1-)-\a > F(\gamma^*(x_1)) - F(x_1-)-\a\ge0\,.
\]
This yields the claim.
\end{proof}

\begin{proof}[Proof of Proposition~\ref{prop:l}]
Part (i) follows from the fact that $V_l$ is a strict identification function for $l$ and from the strict monotonicity and continuity of $F\in \M_l\cap \M_{\text{\rm inc,\,cont}}$.\\
For (ii) suppose $S$ is a strictly $\M_l\cap \M_{\text{\rm inc,\,cont}}$-consistent scoring function for $T_l$ such that $\bar S(\cdot, F)$ is twice differentiable on $\R\times(0,\infty)$ for all $F\in\M_l\cap \M_{\text{\rm inc,\,cont}}$. Assumptions (a), (b) and (c) and a slight adaptation of \citet[Theorem 3.2, Corollary 3.3]{FisslerZiegel2016} imply the existence of a function $h\colon \R\times(0,\infty)\to\R^{2\times2}$ with differentiable components $h_{ij}$ such that 
\[
\nabla_x \bar{S}(x,F)=h(x)\bar{V}(x,F), \quad \text{for all }x\in \R\times(0,\infty), \ F\in \M_l\cap \M_{\text{\rm inc,\,cont}},
\]
where $V$ is the strict $\M_l$-identification function from part (i).
For any $F\in\M_l$ and $x\in\R\times(0,\infty)$, the Hessian $\nabla^2_x \bar{S}(x,F)$ must be symmetric and, for $x=T_l(F)$, it must be positive semidefinite.
We obtain 
\[
\partial_i\bar{S}(x,F)=h_{i1}(x)\bar{V}_l(x_1,F)+h_{i2}(x)(F(x_1+2x_2)-F(x_1)-\alpha)
\]
for $i=1,2$. From the symmetry of the Hessian it follows that
\begin{align*}
&\partial_2 h_{11}(x)\bar{V}_l(x_1,F)+\partial_2 h_{12}(x)(F(x_1+2x_2)-F(x_1)-\alpha)+2h_{12}(x)f(x_1+2x_2)\\
&\quad =\partial_1h_{21}(x)\bar{V}_l(x,F)+h_{21}(x)\bar{V}_l'(x_1,F)+\partial_1 h_{22}(x)(F(x_1+2x_2)-F(x_1)-\alpha)\\
&\qquad+h_{22}(x)(f(x_1+2x_2)-f(x_1)).
\end{align*}
At $x=T_l(F)$ this yields
\begin{multline*}
2h_{12}(T_l(F))f(l(F)+2b_l(F))\\
=h_{21}(T_l(F))\bar{V}_l'(l(F),F)+h_{22}(T_l(F))(f(l(F)+2b_l(F))-f(l(F))).
\end{multline*} 
The existence of $F_1,\, F_2\in\M_l\cap \M_{\text{\rm inc,\,cont}}$ as in assumption (d) for any $(l^*,b^*)^\intercal\in\R\times(0,\infty)$ as well as the surjectivity of $T_l$ implicitly implied via (a) and (b) yield that $h_{22}\equiv0$. 
Furthermore, the existence of $F_3\in\M_l\cap \M_{\text{\rm inc,\,cont}}$ as in assumption (d) for any $(l^*,b^*)^\intercal\in\R\times(0,\infty)$ together with the surjectivity of $T$ implies that $h_{12}\equiv0$. Finally, the existence of $F_4\in\M_l\cap \M_{\text{\rm inc,\,cont}}$ as assumed implies $h_{21}\equiv 0$.\\
Now, for any $x\in\R\times(0,\infty)$ let $F\in\M_l\cap \M_{\text{\rm inc,\,cont}}$ be such that $l(F)\neq x_1$. Since $V_l$ is a strict identification function for $l$, $\bar{V}_l(x_1,F)\neq0$ and we obtain $\partial_2 h_{11}(x)=0$. Therefore there is a function $g\colon\R\to\R$ such that $h_{11}(x)=g(x_1)$ for all $x\in\R\times(0,\infty)$. In summary, we obtain that 
\[
\nabla_x \bar S(x,F) = \begin{pmatrix}
g(x_1) \bar V_l(x_1,F) \\0
\end{pmatrix},
\]
which implies that $\bar{S}(\cdot,F)$ is constant in $x_2$ and $S$ cannot be strictly consistent for $T_l$. 
\end{proof}
\begin{proof}[Proof of Proposition~\ref{prop:m}]
Part (i) follows from the fact that $V_m$ is a strict $\M$-identification function for $m$ and from the strict monotonicity and continuity of $F \in \M\cap \M_{\text{\rm inc,\,cont}}$.\\
For (ii) suppose $S$ is a strictly $\M\cap \M_{\text{\rm inc,\,cont}}$-consistent scoring function for $T_m$ such that $\bar S(\cdot, F)$ is twice differentiable on $\R\times(0,\infty)$ for all $F\in\M\cap \M_{\text{\rm inc,\,cont}}$. Using exactly the same arguments as in the proof of Proposition~\ref{prop:l},
we can derive the existence of a function $h\colon\R\times(0,\infty)\to\R^{2\times 2}$ with differentiable components $h_{ij}$ such that 
\[
\nabla_x \bar{S}(x,F)=h(x)\bar{V}(x,F), \quad \text{for all }x\in \R\times(0,\infty), \ F\in\M\cap \M_{\text{\rm inc,\,cont}},
\]
where $V$ is the strict $\M\cap \M_{\text{\rm inc,\,cont}}$-identification function from part (i). Again, for any $x\in\R\times(0,\infty)$ and $F\in\M\cap \M_{\text{\rm inc,\,cont}}$, the Hessian $\nabla_x^2 \bar S(x,F)$ must be symmetric and, for $x=T_m(F)$, it must be positive semidefinite.
We obtain that 
\[
\partial_i\bar{S}(x,F)=h_{i1}(x)\bar{V}_l(x_1,F)+h_{i2}(x)(F(x_1+x_2)-F(x_1-x_2)-\alpha)
\]
for $i=1,2$. From the symmetry of the Hessian it follows that
\begin{align*}
&\partial_2 h_{11}(x)\bar{V}_m(x_1,F)+\partial_2 h_{12}(x)(F(x_1+x_2)-F(x_1-x_2)-\alpha) \\
&\qquad +h_{12}(x)(f(x_1+x_2)+f(x_1-x_2))\\
&= \partial_1 h_{21}(x)\bar{V}_m(x_1,F)+h_{21}(x)\bar{V}'_m(x_1,F) +\partial_1 h_{22}(x)(F(x_1+x_2)-F(x_1-x_2)-\alpha)\\
&\qquad +h_{22}(x)(f(x_1+x_2)-f(x_1-x_2)).
\end{align*}
At $x=T_m(F)$ this yields
\begin{multline*}
 h_{12}(T_m(F))(f(m(F)+b_m(F))+f(m(F)-b_m(F))) \\
= h_{21}(T_m(F))\bar{V}'_m(m(F),F)+h_{22}(T_m(F))(f(m(F)+b_m(F))-f(m(F)-b_m(F)))
\end{multline*} 
The existence of $F_1,\, F_2$ as in assumption (d) for any $(m^*,b^*)^\intercal\in\R\times(0,\infty)$ and surjectivity of $T_m$ implicitly given via (a) and (b) imply that $h_{22}\equiv0$. Furthermore, the existence of $F_3$ as in assumption (d) together with the surjectivity of $T_m$ implies that $h_{12}\equiv0$. Finally, the existence of $F_4$ as assumed implies $h_{21}\equiv 0$.
The rest of the argument follows as in the proof of Proposition~\ref{prop:l}.
\end{proof}
\begin{proof}[Proof of Lemma~\ref{lem:SI existence}]
 For $\alpha=1$, please note that $SI_1(F) = \{(\essinf(F), \esssup(F))^\intercal\}$. Now let $\a\in(0,1)$.
First note that $SI_\a(F)\neq\emptyset$ if and only if the function $h(x):=\Gamma_\a(F)(x)-x$ attains its infimum over the interval $P:=\{x\in\R\,|\,F(x-)\leq 1-\alpha\}$ where we note that $P$ is closed, bounded from above and unbounded from below.\\
Assume that $\Gamma_\a(F)$ is continuous. Then $h$ is continuous, too. 
Since $\Gamma_\a(F)(x)\geq x$, $m:=\inf_{x\in P}h(x)\geq0$. 
The tightness of $F$ implies that $m<\infty$.
From the definition of the infimum, there is a sequence $(x_n)_{n\in\N}\subseteq P$ with $h(x_n)\to m$. If this sequence is bounded from below, there is a convergent subsequence $(x_{n_k})_{k\in\N}$ with limit $x\in P$ and the continuity of $h$ implies that $h(x_{n_k})\to h(x)$, thus the infimum is attained.  
If $(x_n)_{n\in\N}$ is not bounded from below, there is a divergent subsequence $(x_{n_l})_{l\in\N}$. But then Lemma~\ref{lem:Gamma} (iv) implies that $h(x_{n_l})\to\infty\neq m$ which is a contradiction.\\
Finally assume that $\Gamma_\a(F)$ fails to be right-continuous such that $h$ is also not continuous. $h$ is discontinuous at $x$ if and only if $\Gamma_\a(F)$ jumps at $x$. Jumps of $\Gamma_\a(F)$ can be caused by two situations, namely if $F$ has jumps or if $F$ has flat spots. Both of them can occur at most countably many times (see e.g.\ Theorem 2.1 in \cite{Shorack2006}) which means that $\Gamma_\a(F)$ can have at most countably many jumps. Let $I = \{1, 2, \ldots, n_0\}$ for some $n_0\in\N$ or $I=\N$ be some index set  and let $(a_i)_{i\in I}$ be the collection of points where $\Gamma_\a(F)$ jumps and let $(j_i)_{i\in I}$ be the corresponding jump sizes.
For all $i\in I$ and any $\eps\in(0,j_i/2]$ it holds that $\Gamma_\a(F)(a_i+\eps)\geq\Gamma_\a(F)(a_i)+j_i$ and consequently that $h(a_i+\eps)>h(a_i)$. Thus if $h$ attains its minimum, it is not in any of the intervals $(a_i,a_i+j_i/2)$, $i\in I$. Now define the sequence of functions $h_i$ in the following way: Set $h_0:=h$. For any $i\in I$, if $h_{i-1}$ is continuous at $a_i$, set $h_i=h_{i-1}$, else 
\[
h_i(x)=
\begin{cases}
u_ix+v_i, & x\in(a_i,a_i+j_i/2) \\
h(x), &\text{otherwise},
\end{cases}
\]
where $u_i=2(h(a_i+j_i/2)-h(a_i))/j_i$ and $v_i=h(a_i)-u_ia_i$. It can easily be verified that $h_i$ is continuous on $[a_i,a_i+j_i/2)$ and moreover that $h_i(x)>h_i(a_i)$ for all $x\in[a_i,a_i+j_i/2)$. Therefore if $h_{i-1}$ attains its infimum at $x^*\in\R$, so does $h_i$ and $h_{i-1}(x^*)=h_i(x^*)$. The pointwise limiting function $h^*$ of $(h_i)_{i\in\N}$ is a continuous function that, by an earlier argument, attains its infimum over $P$. By the construction of the functions $h_i$, $h$ also attains its infimum over $P$, at the same point as $h^*$.
\end{proof}

\section*{Acknowledgement}
We would like to express our sincere gratitude to Tilmann Gneiting and Johanna Ziegel for insightful discussions about the topic, to Alexander Jordan who helped to coin the terminology of \emph{exhaustive} versus \emph{selective} elicitability in a joint discussion, to Dario Azzimonti, Zied Ben Bouall\`{e}gue, Seamus Bradley, Jonas Brehmer, Timo Dimitriadis, David Ginsbourger, Claudio Heinrich, Kory Johnson, Ilya Molchanov, Ruodu Wang, and Dominic Wrazidlo for valuable discussions and helpful references, and to Yuan Li for a careful proofreading of an earlier version of this paper.

Tobias Fissler gratefully acknowledges financial support from Imperial College London via his Chapman Fellowship during which main parts of the project have jointly been developed.
This work was supported in part by the U.S.\ National Science Foundation under Grant No.\ 1657598.

\bibliographystyle{apacite}

\end{document}